\newif\iflong
\title{Persistent homology of \replaced{partially ordered spaces}{directed spaces}}
\author[1]{Cameron Calk}\email{cameron.calk@lis-lab.fr}
\affil[1]{Laboratoire d'Informatique et des Syst\`emes, Universit\'e Aix-Marseille, \orgaddress{\city{Marseille}, \postcode{13397 Cedex}, \country{France}}}
\author[2]{Eric Goubault}\email{eric.goubault@polytechnique.edu}
\affil[2]{\orgdiv{LIX}, \orgname{CNRS, Ecole Polytechnique, Institut Polytechnique de Paris}, \orgaddress{\city{Palaiseau}, \postcode{91128 Cedex}, \country{France}}}
\author[3]{Philippe Malbos}\email{malbos@math.univ-lyon1.fr}
\affil[3]{\orgname{Universit\'e Claude Bernard Lyon 1, CNRS, Institut Camille Jordan, UMR5208},
\orgaddress{\city{Villeurbanne}, \postcode{69622 Cedex}, \country{France}}}
\date{}
\newcommand{\auteur}[3]{
\noindent
\begin{minipage}[t]{.45\textwidth}
\begin{flushright}
\textsc{#1} \\
{\footnotesize\textsf{#2}}
\end{flushright} 
\end{minipage}
\qquad
\begin{minipage}[t]{.45\textwidth}
#3
\end{minipage}
}
\def\hhmm{\number\hh:\ifnum\mm<10{}0\fi\number\mm}
\newcommand\map[3]{#1 : #2 \rightarrow #3}
\newtheorem{lemma}{Lemma}
\newtheorem{theorem}{Theorem}
\newtheorem{proposition}{Proposition}
\newtheorem{example}{Example}
\newtheorem{question}{Question}
\def\Re{\mathbb{R}}
\def\K{\mathbb{K}}
\def\k{\mathbb{k}}
\newcommand\ForAuthors[1]
\definecolor{pmcolor}{rgb}{1,0.49,0}
\definecolor{egcolor}{cmyk}{1,0,0,0}
\definecolor{cccolor}{cmyk}{0,1,0,0}
\definecolor{vert}{rgb}{0,0.45,0}
\begin{document}

\abstract{In this work, we explore links between natural homology and persistent homology for the classification of directed spaces. The former is an algebraic invariant of directed spaces, a semantic model of concurrent programs. The latter was developed in the context of topological data analysis, in which topological properties of point-cloud data sets are extracted while eliminating noise. In both approaches, the evolution \added{of} homological properties \replaced{is}{are} tracked through a sequence of inclusions of usual topological spaces. Exploiting this similarity, we show that natural homology may be considered a persistence object, and may be calculated as a colimit of uni-dimensional persistent homologies along traces. \deleted{Finally, we suggest further links and avenues of future work in this direction.}}

\keywords{Concurrent programs, semantic models, directed homotopy, persistent homology.}




\maketitle


\section{Introduction}



\replaced
{
Geometry, algebraic topology and homological algebra have become part of the computer science landscape, thanks to a number of significant applications in recent years. Topological data analysis aims to analyze data sets using techniques from topology \cite{Carlsson09}. One of its main tools is persistent homology, a homological theory that provides efficient algorithms for the analysis of point cloud data which emerged simultaneously in several works in the late 1990s \cite{Barannikov1994,FrosiniLandi99,EdelsbrunnerLetscherZomorodian02}.
}
{Geometry, algebraic topology, and homological algebra have now been in the computer science landscape through many significant applications in recent years.
 In topological data analysis, the shape of point-cloud data can be hinted at through suitable homological invariants, persistence homology (see e.g. \cite{EdelsbrunnerLetscherZomorodian02}, and \cite{Carlsson09} for a survey of the earlier days of persistence).}
\deleted{These invariants capture the essential topological features of the point-cloud data, in that these are independent of the metrics used, are robust to noise and compact in their presentation.}
In the same period, similar ideas appeared in the realm of semantics of programming languages, in particular in concurrency theory \cite{GeomConc,gouthe} and distributed computing~\cite{herlihy1999topological}, see e.g. \cite{marco2013homological,fajstrup16,herlihy} for surveys.

This article makes a formal bridge between these two approaches and illustrates the interest of applying \replaced{methods from persistent homology}{persistent homology methods} to problems such as the classification of potential behaviors in concurrency theory and distributed computing through motivational examples. 

The semantic models that describe possible executions of concurrent programs are based on the notion of \emph{directed space}, which is a topological space $X$ equipped with a \replaced{set}{topological space} $dX$ of \emph{directed paths}, \emph{i.e.} maps from the unit interval to $X$ which enjoy a number of properties, see Section \ref{SS:dspaces} and \cite{grandisbook,fajstrup16}. For the purpose of this article, we will be exemplifying our results on directed spaces which are generated by \emph{compact partially-ordered spaces} \added{(\emph{pospaces})}, which will furthermore be (directed) geometric realizations of finite precubical sets, see e.g. \cite{fajstrup16}. A compact partially-ordered space is a compact topological space $X$, together with a partial ordering $\leq$ which is a closed subset of $X\times X$ \replaced{for}{with} the product topology. The directed paths in $X$ are continuous and increasing maps from the unit interval, with the standard total order, to $X$. \replaced{The}{A} particular class of compact partially-ordered spaces \added{we will be considering here}{} is given by the geometric realization of \replaced{loop-free, non-self linked}{(loop-free)}{} finite precubical sets, in which all $n$-cubes are realized by the unit $n$-cube in $\Re^n$, with the componentwise ordering \added{\cite{Faj06}}{}. 

\replaced{
Let us now consider the problem of classifying directed spaces, i.e. determining when two directed spaces are dihomeomorphic, as defined in Section~\ref{SS:dspaces}, that is homeomorphic as topological spaces with additional conditions concerning the directed structure.}{
Consider now the problem of classifying such directed spaces, meaning determining when two directed spaces are \replaced{``}{"}the same", that is, dihomeomorphic, as defined in Section \ref{SS:dspaces}, meaning homeomorphic as topological spaces plus some extra condition implying preservation of the directed structure.}
From the concurrency theory point of view, having dihomeomorphic directed spaces means essentially having the same coordination between concurrent processes~\cite{fajstrup16}.

As for the classification problem of topological spaces, it is far too hard a problem as such, and we need tractable (directed) topological invariants to give witnesses of the non-existence of dihomeomorphisms. One such invariant, called \emph{natural homology}, is based on homology, as generally done in classical algebraic topology, and has been developed in \cite{naturalhomology}{, see Section~\ref{SS:NaturalHomology}}. The idea is to observe that a dihomeomorphism {$\map{f}{X}{Y}$} induces, for all points $\alpha,\beta$ in $X$, an isomorphism $f^*$ between sub-spaces {$dX(\alpha,\beta)$} of dipaths in $X$ from $\alpha$ to $\beta$ (see Section \ref{SS:dspaces}) \replaced{and}{in with} sub-spaces $dY(f(\alpha),f(\beta))$ of dipaths in $Y$ from $f(\alpha)$ to $f(\beta)$. The natural homology of directed space $X$ is a diagram (or a \replaced{``}{"}natural system" of groups, \cite{BauesWirsching}) combining all (classical) homology groups of {$dX(\alpha,\beta)$} for $\alpha$ and $\beta$ varying over $X$, with induced \replaced{``}{"}extension maps" between them. {When $f$ is a dihomeomorphism, the graph of the homology of $f^*$ is a bisimulation between the {natural homology} of $X$ and the natural homology of $Y$ (Lemma \ref{lem:dihomeobisim})}.

Consider for instance the two compact partially-ordered spaces within $\mathbb{R}^2$ with the componentwise ordering pictured in Figure \ref{fig:0}. They give semantics, for the left one $X$, to program $U.S \mid \mid U.S$, and for the one on the right $Y$, to the program $U.S \mid \mid S.U$, where $S$ stands for a scan operation and $U$ stands for an update operation in shared memory, see e.g. \cite{geomsem}. As topological spaces, $X$ and $Y$ are homeomorphic, homotopically equivalent to a wedge of two circles. 
Still, they are not dihomeomorphic directed spaces. 
Indeed, for $\alpha,\beta$ in $X$, $dX(\alpha,\beta)$ can only be homotopic to a point, two points\added{,} or three points, whereas for $\gamma,\delta$ in $Y$, $dY(\gamma,\delta)$ can only be homotopic to a point, two points\added{,} or four points. 
All these points do indeed correspond to traces of executions of the respective concurrent programs. These traces describe distinct coordination properties between two processes, and may give different outcomes. 
Figure \ref{fig:0} shows the three maximal directed paths up to directed homotopy for the space on the left, and the four maximal ones in the space on the right.

\begin{figure}
\begin{center}
\begin{tikzpicture}[auto,scale = 1]
\draw (1.5,1.5) rectangle (4,4);
\draw [fill = gray!50,draw = gray!50] (2,3) rectangle (2.5,3.5);
\draw [fill = gray!50,draw = gray!50] (3,2) rectangle (3.5,2.5);
\draw [dotted] (2,1.5) to (2,4);
\draw [dotted] (2.5,1.5) to (2.5,4);
\draw [dotted] (3,1.5) to (3,4);
\draw [dotted] (3.5,1.5) to (3.5,4);
\draw [dotted] (1.5,2) to (4,2);
\draw [dotted] (1.5,2.5) to (4,2.5);
\draw [dotted] (1.5,3) to (4,3);
\draw [dotted] (1.5,3.5) to (4,3.5);
\draw (2.25,1.4) to (2.25,1.6);
\draw (3.25,1.4) to (3.25,1.6);
\draw (1.4,2.25) to (1.6,2.25);
\draw (1.4,3.25) to (1.6,3.25);
\node (U11) at (2.25,1.2) {\footnotesize{U}};
\node (S12) at (3.25,1.2) {\footnotesize{S}};
\node (U22) at (1.2,2.25) {\footnotesize{U}};
\node (S21) at (1.2,3.25) {\footnotesize{S}};
\draw [thick] (1.5,1.5) to (4,4);
\draw [thick] (1.5,1.5) .. controls (1.6,4)  .. (4,4);
\draw [thick] (1.5,1.5) .. controls (4,1.6)  .. (4,4);
\end{tikzpicture}
\begin{tikzpicture}
\draw[color = white] (0,-1) rectangle (0.5,-1.1);
\end{tikzpicture}
\qquad
\begin{tikzpicture}[auto,scale = 1]
\draw (1.5,1.5) rectangle (4,4);
\draw [fill = gray!50,draw = gray!50] (3,3) rectangle (3.5,3.5);
\draw [fill = gray!50,draw = gray!50] (2,2) rectangle (2.5,2.5);
\draw [dotted] (2,1.5) to (2,4);
\draw [dotted] (2.5,1.5) to (2.5,4);
\draw [dotted] (3,1.5) to (3,4);
\draw [dotted] (3.5,1.5) to (3.5,4);
\draw [dotted] (1.5,2) to (4,2);
\draw [dotted] (1.5,2.5) to (4,2.5);
\draw [dotted] (1.5,3) to (4,3);
\draw [dotted] (1.5,3.5) to (4,3.5);
\draw (2.25,1.4) to (2.25,1.6);
\draw (3.25,1.4) to (3.25,1.6);
\draw (1.4,2.25) to (1.6,2.25);
\draw (1.4,3.25) to (1.6,3.25);
\node (U11) at (2.25,1.2) {\footnotesize{U}};
\node (S12) at (3.25,1.2) {\footnotesize{S}};
\node (U22) at (1.2,2.25) {\footnotesize{S}};
\node (S21) at (1.2,3.25) {\footnotesize{U}};
\draw [thick] (1.5,1.5) .. controls (1.7,2.75)  .. (2.75,2.75);
\draw [thick] (1.5,1.5) .. controls (2.75,1.7)  .. (2.75,2.75);
\draw [thick] (4,4) .. controls (2.75,3.8)  .. (2.75,2.75);
\draw [thick] (4,4) .. controls (3.8,2.75)  .. (2.75,2.75);
\draw [thick] (1.5,1.5) .. controls (1.6,4)  .. (4,4);
\draw [thick] (1.5,1.5) .. controls (4,1.6)  .. (4,4);
\end{tikzpicture}
\begin{tikzpicture}
\draw[color = white] (0,-1) rectangle (0.5,-1.1);
\end{tikzpicture}
\caption{Two essentially different concurrent programs with homeomorphic state spaces.}
\label{fig:0}       
\end{center}
\end{figure}
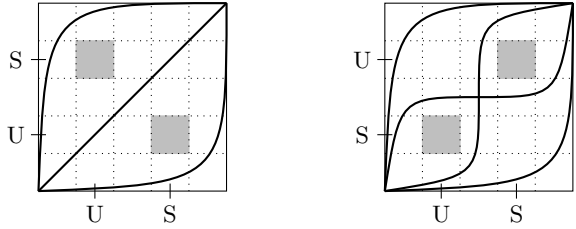

Even more importantly, for any directed path $u$ from $\alpha'$ to $\alpha$ in $X$ and any path $v$ from $\beta$ to $\beta'$ in $X$, we have a continuous map $dX\langle u,v\rangle$ from $dX(\alpha,\beta)$ to $dX(\alpha',\beta')$ by pre-composition by $u$ and post-composition by $v$. 
This is illustrated in a simplified form below, in which the red arrows represent elements of $dX(\alpha,\beta)$:
\vspace{.25cm}
\[
\xymatrix@C=5em{
\alpha'
    \ar@{..>}[r] |-{u}
    \ar@/^1.5pc/[rrrr]
    \ar@/^1.125pc/[rrrr]
    \ar@/^.75pc/[rrrr]
    \ar@/_.75pc/[rrrr]
    \ar@/_1.125pc/[rrrr]
    \ar@/_1.5pc/[rrrr]
&
\alpha
    \ar@[red]@/^.5pc/[rr]
    \ar@[red]@/^.25pc/[rr]
    \ar@[red][rr]
    \ar@[red]@/_.25pc/[rr]
    \ar@[red]@/_.5pc/[rr]
&
&
\beta
 \ar@{..>}[r] |-{v}
&
\beta'
}
\]

\medskip

In other words, each pair of paths $(u:\alpha' \fl \alpha, v : \beta \fl \beta')$ determine a different continuous inclusion from $dX(\alpha,\beta)$ to $dX(\alpha',\beta')$.
We will recap this notion in Section \ref{sec:natural}. 
The study of the shape of such spaces of dipaths, when moving $\alpha$ and $\beta$, will be the essential ingredient of invariants under directed homeomorphisms.

Now, the idea of persistent homology comes from a different line, but we will see that it is based on similar intuitions. 
In the classical``uni-dimensional'' approach to persistent homology of a point cloud in a space $\mathbb{R}^n$, we construct a filtration, i.e. a sequence of inclusions of simplicial sets. Typically, this starts on the point cloud and develops by adding relationships between the initial points resulting in a $\mathbb{R}$-persistence simplicial complex, \added{a sequence of inclusions of simplicial complexes indexed over the reals,} from which we extract a sequence of homological invariants. This sequence will witness the appearances, or births, of homological generators, as well as their disappearances, or deaths, as the filtration develops. The method of filtration depends on the nature and characteristics of the point \replaced{cloud,}{cloud ,} according to which one can consider \v{C}ech, Vietoris-Rips, witness filtrations\replaced{, etc}{...}~\cite{Carlsson09}. In general these filtrations are unidimensional but there exist multidimensional analogues \cite{carzom2}, in which multiple parameters can vary, which we will briefly discuss later on. 







Similarly to persistent homology, there is a natural notion of birth and death of homological generators within natural homology. 
\replaced{This leads us}{It is therefore natural} to study the relationship between natural homology and persistent homology. \replaced{For instance, when $X$ is a cubical complex as in \cite{Eilenberg}}{For $X$ a Euclidean cubical complex}{}, which can be embedded as a sub-directed space of $\Re^n$ for some high enough $n \in \N$, endowed with the component-wise ordering, it is natural to consider multidimensional persistence on $2n$ parameters, the $n$ coordinates of the start point $\alpha$ and the $n$ coordinates of the end point $\beta$ and study the evolution of $dX(\alpha,\beta)$. In Section~\ref{SS:MatchboxExample}, we show that this point of view does not quite work\added{,} using a simple example. 

\bigskip


The objective of this article is to cope with this difficulty, in order to give a meaning to natural homology as persistent homology. The shift of point of view we are going to make is similar to recent approaches in multidimensional persistence \cite{carzom2}. We will look at all unidimensional persistent homologies that are compatible with the structure of a directed space, and glue this information together. In many ways, this resembles ``probing" approaches in multidimensional homology, see e.g. \cite{curry2021directions}. 


In this work, we apply this idea 
to concurrent programs described by directed spaces. 
Here, the data which we want to study via homological persistence are not points, but traces in directed spaces, \ie images of directed paths, which correspond to all observable executions of some concurrent program. 

The directed structure of the space allows for each trace to define several one-dimensional filtrations of the associated trace space. The objective is to amalgamate all of these filtrations to obtain directed \replaced{algebro-topological}{algebraico-topological} properties of the considered directed space.
Contrary to the classical case, wherein the poset-persistence underlying the filtrations is obtained by external methods, the filtrations in the directed framework are inherent to the object of study.

Apart from the theoretical bridge between seemingly different lines of work, this work is a first step toward providing more tractable ways of computing natural homology (such as rank invariants \cite{carzom2}), as well as giving succinct descriptions of the semantics of concurrent and distributed systems, as some sort of barcodes (see e.g. \cite{lesnick2015interactive}), an approach which has proven very practical in different areas of engineering, see e.g. \cite{Carlsson09}. This will be developed elsewhere.


\bigskip

Our first result, Proposition~\ref{P:FactisPos}, provides the basis for this work. Indeed, it states that in the case of a partially ordered space $\Xr$, \deleted{which constitute a prevalent example of directed space,} there exists an order relation on traces which is isomorphic to the factorisation category \added{of} $\Xr$, which we afterwards call the trace poset $\tpos\Xr$ of $\Xr$. The latter is the domain of the natural homology functor, so in particular, Proposition~\ref{P:FactisPos} states that natural homology is a \replaced{persistence}{peristence} vector space.

Theorem~\ref{t:maxchains} characterises maximal chains in the trace poset of a compact partially ordered space, and Proposition~\ref{prop:1} shows that parametrisations of (maximal) traces yield unidimensional persistence modules along (maximal) chains by restriction of the natural homology functor.

Finally, in order to link these unidimensional persistence modules to the full natural homology functor, we prove Propositions~\ref{prop:colimchain} and~\ref{prop:colimmaxchain}, stating that an arbitrary poset may be obtained as the colimit of the diagram of its (maximal) chains. We then apply this to functors whose domain is a poset, showing in Proposition~\ref{Proposition:Colimits} that the colimit construction along chains also applies to such functors.

All of this leads to our main result, Theorem~\ref{thm:1}, stating that natural homology is obtained as the colimit of the unidimensional persistence modules along (maximal) traces, thereby establishing a concrete theoretical link between these two homology theories.


\bigskip

\noindent \textbf{Outline of the article.} 

The first section presents a motivational example \replaced{to introduce}{in order to present} the relationship between persistent homology and natural homology. In particular, we argue that there is no canonical (multidimensional) filtration of the trace spaces of a given directed space which is solely based on end-points of dipaths. Indeed, we show that one must consider filtrations given by inclusions of trace spaces induced by pre- or post-concatenation with dipaths.

Section~\ref{S:Sketch} deals with basic notions of persistent homology, fixing the notations and making the article self-contained. We recall the definitions of persistence objects and persistent homology, as well as an algorithm for computing persistent homology, more details on which can be found in~\cite{Carlsson09,EdelsbrunnerHarer2008,EdelsbrunnerHarer2010}.

Section \ref{sec:natural} recalls the main definitions of directed topology and homological algebra used in this work. In Sections \ref{SS:dspaces} and~\ref{sec:tracecat}, we recall the structure of directed space, while the definition of natural homology is recalled in \ref{SS:NaturalHomology}. Before moving on to the main results of the article, we recall in \replaced{Section~\ref{sec:bisimulation}}{Section\ref{sec:bisimulation}} the definition of bisimulation, which constitutes the appropriate notion of equivalence of natural homology modules.

The first results of the paper are shown in Section~\ref{sec:persdir}. First, we show by way of Proposition~\ref{P:FactisPos} that in the case of a partially ordered space $\Xr$, the domain of the associated natural homology functor is in fact a poset $\tpos \Xr$, thus allowing us to consider it as a persistence object. In Section \ref{sec:persistenttrace}, we define a unidimensional persistence module along traces in a pospace, before proving Theorem~\ref{t:maxchains} and Proposition~\ref{prop:1}, relating (maximal) chains in the poset $\tpos\Xr$ to (maximal) traces in $\Xr$.
These concepts are illustrated in Section \ref{ex:persmatchbox} on the motivational example from Section~\ref{S:Sketch}, the matchbox, for which full calculations are made. The relation to the natural homology of the matchbox is described in Section~\ref{sec:nathommatchbox}.

In Section~\ref{sec:simulpers}, we show how colimit constructions permit us to glue together the unidimensional persistence modules obtained along traces in order to recover the entire natural homology diagram of a given pospace. First, in Section~\ref{ss:colimitschains}, we recall folkloric results about posets which provide various ways of reconstructing a poset as a colimit of its (maximal) chains. In Section~\ref{sec:applidiag}, we show via Proposition~\ref{Proposition:Colimits} that these constructions carry over to functors whose domains are posets. Finally, putting this together with results from Section~\ref{sec:persdir}, we obtain the main result of this article in Section~\ref{sec:nathomupset}, namely Theorem~\ref{thm:1}, stating that the natural homology of a pospace, or certain subdiagrams thereof, can be obtained from the uni-dimensional persistence modules along traces.

Finally, Section~\ref{sec:remarks} finishes up by giving hints about extensions of these results, further linking natural homology with persistence when the considered partially ordered spaces are equipped with some metric structure in Section~\ref{ss:MetricStructure}, and briefly touching on algorithmic considerations in Section~\ref{SS:algo}. 

\section{A motivational example}
\label{SS:MatchboxExample}

To give a glimpse of the intimate relationship between multidimensional persistence
and natural homology, let us describe our construction on the following example, namely Fahrenberg's matchbox \cite{fahdihom}:

\medskip

\begin{center}
\begin{tikzpicture}[scale=1.5,tdplot_main_coords]
\begin{scope}[very thick, every node/.style={sloped,allow upside down}]
    \coordinate (O) at (0,0,0);
    \tdplotsetcoord{P}{1.414213}{54.68636}{45}

    \draw[fill=gray!50,fill opacity=0.5] (O) -- (Py) -- (Pyz) -- (Pz) -- cycle;
    \draw[-,thick,fill=gray!50,fill opacity=0.5] (O) -- (Py);
    \draw[-,thick,fill=gray!50,fill opacity=0.5] (Py) -- (Pyz);
    \draw[-,thick,fill=gray!50,fill opacity=0.5] (O) -- (Pz);
    \draw[-,thick,fill=gray!50,fill opacity=0.5] (Pz) -- (Pyz);

    \draw[fill=yellow,fill opacity=0.5] (O) -- (Px) -- (Pxz) -- (Pz) -- cycle;
    \draw[-,thick,fill=yellow,fill opacity=0.5] (O) -- (Px);
    \draw[-,thick,fill=yellow,fill opacity=0.5] (Px) -- (Pxz);
    \draw[-,thick,fill=yellow,fill opacity=0.5] (Pz) -- (Pxz);
    \draw[-,thick,fill=yellow,fill opacity=0.5] (O) -- (Pz);

    \draw[fill=green,fill opacity=0.5] (Pz) -- (Pyz) -- (P) -- (Pxz) -- cycle;
    \draw[-,thick,fill=green,fill opacity=0.5] (Pz) -- (Pyz); 
    \draw[-,thick,fill=green,fill opacity=0.5] (Pyz) -- (P); 
    \draw[-,thick,fill=green,fill opacity=0.5] (Pz) -- (Pxz);
    \draw[-,thick,fill=green,fill opacity=0.5] (Pxz) -- (P);

    \draw[fill=red,fill opacity=0.5] (Px) -- (Pxy) -- (P) -- (Pxz) -- cycle;
    \draw[-,thick,fill=red,fill opacity=0.5] (Px) -- (Pxy);
    \draw[-,thick,fill=red,fill opacity=0.5] (Pxy) -- (P);
    \draw[-,thick,fill=red,fill opacity=0.5] (Pxz) -- (P);
    \draw[-,thick,fill=red,fill opacity=0.5] (Px) -- (Pxz);

    \draw[fill=magenta,fill opacity=0.5] (Py) -- (Pxy) -- (P) -- (Pyz) -- cycle;
    \draw[-,thick,fill=magenta,fill opacity=0.5] (Py) -- (Pxy);
    \draw[-,thick,fill=magenta,fill opacity=0.5] (Pxy) -- (P);
    \draw[-,thick,fill=magenta,fill opacity=0.5] (Py) -- (Pyz);
    \draw[-,thick,fill=magenta,fill opacity=0.5] (Pyz) -- (P);

    \filldraw (O) circle [radius=1.75pt];
    \draw (P) circle [radius=1.75pt];
\end{scope}
  \end{tikzpicture}
  \end{center}
\medskip
  
The faces of the unit cube $[0,1]\times [0,1] \times [0,1]$, except the bottom face, constitute the associated cubical complex $K$, \ie there are 5 squares glued together.
Dipaths in the matchbox are given by continuous maps which are increasing in each coordinate, the order being given by considering the disk in the figure as the origin, and the empty circle as $(1,1,1)$. To simplify, in the rest of this example we will only consider so-called combinatorial dipaths, \ie those which follow the black lines in the above diagram. 
A \emph{trace} is the equivalence class $\overline p$ of a
dipath $p$ modulo reparametrisation equivalence as defined in Section~\ref{SS:dspaces}, 
and the set of such equivalence classes can be given the structure of a topological space $\trace K a b$ for all start (resp. end) points $a$ (resp. $b$), homotopic to a CW-complex for a large class of geometric realizations of pre-cubical sets.

Using Ziemia\'nski's construction~\cite{ziemianski}, the simplicial set corresponding
to its trace space from beginning to end is as follows 
\begin{equation}
\label{E:SimplicialSet}
\begin{aligned}
\begin{tikzpicture}[auto,line/.style={draw,thick,-latex',shorten >=2pt},cloudgrey/.style={draw=black,thick,circle,fill={rgb:black,1;white,2},minimum height=1em},cloud/.style={draw=black,thick,circle,fill=white,minimum height=1em}]
\matrix[column sep=6mm,row sep=1mm]
{
& \node [cloud] (gamma) {$\gamma$}; & \node [cloud] (delta) {$\delta$}; & \\
\node [cloud] (beta) {$\beta$}; & & & \node [cloud] (epsilon) {$\epsilon$}; \\
& \node [cloud] (alpha) {$\alpha$}; & \node [cloud] (zeta) {$\zeta$}; & \\
};
\draw (gamma) -- node[above] {$A$} (delta);
\draw (beta) -- node[above] {$B$\;\;} (gamma);
\draw (delta) -- node[above] {\;\;$C$} (epsilon);
\draw (alpha) -- node[below] {$E$\;\;} (beta);
\draw (zeta) -- node[below] {\;\;$D$} (epsilon);
\end{tikzpicture}
\end{aligned}
\end{equation}
where the edges $A$, $B$, $C$, $D$ and $E$ correspond to the following 
five 2-dimensional cubical paths:

\medskip

\begin{center}
\begin{tabular}{ccccc}
  \begin{tikzpicture}[scale=1.5,tdplot_main_coords]
    \coordinate (O) at (0,0,0);
    \tdplotsetcoord{P}{1.414213}{54.68636}{45}
    \draw[->,thick,fill=green,fill opacity=0.3] (Pz) -- (Pyz) -- (P) -- (Pxz) -- cycle;
\draw [->,thick] (O) -- (Pz);
    \draw[->,thick,fill=green,fill opacity=0.3] (Pz) -- (Pyz); 
    \draw[->,thick,fill=green,fill opacity=0.3] (Pyz) -- (P); 
    \draw[->,thick,fill=green,fill opacity=0.3] (Pz) -- (Pxz);
    \draw[->,thick,fill=green,fill opacity=0.3] (Pxz) -- (P);

    \draw[dashed,fill=gray!50,fill opacity=0.1] (O) -- (Py) -- (Pyz) -- (Pz) -- cycle;
    \draw[dashed,fill=yellow,fill opacity=0.1] (O) -- (Px) -- (Pxz) -- (Pz) -- cycle;
    \draw[dashed,fill=green,fill opacity=0.1] (Pz) -- (Pyz) -- (P) -- (Pxz) -- cycle;
    \draw[dashed,fill=red,fill opacity=0.1] (Px) -- (Pxy) -- (P) -- (Pxz) -- cycle;
    \draw[dashed,fill=magenta,fill opacity=0.1] (Py) -- (Pxy) -- (P) -- (Pyz) -- cycle;
  \end{tikzpicture}
&
  \begin{tikzpicture}[scale=1.5,tdplot_main_coords]
    \coordinate (O) at (0,0,0);
    \tdplotsetcoord{P}{1.414213}{54.68636}{45}

    \draw[fill=gray!50,fill opacity=0.3] (O) -- (Py) -- (Pyz) -- (Pz) -- cycle;
    \draw[->,thick,fill=gray!50,fill opacity=0.3] (O) -- (Py);
    \draw[->,thick,fill=gray!50,fill opacity=0.3] (Py) -- (Pyz);
    \draw[->,thick,fill=gray!50,fill opacity=0.3] (O) -- (Pz);
    \draw[->,thick,fill=gray!50,fill opacity=0.3] (Pz) -- (Pyz);
    \draw[->,thick,fill=gray!50,fill opacity=0.3] (Pyz) -- (P);

    \draw[dashed,fill=gray!50,fill opacity=0.1] (O) -- (Py) -- (Pyz) -- (Pz) -- cycle;
    \draw[dashed,fill=yellow,fill opacity=0.1] (O) -- (Px) -- (Pxz) -- (Pz) -- cycle;
    \draw[dashed,fill=green,fill opacity=0.1] (Pz) -- (Pyz) -- (P) -- (Pxz) -- cycle;
    \draw[dashed,fill=red,fill opacity=0.1] (Px) -- (Pxy) -- (P) -- (Pxz) -- cycle;
    \draw[dashed,fill=magenta,fill opacity=0.1] (Py) -- (Pxy) -- (P) -- (Pyz) -- cycle;
\end{tikzpicture}
&
  \begin{tikzpicture}[scale=1.5,tdplot_main_coords]
    \coordinate (O) at (0,0,0);
    \tdplotsetcoord{P}{1.414213}{54.68636}{45}

    \draw[fill=yellow,fill opacity=0.3] (O) -- (Px) -- (Pxz) -- (Pz) -- cycle;
    \draw[->,thick,fill=yellow,fill opacity=0.3] (O) -- (Px);
    \draw[->,thick,fill=yellow,fill opacity=0.3] (Px) -- (Pxz);
    \draw[->,thick,fill=yellow,fill opacity=0.3] (Pz) -- (Pxz);
    \draw[->,thick,fill=yellow,fill opacity=0.3] (O) -- (Pz);
    \draw[->,thick,fill=yellow,fill opacity=0.3] (Pxz) -- (P);

    \draw[dashed,fill=gray!50,fill opacity=0.1] (O) -- (Py) -- (Pyz) -- (Pz) -- cycle;
    \draw[dashed,fill=yellow,fill opacity=0.1] (O) -- (Px) -- (Pxz) -- (Pz) -- cycle;
    \draw[dashed,fill=green,fill opacity=0.1] (Pz) -- (Pyz) -- (P) -- (Pxz) -- cycle;
    \draw[dashed,fill=red,fill opacity=0.1] (Px) -- (Pxy) -- (P) -- (Pxz) -- cycle;
    \draw[dashed,fill=magenta,fill opacity=0.1] (Py) -- (Pxy) -- (P) -- (Pyz) -- cycle;
\end{tikzpicture}
& 
  \begin{tikzpicture}[scale=1.5,tdplot_main_coords]
    \coordinate (O) at (0,0,0);
    \tdplotsetcoord{P}{1.414213}{54.68636}{45}

    \draw[fill=red,fill opacity=0.3] (Px) -- (Pxy) -- (P) -- (Pxz) -- cycle;
    \draw[->,thick,fill=red,fill opacity=0.3] (O) -- (Px);
    \draw[->,thick,fill=red,fill opacity=0.3] (Px) -- (Pxy);
    \draw[->,thick,fill=red,fill opacity=0.3] (Pxy) -- (P);
    \draw[->,thick,fill=red,fill opacity=0.3] (Pxz) -- (P);
    \draw[->,thick,fill=red,fill opacity=0.3] (Px) -- (Pxz);

    \draw[dashed,fill=gray!50,fill opacity=0.1] (O) -- (Py) -- (Pyz) -- (Pz) -- cycle;
    \draw[dashed,fill=yellow,fill opacity=0.1] (O) -- (Px) -- (Pxz) -- (Pz) -- cycle;
    \draw[dashed,fill=green,fill opacity=0.1] (Pz) -- (Pyz) -- (P) -- (Pxz) -- cycle;
    \draw[dashed,fill=red,fill opacity=0.1] (Px) -- (Pxy) -- (P) -- (Pxz) -- cycle;
    \draw[dashed,fill=magenta,fill opacity=0.1] (Py) -- (Pxy) -- (P) -- (Pyz) -- cycle;
\end{tikzpicture}
&
  \begin{tikzpicture}[scale=1.5,tdplot_main_coords]
    \coordinate (O) at (0,0,0);
    \tdplotsetcoord{P}{1.414213}{54.68636}{45}

    \draw[fill=magenta,fill opacity=0.3] (Py) -- (Pxy) -- (P) -- (Pyz) -- cycle;
    \draw[->,thick,fill=magenta,fill opacity=0.3] (Py) -- (Pxy);
    \draw[->,thick,fill=magenta,fill opacity=0.3] (Pxy) -- (P);
    \draw[->,thick,fill=magenta,fill opacity=0.3] (Py) -- (Pyz);
    \draw[->,thick,fill=magenta,fill opacity=0.3] (Pyz) -- (P);
    \draw[->,thick,fill=magenta,fill opacity=0.3] (O) -- (Py);

    \draw[dashed,fill=gray!50,fill opacity=0.1] (O) -- (Py) -- (Pyz) -- (Pz) -- cycle;
    \draw[dashed,fill=yellow,fill opacity=0.1] (O) -- (Px) -- (Pxz) -- (Pz) -- cycle;
    \draw[dashed,fill=green,fill opacity=0.1] (Pz) -- (Pyz) -- (P) -- (Pxz) -- cycle;
    \draw[dashed,fill=red,fill opacity=0.1] (Px) -- (Pxy) -- (P) -- (Pxz) -- cycle;
    \draw[dashed,fill=magenta,fill opacity=0.1] (Py) -- (Pxy) -- (P) -- (Pyz) -- cycle;
\end{tikzpicture} 
\\
A & B & C & D & E \\
\end{tabular}
\end{center}
and the vertices $\alpha$, $\beta$, $\gamma$, $\delta$, $\epsilon$ and $\zeta$ correspond to the following six combinatorial dipaths:
\begin{equation}
\label{E:SimplicialSet2}
\begin{tabular}{cccccc}
  \begin{tikzpicture}[scale=1.2,tdplot_main_coords]
    \coordinate (O) at (0,0,0);
    \tdplotsetcoord{P}{1.414213}{54.68636}{45}

    \draw[->,thick,fill=gray!50,fill opacity=0.3] (O) -- (Px);
    \draw[->,thick,fill=gray!50,fill opacity=0.3] (Px) -- (Pxy);
    \draw[->,thick,fill=gray!50,fill opacity=0.3] (Pxy) -- (P);

    \draw[dashed,fill=gray!50,fill opacity=0.1] (O) -- (Py) -- (Pyz) -- (Pz) -- cycle;
    \draw[dashed,fill=yellow,fill opacity=0.1] (O) -- (Px) -- (Pxz) -- (Pz) -- cycle;
    \draw[dashed,fill=green,fill opacity=0.1] (Pz) -- (Pyz) -- (P) -- (Pxz) -- cycle;
    \draw[dashed,fill=red,fill opacity=0.1] (Px) -- (Pxy) -- (P) -- (Pxz) -- cycle;
    \draw[dashed,fill=magenta,fill opacity=0.1] (Py) -- (Pxy) -- (P) -- (Pyz) -- cycle;
\end{tikzpicture}
& 
  \begin{tikzpicture}[scale=1.2,tdplot_main_coords]
    \coordinate (O) at (0,0,0);
    \tdplotsetcoord{P}{1.414213}{54.68636}{45}

    \draw[->,thick,fill=gray!50,fill opacity=0.3] (O) -- (Px);
    \draw[->,thick,fill=gray!50,fill opacity=0.3] (Px) -- (Pxz);
    \draw[->,thick,fill=gray!50,fill opacity=0.3] (Pxz) -- (P);

    \draw[dashed,fill=gray!50,fill opacity=0.1] (O) -- (Py) -- (Pyz) -- (Pz) -- cycle;
    \draw[dashed,fill=yellow,fill opacity=0.1] (O) -- (Px) -- (Pxz) -- (Pz) -- cycle;
    \draw[dashed,fill=green,fill opacity=0.1] (Pz) -- (Pyz) -- (P) -- (Pxz) -- cycle;
    \draw[dashed,fill=red,fill opacity=0.1] (Px) -- (Pxy) -- (P) -- (Pxz) -- cycle;
    \draw[dashed,fill=magenta,fill opacity=0.1] (Py) -- (Pxy) -- (P) -- (Pyz) -- cycle;
\end{tikzpicture}
&
  \begin{tikzpicture}[scale=1.2,tdplot_main_coords]
    \coordinate (O) at (0,0,0);
    \tdplotsetcoord{P}{1.414213}{54.68636}{45}

    \draw[->,thick,fill=gray!50,fill opacity=0.3] (O) -- (Py);
    \draw[->,thick,fill=gray!50,fill opacity=0.3] (Py) -- (Pxy);
    \draw[->,thick,fill=gray!50,fill opacity=0.3] (Pxy) -- (P);

    \draw[dashed,fill=gray!50,fill opacity=0.1] (O) -- (Py) -- (Pyz) -- (Pz) -- cycle;
    \draw[dashed,fill=yellow,fill opacity=0.1] (O) -- (Px) -- (Pxz) -- (Pz) -- cycle;
    \draw[dashed,fill=green,fill opacity=0.1] (Pz) -- (Pyz) -- (P) -- (Pxz) -- cycle;
    \draw[dashed,fill=red,fill opacity=0.1] (Px) -- (Pxy) -- (P) -- (Pxz) -- cycle;
    \draw[dashed,fill=magenta,fill opacity=0.1] (Py) -- (Pxy) -- (P) -- (Pyz) -- cycle;
\end{tikzpicture}
&
  \begin{tikzpicture}[scale=1.2,tdplot_main_coords]
    \coordinate (O) at (0,0,0);
    \tdplotsetcoord{P}{1.414213}{54.68636}{45}

    \draw[->,thick,fill=gray!50,fill opacity=0.3] (O) -- (Py);
    \draw[->,thick,fill=gray!50,fill opacity=0.3] (Py) -- (Pyz);
    \draw[->,thick,fill=gray!50,fill opacity=0.3] (Pyz) -- (P);

    \draw[dashed,fill=gray!50,fill opacity=0.1] (O) -- (Py) -- (Pyz) -- (Pz) -- cycle;
    \draw[dashed,fill=yellow,fill opacity=0.1] (O) -- (Px) -- (Pxz) -- (Pz) -- cycle;
    \draw[dashed,fill=green,fill opacity=0.1] (Pz) -- (Pyz) -- (P) -- (Pxz) -- cycle;
    \draw[dashed,fill=red,fill opacity=0.1] (Px) -- (Pxy) -- (P) -- (Pxz) -- cycle;
    \draw[dashed,fill=magenta,fill opacity=0.1] (Py) -- (Pxy) -- (P) -- (Pyz) -- cycle;
\end{tikzpicture}
&
  \begin{tikzpicture}[scale=1.2,tdplot_main_coords]
    \coordinate (O) at (0,0,0);
    \tdplotsetcoord{P}{1.414213}{54.68636}{45}

    \draw[->,thick,fill=gray!50,fill opacity=0.3] (O) -- (Pz);
    \draw[->,thick,fill=gray!50,fill opacity=0.3] (Pz) -- (Pxz);
    \draw[->,thick,fill=gray!50,fill opacity=0.3] (Pxz) -- (P);

    \draw[dashed,fill=gray!50,fill opacity=0.1] (O) -- (Py) -- (Pyz) -- (Pz) -- cycle;
    \draw[dashed,fill=yellow,fill opacity=0.1] (O) -- (Px) -- (Pxz) -- (Pz) -- cycle;
    \draw[dashed,fill=green,fill opacity=0.1] (Pz) -- (Pyz) -- (P) -- (Pxz) -- cycle;
    \draw[dashed,fill=red,fill opacity=0.1] (Px) -- (Pxy) -- (P) -- (Pxz) -- cycle;
    \draw[dashed,fill=magenta,fill opacity=0.1] (Py) -- (Pxy) -- (P) -- (Pyz) -- cycle;
\end{tikzpicture}
&
  \begin{tikzpicture}[scale=1.2,tdplot_main_coords]
    \coordinate (O) at (0,0,0);
    \tdplotsetcoord{P}{1.414213}{54.68636}{45}

    \draw[->,thick,fill=gray!50,fill opacity=0.3] (O) -- (Pz);
    \draw[->,thick,fill=gray!50,fill opacity=0.3] (Pz) -- (Pyz);
    \draw[->,thick,fill=gray!50,fill opacity=0.3] (Pyz) -- (P);

    \draw[dashed,fill=gray!50,fill opacity=0.1] (O) -- (Py) -- (Pyz) -- (Pz) -- cycle;
    \draw[dashed,fill=yellow,fill opacity=0.1] (O) -- (Px) -- (Pxz) -- (Pz) -- cycle;
    \draw[dashed,fill=green,fill opacity=0.1] (Pz) -- (Pyz) -- (P) -- (Pxz) -- cycle;
    \draw[dashed,fill=red,fill opacity=0.1] (Px) -- (Pxy) -- (P) -- (Pxz) -- cycle;
    \draw[dashed,fill=magenta,fill opacity=0.1] (Py) -- (Pxy) -- (P) -- (Pyz) -- cycle;
\end{tikzpicture}
\\
$\zeta$ & $\epsilon$ & $\alpha$ & $\beta$ & $\delta$ & $\gamma$ 
\end{tabular}
\end{equation}

\medskip

Here $\beta$ is the geometric intersection
of $B$ with $E$, $\gamma$ is the geometric intersection of $B$ with $A$ and so forth. This leads to the simplicial set pictured in~\eqref{E:SimplicialSet}. 

In order to apply persistent homology, we need to obtain a (multidimensional) filtration of the simplicial set pictured in (\ref{E:SimplicialSet}) from this directed space, \ie a map from some poset $P$ such as $\N^2$ to the category of simplicial sets. One could think that such a filtration could be obtained by moving the end-points $a$ and $b$ associated to a trace space $\trace K a b$. However, as we illustrate below, there is no canonical way of obtaining such a filtration in general; we must use extensions along traces to define inclusion maps.

There are maps from $\trace K a b$ to $\trace K{a'}{b'}$, for $a \leq a'$
and $b' \leq b$ that act as restriction maps~: they just ``cut'' the combinatorial
dipaths so as to only keep the parts (if any) that go from $a'$ to $b'$. Hence, we get a decreasing sequence of simplicial sets as soon
as any of the coordinates of $a$ increase or any of the coordinates of $b$ decrease. 
Below, we have represented the part of the multidimensional filtration generated, for the vertical coordinate of $b$ (the end point) and  of $a$ (the starting point); recall also that the 5 squares are here unit squares and the lower coordinates are 0, upper ones are 1. 
In this filtration, the restriction maps acting on combinatorial dipaths should correspond to inclusion maps from bottom to top, and from left to right, of simplicial sets representing the corresponding trace spaces. 

For instance, moving the end point $b$ from vertical coordinate $1$ to $0$ while keeping the vertical coordinate of $a$ at 0 (right column in the table below), the only 1-dimensional paths going through coordinate $0$ for $b$ are $\alpha$ and $\zeta$, hence all other vertices (and edges) have to disappear. This induces the upwards inclusion map from the two-point simplicial set ($\alpha$ and $\zeta$) into the connected simplicial set above~: $H_0$ of these simplicial sets goes from $\Z^2$ to $\Z$, ``killing'' one component when extending paths to reach the end point of the matchbox. This corresponds, in the natural homology diagram $\overrightarrow H_1 (K)$, to part of the diagram being a projection map from $\Z^2$ to $\Z$ when moving $b$ to the endpoint of the matchbox, while keeping the starting point fixed at the initial vertex.

\medskip

\begin{center}
\begin{tabular}{|c|c|c|}
\hline
 & a=1 & a=0 \\ 
\hline
b=1 & \begin{tikzpicture}[auto,line/.style={draw,thick,-latex',shorten >=2pt},cloudgrey/.style={draw=black,thick,circle,fill={rgb:black,1;white,2},minimum height=1em},cloud/.style={draw=black,thick,circle,fill=white,minimum height=1em},clouddashed/.style={dashed,draw=black,thick,circle,fill=white,minimum height=1em}]
\matrix[column sep=4mm,row sep=2mm]
{
& \node [cloud] (gamma) {$\gamma$}; & \node [cloud] (delta) {$\delta$}; & \\
\node [clouddashed] (beta) {}; & & & \node [clouddashed] (epsilon) {}; \\
& \node [clouddashed] (alpha) {}; & \node [clouddashed] (zeta) {}; & \\
};
\draw (gamma) -- node[above] {$A$} (delta);
\draw [dashed] (beta) -- node[above,left] {$B$} (gamma);
\draw [dashed] (delta) -- node[above,right] {$C$} (epsilon);
\draw [dashed] (alpha) -- node[below,left] {$E$} (beta);
\draw [dashed] (zeta) -- node[below,right] {$D$} (epsilon);
\end{tikzpicture}
& 
\begin{tikzpicture}[auto,line/.style={draw,thick,-latex',shorten >=2pt},cloudgrey/.style={draw=black,thick,circle,fill={rgb:black,1;white,2},minimum height=1em},cloud/.style={draw=black,thick,circle,fill=white,minimum height=1em}]
\matrix[column sep=4mm,row sep=2mm]
{
& \node [cloud] (gamma) {$\gamma$}; & \node [cloud] (delta) {$\delta$}; & \\
\node [cloud] (beta) {$\beta$}; & & & \node [cloud] (epsilon) {$\epsilon$}; \\
& \node [cloud] (alpha) {$\alpha$}; & \node [cloud] (zeta) {$\zeta$}; & \\
};
\draw (gamma) -- node[above] {$A$} (delta);
\draw (beta) -- node[above,left] {$B$} (gamma);
\draw (delta) -- node[above,right] {$C$} (epsilon);
\draw (alpha) -- node[below,left] {$E$} (beta);
\draw (zeta) -- node[below,right] {$D$} (epsilon);
\end{tikzpicture}
\\
\hline 
b=0 & $\emptyset$
& 
\begin{tikzpicture}[auto,line/.style={draw,thick,-latex',shorten >=2pt},cloudgrey/.style={draw=black,thick,circle,fill={rgb:black,1;white,2},minimum height=1em},cloud/.style={draw=black,thick,circle,fill=white,minimum height=1em},clouddashed/.style={dashed,draw=black,thick,circle,fill=white,minimum height=1em}]
\matrix[column sep=4mm,row sep=2mm]
{
& \node [clouddashed] (gamma) {$\gamma$}; & \node [clouddashed] (delta) {$\delta$}; & \\
\node [clouddashed] (beta) {$\beta$}; & & & \node [clouddashed] (epsilon) {$\epsilon$}; \\
& \node [cloud] (alpha) {$\alpha$}; & \node [cloud] (zeta) {$\zeta$}; & \\
};
\draw [dashed] (gamma) -- node[above] {$A$} (delta);
\draw [dashed] (beta) -- node[above,left] {$B$} (gamma);
\draw [dashed] (delta) -- node[above,right] {$C$} (epsilon);
\draw [dashed] (alpha) -- node[below,left] {$E$} (beta);
\draw [dashed] (zeta) -- node[below,right] {$D$} (epsilon);
\end{tikzpicture}
\\
\hline
\end{tabular}
\end{center}

\medskip

The reason that we obtain an inclusion map from bottom to top in this case is because there is a unique map from the point $(1,1,0)$ to the point $(1,1,1)$. When there is a choice between maps, we no longer obtain a canonical inclusion map. Indeed, consider the case in which $a=(0,0,0)$, the initial point, $b' = (1,1,0)$ and $b= (1,1,1)$, the terminal point. There are two extension maps from $\trace K {b'} b$ to $\trace K a b$, one by precomposition by the trace of $\zeta$, and one by precomposition by the trace of $\alpha$. These will produce different homology maps once the invariant is applied. 

Therefore there is no canonical multidimensional filtration of the trace space which depends only on start and end points. More generally, it is easily seen that such a multidimensional filtration for studying a directed space $X$ would exist only if we had a way to associate in a continuous manner, to each pair of points $\alpha$ and $\beta$, a directed path going from $\alpha$ to $\beta$. It is well-known that indeed, such a continuous map will only exist if $X$ is contractible in a directed manner, \ie is trivial, see e.g. \cite{dircompl}. 

{The objective of this article is to cope with this difficulty, in order to give a meaning to natural homology as persistence homology. The shift of point of view we are going to make is similar to recent approaches in multidimensional persistence. We are going to look at all unidimensional persistence homologies that are compatible with our directed space structure, and glue this information together.} \replaced{As exemplified above, in general there exist several extensions between the same trace spaces which induce different homology maps. For this reason, filtrations parametrised by beginning- and end-points do not suffice to capture all of the directed homological information of a given space; we therefore choose to consider filtrations generated by the extensions themselves.}{The non-canonicity of multidimensional filtrations, due to the existence in general of multiple extensions between trace spaces, we will no longer be considering filtrations parametrised by beginning- and end-points, but which are generated by extensions along a given trace.} These have a natural interpretation as uni-dimensional persistence modules and are closely related to the structure of the directed space in question.

\section{A sketch of persistence}
\label{S:Sketch}

In this section we recall the definitions of persistent homology using the notion of persistence module from~\cite{Zomorodian:2005aa}. We \replaced{also recall}{recall also} an algorithm for computing persistent homology based on a structure theorem for persistence vector spaces~\cite{Zomorodian:2005aa}.  We refer the reader to \cite{Carlsson09,EdelsbrunnerHarer2008,EdelsbrunnerHarer2010} for more detailed accounts of persistent homology.

\subsection{Persistence complexes}

Given a poset $P$, considered as a category, a \emph{$P$-persistence object} in a category $\Cr$ is a functor $\Phi : P \fl \Cr$. Explicitly, it is given by a collection \replaced{$\{C_x\}_{x\in P}$}{$\{C_x\}_x$} of objects in $\Cr$ indexed by the elements of $P$, and for all $x \leq y$ in $P$, \replaced{a choice of a map in $C$}{there exists a unique map} $\phi_{x,y}: C_x \fl C_y$ \replaced{satisfying}{such that} $\phi_{y,z} \circ \phi_{x,y} = \phi_{x,z}$ whenever $x\leq y \leq z$. We denote by $\PersCat{P}{\Cr}$ the functor category of $P$-persistence objects in $\Cr$. When $\Cr$ is the category of simplicial complexes, chain complexes, or groups, respectively,~the objects of $\PersCat{P}{\Cr}$ are called $P$-persistence simplicial complexes, chain complexes, or groups respectively.

In particular, considering the poset $\mathbb{N}$ of natural numbers with the usual order, a \emph{positive $\mathbb{N}$-persistence complex}, or \emph{persistence complex} for short, over a ground ring $R$ is a family of chain complexes $C=\{C_\ast^i\}_{i\geq 0}$ over $R$, together with chain \replaced{maps}{map} $f^i : C_\ast^i \fl C_\ast^{i+1}$, giving the following diagram in the category of $R$-modules:
\[
\xymatrix{
C_\ast^0
\ar[r] ^-{f^0}
&
C_\ast^1
\ar[r] ^-{f^1}
&
\; 
\ldots
\;
\ar[r]
&
C_\ast^i
\ar[r] ^-{f^i}
&
C_\ast^{i+1}
\ar[r]
&
\;
\ldots
}
\]
A \emph{persistence module} $M$ is a persistence complex concentrated in degree zero, \ie a family of $R$-modules $\{M^i\}_{i\geq 0}$, together with maps $f^i : M^i \fl M^{i+1}$. 

The persistence complex $C$ is said to be of \emph{finite type} if each $R$-module $C^i_k$ is finitely generated, and if there exists some $N$ such that the maps $f^i$ are isomorphisms for $i\geq N$.

\subsection{Persistent homology}

A \emph{simplical complex} is a set $K$, together with a collection $\mathcal{K}$ of subsets of $K$, satisfying the following two conditions:
\begin{enumerate}[{\bf i)}]
\item for every $v\in K$, $\{v\}\in \mathcal{K}$, and $\{v\}$ is called a \emph{vertex} of $K$;
\item $\sigma\in \mathcal{K}$ and $\sigma'\subseteq\sigma$ imply $\sigma'\in \mathcal{K}$.
\end{enumerate}
A \emph{$k$-simplex of $K$} is an element $\sigma$ of $\mathcal{K}$ whose cardinal $\mid\sigma\mid$ is equal to $k+1$. An 
\emph{orientation} of a $k$-simplex $\sigma=\{v_0,\ldots,v_k\}$ is an equivalence \added{class} of orderings of the $v_i$ in $\sigma$ \replaced{under the equivalence relation generated by even permutations}{; two orderings are equivalent if they can be obtained from an even permutation}. A simplex with an orientation is called an oriented simplex, and we write $[v_0,\ldots,v_k]$ or $[\sigma]$ to denote the equivalence class.

Denote by $C_k(K)$ the \emph{$k$th chain module} of $K$ defined as the free $R$-module on oriented $k$-simplices of $K$. The boundary operator $\partial_k:C_k(K) \fl C_{k-1}(K)$ is the map defined on any simplex $\sigma=\{v_0,\ldots,v_k\}$ by setting 
\[
\partial_k(\sigma) = \sum_i(-1)^i[v_0,\ldots,\widehat{v_i},\ldots,v_k],
\]
where, on the right side, $\widehat{v_i}$ indicates that the vertex $v_i$ is eliminated from the simplex. Denote by $Z_k(K)=\text{ker}\,\partial_k$, $B_k(K)=\text{Im}\,\partial_{k+1}$, and $H_k(K)=Z_k(K)/B_k(K)$, the \emph{cycle}, \emph{boundary}, and \emph{homology} modules respectively. 

A \emph{subcomplex} of $K$ is a simplicial complex $L$ such that $L\subseteq K$.
A \emph{filtered complex} is a complex $K$ together with a \emph{filtration}, that is a nested sequence of subcomplexes:
\[
K^0 \subseteq K^1 \subseteq K^2 \subseteq \ldots \subseteq K^n = K.
\]

Given such a filtration, we define the persistence complex $C(K)=\{ C_\ast(K^i) \}_{i\in \N}$, in which the chain maps $f^{i, i+1} \colon C_\ast(K^i) \fl C_\ast(K^{i+1})$ are induced by the inclusions $K^i \fl K^{i+1}$. Applying the $k^{th}$ homology functor $H_k$ to each complex, we obtain $H_k(C(K)) := \{ H_k \left( C(K_i)_* \right) \}_{i \in \N}$, which has the structure of persistence module over the ground ring $R$. Denoting by $\eta^{i,i+p}_k : H_k(K^i) \fl H_k(K^{i+p})$ the map induced by the inclusion $K^i \fl K^{i+p}$, we define the \emph{$p$-persistent $k^{th}$ homology group of $K^i$} as $\mathrm{Im}(\eta^{i,i+p}_k)$, which we denote by $H_k^{i,p}(K)$, see~\cite{EdelsbrunnerLetscherZomorodian02}.

When $K$ is finite, the persistence complex $C(K)$ is of finite type, and hence the persistent homology $H_k(C(K))$ is also of finite.

\subsection{Classification of persistence module and algorithm for persistence}

Given a persistence $R$-module $(M^i, \phi_{i,i+1})_i$, we define a graded module over $R[t]$ by setting
\[
\alpha(M) = \bigoplus_{i=0}^\infty M^i,
\]
where the $R$-module structure is induced by the direct sum of its components, and the action of $t$ is defined by
\[
t \cdot (m_0, m_1, \dots, m_k, \dots) = (0, \phi_{0,1}(m_0), \phi_{1,2}(m_1), \dots, \phi_{n,n+1}(m_k), \dots).
\]
The correspondence $\alpha$ is functorial and establishes an equivalence between the category of persistence $R$-modules of finite type and the category of finitely generated graded $R[t]$-modules.
When the ground ring is a field $\k$, combining this with the structure theorem of finitely generated graded modules, we know that a persistence module has a decomposition 
\begin{equation}
\label{Eqn:GradedModChar}
\left( \bigoplus_{i=1}^n \Sigma^{k_i} \k[t] \right) \oplus 
\left( \bigoplus_{j=1}^m \Sigma^{l_j} \k[t]/(t^{h_j}) \right)
\end{equation}
for some $n,m$ and families of natural numbers $(k_i)_i$, $(l_j)_j$ and $(h_j)_j$, where $\Sigma^k$ denotes a $k$-shift in grading.

From this classification of persistent modules one derives an algorithm for computation of persistence homology over a field~\cite{Zomorodian:2005aa}. This algorithm is defined as follows. Let $\{e_j\}$ and $\{\hat{e}_i\}$ denote homogenous bases for the persistence $\k$-modules $C_k$ and $Z_{k-1}$. Denote by $M_k$ the matrix of $\partial_k$ in these bases. The usual procedure for calculating homology is to reduce the matrix to Smith normal form and read off the description of $H_k$ from the diagonal elements. We compute these bases and matrix representations by induction on $k$. For $k=1$, the standard basis of $C_0 = Z_0$ is homogenous and we may proceed as usual.

Suppose now that we have a representation $M_k$ of $\partial_k$ relative to the standard basis $\{e_j\}$ of $C_k$ and a homogeneous basis $\{\hat e_i\}$ of $Z_{k-1}$. For induction, we must compute a homogeneous basis for $Z_k$ and represent $\partial_{k+1}$ relative to the computed basis for $Z_k$ and the standard basis of $C_{k+1}$.

We begin by sorting the basis $\{\hat e_i\}$ in reverse degree order and then transforming the matrix $M_k$ into column-echelon form $\tilde M_k$, also known as lower staircase form. We call the first non-zero value in a column a pivot, and a row with a pivot is called a pivot row. 
The diagonal elements in Smith normal form are the same as the pivots in column-echelon form and the degrees of the corresponding basis elements are also the same in both cases~\cite{Zomorodian:2005aa}. 
Zomorodian and Carlsson prove in~\cite{Zomorodian:2005aa} that each row contributes to the persistent homology $H_{k-1}$ of $\mathcal C_*$ in the following way:
\begin{enumerate}[{\bf i)}]
\item If row $i$ is a pivot row with pivot $t^n$, then it contributes $\Sigma^{\deg \hat e_i} \k[t] / t^n$,
\item If row $i$ is not a pivot row, \added{then} it contributes $\Sigma^{\deg \hat e_i} \k[t]$,
\end{enumerate}
where these contributions correspond to factors in the characterization (\ref{Eqn:GradedModChar}), see~\cite{Zomorodian:2005aa} for additional details.

\section{Natural homology of directed spaces}
\label{sec:natural}

In this section, we recall definitions and constructions involving directed spaces and in particular, we recall natural homology, an invariant thereof. In Sections~\ref{SS:dspaces} and~\ref{sec:tracecat}, we recall the definition of directed space and associated categorical constructions, as well as the notions of natural systems and composition pairings. Finally, Section~\ref{SS:NaturalHomology} recalls the definition of natural homology from~\cite{naturalhomology}, an invariant of directed spaces encoded via natural systems. Therein we also recall the notion of bisimulation and provide, see Lemma~\ref{lem:dihomeobisim}, the link between dihomeomorphic directed spaces and bisimulation equivalence of their associated natural homology diagrams.

\subsection{Directed spaces}\label{SS:dspaces}

Recall from~\cite{grandisbook} that a \emph{directed space}, or \emph{dispace}, is a pair $\Xr=(X, dX)$, where~$X$ is a topological space and $dX$ is a set of paths in $X$, \ie continuous maps from~$[0,1]$ to~$X$, called \emph{directed paths}, or \emph{dipaths}, satisfying the three following conditions:
\begin{enumerate}[{\bf i)}]
\item Every constant path is directed,
\item \replaced{If $f \in dX$, then given any continuous, monotonic map $\rho : [0,1] \fl [0,1]$, we have $f\circ \rho \in dX$. The map $\rho$ is called a \emph{reparametrisation-restriction}, or simply a \emph{reparametrisation} if it is surjective}{$dX$ is closed under \emph{monotonic reparametrisation}, \emph{i.e.}, for any continuous map $\rho : [0,1] \fl [0,1]$ such that $\rho(0)=0$, $\rho(1)=1$ and $\rho$ is monotonic, then $f\circ \rho$ belongs to $dX$ for every $f$ in $dX$,} 
\item $dX$ is closed under concatenation.
\end{enumerate}

We will denote by $f\star g$ the concatenation of dipaths $f$ and $g$.
A morphism $\map{\varphi}{(X,dX)}{(Y,dY)}$ of dispaces is a continuous function $\varphi : X \fl Y$ that preserves directed paths, \emph{i.e.}, for every path $\map{f}{[0,1]}{X}$ in $dX$, the path $\varphi_*(p)=\map{\varphi\circ f}{[0,1]}{Y}$ belongs to $dY$. The category of dispaces is denoted $\dtop$. 

\added{A \emph{partially ordered space}, or \emph{pospace} $\Xr = (X,\leq_X)$ consists of a Hausdorff topological space $X$ and a partial order $\leq_X$ which is closed in the product topology $X \times X$. Pospaces are naturally interpreted as directed spaces by equipping them with the set of increasing paths $dX$ from the unit interval, with its usual ordering, to $X$. Thus, morphisms in $\dtop$ between pospaces are simply continuous, order-preserving maps.}

Given two directed spaces $X$ and $Y$, a dihomeomorphism from $X$ to $Y$ is a morphism of directed spaces $\varphi: \ X \rightarrow Y$ such that there exists a morphism of directed spaces $\psi: \ Y \rightarrow X$, with $\varphi \circ \psi=Id_Y$ and $\psi \circ \varphi=Id_X$. \replaced{In the case of compact partially ordered spaces, this is implied by $\varphi$ and $\psi$ being continuous, order-preserving, and mutually inverse.}{Equivalently, for the sub-class of compact partially-ordered spaces, $\varphi$ and $\psi$ must be continuous, {respect} the respective partial orders, and be inverses to one another.} \replaced{H}{It is clear that h}aving dihomeomorphic directed spaces is a \deleted{fine} semantic equivalence\deleted{,} when it comes to applications in concurrency theory. 

Two dipaths $f$ and $g$ are \emph{reparametrisation equivalent} if there exist 
reparametrisations $\rho$, $\lambda$ such that $f\circ \rho = g \circ \lambda$. 
The \emph{trace}~\cite{Faj06} of a dipath $f$ in $\Xr$, denoted by $\tr{f}$, or $f$ if no confusion is possible, is the equivalence class of $f$ modulo reparametrisation equivalence. The concatenation of dipaths of $\Xr$ is compatible with this quotient, inducing a concatenation of traces defined by $\tr{f}\star\tr{g} := \tr{f\star g}$, for all dipaths $f$ and~$g$ of $\Xr$, \cite{Uli2}. \deleted{The sub-topological space of $dX$ of dipaths in $X$ from $\alpha \in X$ to $\beta \in X$ is denoted by $dX(\alpha,\beta)$.} For $\varphi: \ X \rightarrow Y$ a morphism of directed spaces, abusing notation, we write $\varphi_*(\overline{f})=\overline{\varphi_*(f)}$ for the image of the trace $\overline{f}$. 

In the case of the geometric realization of finite geometric precubical sets without loops, it is known \cite{goubault2016} that we have a complete metric space and that we can define the $l_1$-arc length $l_1(p)$ of a directed path $p$. In that context, traces $\tr{f}$ always have a representative $f$ such that the length of the sub-path from $f(0)$ to $f(t)$ along $f$ is $t$. 

For a dispace $\Xr=(X, dX)$ and $x,y\in X$, we denote by $\overrightarrow{Di}(\Xr)(x,y)$ the space of dipaths $f$ in $X$ with source $x=f(0)$ and target $y=f(1)$, equipped with the compact-open topology.
For $x,y\in X$, the \emph{trace space} of the dispace $\Xr$ from $x$ to $y$, denoted by~$\trace{\Xr}{x}{y}$, is the quotient of $\overrightarrow{Di}(\Xr)(x,y)$ by reparametrisation equivalence, equipped with the quotient topology.

\subsection{Trace category and trace diagrams}
\label{sec:tracecat}

Recall that the \emph{category of factorizations} of a category $\Cr$, denoted by $\Fact{\Cr}$, is the category whose objects are the morphisms of $\Cr$, and a morphism from $f$ to $g$ is a pair $(u,v)$ of morphisms of $\Cr$ such that $v\circ f \circ u =g$ holds in $\Cr$.

\begin{center}
\begin{minipage}{0.4\textwidth}
$(u,v): f \fl g$
\end{minipage}
\begin{minipage}{0.4\textwidth}
\xymatrix@C=1em@R=1em{
&
\cdot
    \ar[rr]^-{f}
&
{}\ar@{}[d]|-{=}
&
\cdot
    \ar[dr]^-{v}
&
\\
\cdot
\ar[ur]^-{u}
\ar[rrrr]_-{g}
&&&&
\cdot
}
\end{minipage}
\end{center}

Composition is given by 
\begin{center}
\begin{minipage}{0.4\textwidth}
$(u,v)\circ (u',v') = (u\circ u',v'\circ v)$
\end{minipage}
\begin{minipage}{0.4\textwidth}
\xymatrix@C=1em@R=1em{
&&
\cdot \ar[rr]^-{f}
&
{}\ar@{}[d]|-{=}
&
\cdot \ar[dr] ^-{v}
&&
\\
&
\cdot \ar[ur]^-{u} \ar[rrrr]_-{g}
&&
{}\ar@{}[d]|-{=}
&&
\cdot \ar[dr] ^-{v'}
\\
\cdot \ar[ur]^-{u'} \ar[rrrrrr]_-{h}
&&&&&&
\cdot
}
\end{minipage}
\end{center}
whenever the pairs $u',u$ and $v,v'$ are composable in $\Cr$, and the identity on $\map{f}{x}{y}$ is the pair~$(1_x , 1_y)$. 
A \emph{natural system on $\Cr$ with values in a category $\V$} is a functor
$\map{D}{\Fact{\Cr}}{\V}$.
{We will denote by $D_{f}$ (resp.~$D(u,v)$) the image of an object $f$ (resp. morphism $(u,v)$) of $\Fact\Cr$.}

A directed space may be seen as a category using the notion of traces, and this association is functorial.
Indeed, consider the functor
\[
\overrightarrow{\mathbf{P}} : \dtop \fl \cat
\] 
with values in the category of small categories which associates to a dispace $\Xr$ the \emph{trace category of $\Xr$}, whose objects are points of~$X$, morphisms are traces of $\Xr$, and composition is given by concatenation of traces.
The \emph{trace diagram} of a dispace $\Xr$ in the category $\Top$ of topological spaces is the natural system
\[
\map{T(\Xr)}{{\Fact\diP{\Xr}}}{\Top},
\]
sending a trace $\map{\tr{f}}{x}{y}$ of $\Xr$ to the topological space $\trace{\Xr}{x}{y}$, and a morphism $(\tr{u},\tr{v})$ of $\Fact{\diP{\Xr}}$ to the continuous map 
\[
\tr{u} \star \_ \star \tr{v} : \trace{\Xr}{x}{y} \fl \trace{\Xr}{x'}{y'},
\]
which sends a trace $\tr{f}$ to $\tr{u} \star \tr{f} \star \tr{v}$.

Recall from \cite{CameronGoubaultMalbosHHA22}, that the \emph{category of natural systems with values in $\V$}, denoted by $\opnat{\V}$, is defined as follows:
\begin{enumerate}[{\bf i)}]
\setlength{\itemsep}{0pt}
  \setlength{\parskip}{0pt}
  \setlength{\parsep}{0pt}
\item its objects are pairs $(\Cr,D)$ made of a category $\Cr$ and a natural system $D:F\Cr \fl \V$,
\item its morphisms are pairs
\[
(\Phi,\tau): (\Cr,D)\rightarrow (\Cr',D')
\]
\noindent consisting of a functor $\Phi : \Cr \rightarrow \Cr'$ and a natural transformation $\tau: D \rightarrow \Phi^*D'$, where the natural system $\Phi^*D': \Fact{\Cr} \rightarrow \V$ is defined by
\[
(\Phi^*D')(f)=D'(\Phi f),
\]
\noindent for every morphism $f$ in $\Cr$, and $\Phi^*D'(u,v)=D'(\Phi(u),\Phi(v))$, for all morphisms $u,v$ of $\Cr$, 
\item composition of morphisms $(\Psi,\sigma):(\Cr', D') \fl (\Cr'', D'')$ and $(\Phi,\tau):(\Cr, D) \fl (\Cr', D')$ is defined by
\[
(\Psi,\sigma) \circ (\Phi,\tau):=(\Psi \circ \Phi,(\Phi^*\sigma) \circ \tau),
\]
where $\Psi \circ \Phi$ denotes composition of functors and where the component of the natural transformation $(\Phi^*\sigma) \circ \tau$ at a morphism $f$ of $\Cr$ is $\sigma_{\Phi(f)}\circ \tau_f$.
\end{enumerate}

\medskip

The family of functors ${T}(\Xr)$ indexed by dispaces $\Xr$ extends to a functor
\[
T : \dtop \fl \opnat{\Top}
\]
sending a dispace $\Xr$ to the pair $(\diP{\Xr}, {T}(\Xr))$.
A morphism of dispaces $\varphi : \Xr \fl \Yr$ induces continuous maps 
\[
\varphi_{x,y} : \trace{\Xr}{x}{y} \fl \trace{\Yr}{\varphi(x)}{\varphi(y)},
\]
for all points $x,y$ of $X$, and thus a natural transformation between the corresponding trace diagrams:
\[
\overrightarrow{\varphi} : T(\Xr) \dfl \diP{\varphi}^* T(\Yr).
\] 

\subsection{Natural homology}
\label{SS:NaturalHomology}

This subsection is \replaced{inspired}{taken} from~\cite{Eilenberg}. For $n\geq 1$, the~\emph{$n^{th}$ natural homology functor of $\Xr$} 
\[
\map{\sysh{n}{\Xr}}{{\Fact\diP{\Xr}}}{\ab}
\]
is the functor defined as the composite 
\[
{\Fact\diP{\Xr}} 
\overset{{T}(\Xr)}{\longrightarrow} 
\Top
\overset{H_{n-1}}{\longrightarrow}
\ab,
\] 
where ${T}(\Xr)$ is the trace diagram and $H_{n-1}$ is the $(n-1)^{th}$ singular homology functor.
This extends to a functor
\[
\overrightarrow{H}_n : \dtop \longrightarrow \opnat{\ab},
\]
sending a dispace $\Xr$ to $(\diP{\Xr},\sysh{n}{\Xr})$. 

The natural homology $\sysh{n}{\Xr}$ of a directed topological space $\Xr$ is
very fine-grained: it not only records local homology groups
$\sysh{n}{\Xr}_f$ for a trace $f$ of $\Xr$, but also for which traces
they occur.  
As with persistent homology, see Section~\ref{S:Sketch}, the aim is to describe the transformation between the groups $\sysh{n}{\Xr}_f$ and $\sysh{n}{\Xr}_g$ for any concatenation $g = u\star f \star v$ of $f$ by an extension $(u,v)$, rather than just the groups $\sysh{n}{\Xr}_f$ themselves.
%
This is done by looking at natural homology modulo a notion of bisimulation of natural systems, and more
generally of $\ab$-valued functors, defined in~\cite{naturalhomology} and recalled below. 

\subsection{Bisimulation}
\label{sec:bisimulation}

Given two small categories $\Cr$, $\Dr$ and two functors $\map{F}{\Cr}{\ab}$, $\map{G}{\Dr}{\ab}$, we call \emph{bisimulation} between $F$ and $G$
any set $R$ of triples $(x,\eta,y)$, with $x$ an object of $\Cr$, $y$ an object of $\Dr$ and $\eta : Fx \fl Gy$ an isomorphism such that:
\begin{enumerate}[{\bf i)}]
\item for every object $x$ of $\Cr$, $R$ contains some triple of the
  form $(x, \eta, y)$, and similarly for every object $y$ of $\Dr$;
\item for every triple $(x,\eta,y) \in R$ and every morphism
  $\map{i}{x}{x'}$ in $\Cr$, there is 
  a triple $(x',\eta',y') \in R$ and
  a morphism $\map{j}{y}{y'}$ in $\Dr$ such that $\eta' \circ Fi = Gj
  \circ \eta$, and symmetrically, for every $(x,\eta,y) \in R$ and
  every morphism $\map{j}{y}{y'}$ of $\Dr$ there is a triple
  $(x',\eta',y') \in R$ and a morphism $\map{i}{x}{x'}$ such that
  $\eta' \circ Fi = Gj \circ \eta$:
\end{enumerate}

\begin{center}
  \begin{tikzpicture}[scale=1]
    \node (x') at (0,0) {$x'$};
    \node (x) at (0,1.5) {$x$};
    \node (Fx') at (1,0) {$Fx'$};
    \node (Fx) at (1,1.5) {$Fx$};
    \node (Gy') at (3,0) {$Gy'$};
    \node (Gy) at (3,1.5) {$Gy$};
    \node (y') at (4,0) {$y'$};
    \node (y) at (4,1.5) {$y$};
    \draw[->] (x) -- (x');
    \draw[->] (y) -- (y');
    \draw[->] (Fx) -- (Fx');
    \draw[->] (Fx) -- (Gy);
    \draw[->] (Fx') -- (Gy');
    \draw[->] (Gy) -- (Gy');
    \node (i) at (-0.2,0.75) {$i$};
    \node (j) at (4.2,0.75) {$j$};
    \node (Fi) at (0.7,0.75) {$Fi$};
    \node (Gj) at (3.3,0.75) {$Gj$};
    \node (eta) at (2,1.7) {$\eta$};
    \node (eta') at (2,-0.3) {$\eta'$};
  \end{tikzpicture}
\end{center}

\noindent
We say that $F$ and $G$ are \emph{bisimulation equivalent} provided that there is a
bisimulation $R$ between them.  This is an equivalence relation.
We now prove that dihomeomorphic directed spaces have bisimilar natural homologies. This is somewhat implicit in \cite{dubutthesis} but has not been explicited as of, yet. 

\begin{lemma}
\label{lem:dihomeobisim}
Suppose $f: \ \Xr \rightarrow \Yr$ is a \replaced{directed homeomorphism}{morphism of directed spaces with inverse 
$g: \ \Yr \rightarrow \Xr$}. Then the functors $\sysh{n}{\Xr}$ and $\sysh{n}{\Yr}$ are bisimulation equivalent, for every $n \in \N$. 
\end{lemma}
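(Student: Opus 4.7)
The plan is to exhibit an explicit bisimulation relation $R$ between the functors $\sysh{n}{\Xr}$ and $\sysh{n}{\Yr}$, built from the isomorphisms on trace spaces induced by the dihomeomorphism $f$. First, I would observe that the functor $\diP{} : \dtop \to \cat$ sends the dihomeomorphism pair $(f,g)$ to a pair of inverse functors $\diP{f} : \diP{\Xr} \to \diP{\Yr}$ and $\diP{g} : \diP{\Yr} \to \diP{\Xr}$, and hence these induce inverse functors $\Fact{\diP{f}}$ and $\Fact{\diP{g}}$ between the factorisation categories. Composition with $f$ and $g$ on trace spaces gives, for any points $x,y \in X$, continuous maps
\[
f_{x,y} : \trace{\Xr}{x}{y} \longrightarrow \trace{\Yr}{f(x)}{f(y)}, \qquad g_{f(x),f(y)} : \trace{\Yr}{f(x)}{f(y)} \longrightarrow \trace{\Xr}{x}{y},
\]
which are mutually inverse homeomorphisms since $g \circ f = \mathrm{id}_X$ and $f \circ g = \mathrm{id}_Y$. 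Applying the singular homology functor $H_{n-1}$ yields isomorphisms of abelian groups $\eta_{\tr{h}} : \sysh{n}{\Xr}_{\tr{h}} \xrightarrow{\sim} \sysh{n}{\Yr}_{\tr{f\circ h}}$ for every trace $\tr{h}$ from $x$ to $y$ in $\Xr$.

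Next, I would define the candidate bisimulation as
\[
R = \bigl\{\, (\tr{h},\,\eta_{\tr{h}},\,\tr{f\circ h}) \;\bigm|\; \tr{h}\text{ a trace in }\Xr\,\bigr\} \;\cup\; \bigl\{\, (\tr{g\circ k},\,\eta_{\tr{g\circ k}},\,\tr{k}) \;\bigm|\; \tr{k}\text{ a trace in }\Yr\,\bigr\},
\]
although the second family is actually redundant with the first because $\tr{k} = \tr{f \circ (g\circ k)}$. The coverage condition (i) of the bisimulation is then immediate: every $0$-cell $\tr{h}$ of $\Fact{\diP{\Xr}}$ appears in some triple in $R$, and similarly every $\tr{k}$ in $\Fact{\diP{\Yr}}$ occurs as the third component via $\tr{k} = \tr{f\circ (g\circ k)}$.

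For condition (ii), given a triple $(\tr{h}, \eta_{\tr{h}}, \tr{f\circ h}) \in R$ and a $1$-cell $(\tr{u},\tr{v}) : \tr{h} \to \tr{u\star h\star v}$ in $\Fact{\diP{\Xr}}$, I take the matching $1$-cell to be $(\tr{f\circ u},\tr{f\circ v})$ in $\Fact{\diP{\Yr}}$ and the target triple to be $(\tr{u\star h\star v}, \eta_{\tr{u\star h \star v}}, \tr{f\circ(u\star h\star v)})$. The required square
\[
\eta_{\tr{u\star h\star v}} \circ \sysh{n}{\Xr}(\tr{u},\tr{v}) \;=\; \sysh{n}{\Yr}(\tr{f\circ u},\tr{f\circ v}) \circ \eta_{\tr{h}}
\]
commutes because it is the image under $H_{n-1}$ of the square of trace spaces
\[
\begin{array}{ccc}
\trace{\Xr}{x}{y} & \xrightarrow{\;f_{x,y}\;} & \trace{\Yr}{f(x)}{f(y)} \\
{\scriptstyle \tr{u}\star\_\star\tr{v}}\big\downarrow & & \big\downarrow{\scriptstyle \tr{f\circ u}\star\_\star\tr{f\circ v}} \\
\trace{\Xr}{x'}{y'} & \xrightarrow{\;f_{x',y'}\;} & \trace{\Yr}{f(x')}{f(y')}
\end{array}
\]
which commutes on the nose since $f$ is a morphism of dispaces and hence $f\circ(u\star h\star v) = (f\circ u)\star(f\circ h)\star(f\circ v)$; this is precisely the naturality of $\overrightarrow{f} : T(\Xr) \dfl T(\Yr)$ recalled in Section~\ref{sec:tracecat}. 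The symmetric condition, given a $1$-cell in $\Fact{\diP{\Yr}}$, is handled identically using $g$. The only subtlety to check carefully will be that the homology maps induced by $f$ and $g$ really are mutual inverses, but this is immediate from functoriality of $H_{n-1}$ applied to the identities $g\circ f = \mathrm{id}_X$ and $f\circ g = \mathrm{id}_Y$ at the level of trace spaces.
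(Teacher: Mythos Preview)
Your proof is correct and follows essentially the same approach as the paper's: define the bisimulation as the set of triples $(\tr{h},\eta_{\tr{h}},\tr{f\circ h})$ where $\eta_{\tr{h}}$ is the homology isomorphism induced by the homeomorphism of trace spaces coming from $f$, and verify the bisimulation squares via naturality of the induced maps on trace spaces. If anything, you are slightly more thorough than the paper in explicitly checking the coverage condition (i) and noting how the symmetric clause of (ii) is handled using $g$.
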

\begin{proof}
\added{By hypothesis, $f$ is a morphism of directed spaces with inverse 
$g: \ \Yr \rightarrow \Xr$}. 
For \replaced{a}{some} given $n$ in $\N$, we need to construct $R$, a set of triples $(x,\eta,y)$, with $x$ an object of $\Fact{\diP{\Xr}}$, $y$ an object of $\Fact{\diP{\Yr}}$, that is $x$ and $y$ are traces of $\Xr$ and $\Yr$ respectively, and $\eta : \sysh{n}{\Xr}_x \fl \sysh{n}{\Yr}_y$ is an isomorphism making this set of triples a bisimulation. 

We define \deleted{the} $R$ from $f$ and $g$ as follows. It is going to contain all
triples $(x,\eta_{x,y},y)$ with $x$ any trace of $\Xr$, $y=f_*(x)$ which is a trace of $\Yr$, and $\eta_{x,y}$ is the map from $\sysh{n}{\Xr}_x$ to $\sysh{n}{\Yr}_y$ induced by $f$ as follows.

We note first that when $f$ is an homeomorphism, its induced map $f_*$ from ${\mathfrak{T}}({\Xr})$ to ${\mathfrak{T}}({\Yr})$ (the trace spaces defined end of Section \ref{SS:dspaces}) is an homeomorphism as well, and similarly for its restriction to $\trace{\Xr}{\alpha}{\beta}$, into $\trace{\Yr}{f(\alpha)}{f(\beta)}$, where $\alpha=x(0)$ and $\beta=y(0)$. Hence $f_*$ induces an isomorphism from the (standard) homology $H_{n-1}(\trace{\Xr}{\alpha}{\beta})$ to the (standard) homology $H_{n-1}(\trace{\Yr}{f(\alpha)}{f(\beta)})$. The former is $\sysh{n}{\Xr}_x$ and the latter is $\sysh{n}{\Yr}_y$ since $y(0)=f(\alpha)$ and $y(1)=f(\beta)$. This  isomorphism $\eta_{x,y}$ can be written explicitely as follows.

Consider a singular homology class 
$[p]$, where $p$ is a cycle of dimension $n-1$ in the trace space of $\Xr$ from $\alpha$ to $\beta$.
More precisely, this is the homology class of a sum \replaced{$p=\sum \lambda_i p_i$}{$p=\sum\limits_{i=1}^k \lambda_i p_i$} for some singular simplexes $p_i$ of dimension $n-1$, which are continuous maps $p_i: \ \Delta_{n-1} \rightarrow \trace{\Xr}{\alpha}{\beta}$ taking \replaced{their images}{its image} within the traces of ${\mathfrak{T}}{(\Xr)}$ that start at $\alpha$ and end at $\beta$.  
Then $\eta_{x,y}([p])$ is 
\begin{equation}
\eta_{x,y}([p]) = \sum\limits_{i=1}^k \lambda_i [p'_i],
\label{eq:eta}
\end{equation}
\noindent where $p'_i$ is the $(n-1)$-simplex defined by 
$p'_i(t_0,\ldots,t_{n-1})=f_*(p_i(t_0,\ldots,t_{n-1}))$, for $(t_0,\ldots,t_{n-1})\in \Delta_{n-1}$ the standard $(n-1)$-simplex in $\Re^n$. By an abuse of notation, we write $p'_i=f_*(p_i)$. 


Now we check that the set of triples $R$ is indeed a bisimulation. Consider a morphism 
$i: \ x \rightarrow x'$ in $\Fact{\diP{\Xr}}$, i.e. $i=(u,v)$ with $u$ and $v$, traces in $\Xr$. We consider now \added{the} morphism $j: \ y \rightarrow y'$ in $\Fact{\diP{\Yr}}$ defined as $j=(f_*(u),f_*(v))$ and $y'=f_*(u)\star y \star f_{*}(v)=f_*(x')$. 
As before, $f_*$ induces an isomorphism $\eta_{x',y'}$ between $\sysh{n}{\Xr}_{x'}$ and
$\sysh{n}{\Yr}_{y'}$ and $(x',\eta_{x',y'},y')$ is in~$R$. Furthermore, 
$\eta_{x',y'} \circ \sysh{n}{\Xr}_i=\sysh{n}{\Yr}_j \circ \eta_{x,y}$ as we are going to see. 

Let $[p]$ be again a {(standard)} homology class in dimension $n-1$, of $\trace{\Xr}{\alpha}{\beta}$ \replaced{from}{with} $\alpha=x(0)$ to $\beta=x(1)$, i.e. $[p]$ is the homology class of the cycle
\replaced{$p=\sum \lambda_i p_i$}{$p=\sum\limits_{i=1}^k \lambda_i p_i$} as we have seen already, for some singular simplexes $p_i$ of dimension $n-1$.
The effect of $\sysh{n}{\Xr}_i$ on $[p]$ is the homology class in $\sysh{n}{\Xr}_{x'}$: 
$$[q]=\sum\limits_{i=1}^k \lambda_i [q_i]$$
\noindent where $q_i$ is the $(n-1)$-simplex in $\mathfrak{T}(\Yr)$ defined as
\[
q_i(t_0,\ldots,t_{n-1})=u\star p_i(t_0,\ldots,t_{n-1}) \star v,
\]
for $(t_0,\ldots,t_{n-1})\in \Delta_{n-1}$. 
We will write $q_i=u \star p_i \star v$ for short.
Now, 

$$\begin{array}{rcl}
\eta_{x',y'}\circ \sysh{n}{\Xr}_i ([p]) & = & \sum\limits_{i=1}^k \eta_{x',y'}([q_i]),
\end{array}$$
\noindent which is equal, by Equation (\ref{eq:eta}), to 
$$\begin{array}{rcl}
\eta_{x',y'}\circ \sysh{n}{\Xr}_i ([p]) & = & 
\sum\limits_{i=1}^k \lambda_i [f_*(q_i)], \\
& = & \sum\limits_{i=1}^k \lambda_i [f_*(u)\star f_*(p_i) \star f_*(v)],
\end{array}
$$
\noindent as easily checked by instantiating the left and right terms of the equation on tuples $(t_0,\ldots,t_{n-1})\in \Delta_{n-1}$. 


But as we saw in \eqref{eq:eta}, 
$\eta_{x,y}([p])=\sum\limits_{i=1}^k \lambda_i [f_*(p_i)]$. 
Therefore: 
$$\begin{array}{rcl}
\sysh{n}{\Yr}_j \circ \eta_{x,y}([p]) & = & \sum\limits_{i=1}^k \lambda_i [f_*(u) \star f_*(p_i)\star f_*(v)] \\ 
& = & \eta_{x',y'}\circ \sysh{n}{\Xr}_i ([p]).
\end{array}
$$

Conversely, 
consider a morphism $j: \ y \rightarrow y'$ in $\Fact{\diP{\Yr}}$ with $j=(u',v')$. We have $x'=g_*(u')\star y \star g_{*}(v')=g_*(y')$. 
Let us consider now $i: \ x \rightarrow x'$ in $\Fact{\diP{\Xr}}$, with 
$i=(g_*(u'),g_*(v'))$. 
As before, $g_*$ induces an isomorphism $\eta'_{x',y'}$  (resp. $\eta'_{x,y}$) between $\sysh{n}{\Xr}_{x'}$ (resp. $\sysh{n}{\Xr}_{x}$) and
$\sysh{n}{\Yr}_{y'}$ (resp. $\sysh{n}{\Yr}_{y}$), which is easily seen to be equal to the inverse $\eta^{-1}_{x',y'}$ (resp. $\eta^{-1}_{x,y}$) of $\eta_{x',y'}$ (resp. $\eta_{x,y}$), since $g_*$ is the inverse of~$f_*$.  

The same calculation as before, exchanging the roles of $\Xr$ with $\Yr$, and $\eta$ with $\eta'$, yields
$\eta'_{x',y'} \circ \sysh{n}{\Yr}_j=\sysh{n}{\Xr}_i \circ \eta'_{x,y}$,
\ie $\eta^{-1}_{x',y'}\circ \sysh{n}{\Yr}_j=\sysh{n}{\Xr}_i \circ \eta^{-1}_{x,y}$, and, by pre-composing this equation with $\eta_{x',y'}$ and post-composing it with $\eta_{x,y}$, we get 
$\sysh{n}{\Yr}_j \circ \eta_{x,y}=\eta_{x',y'}\circ \sysh{n}{\Xr}_i$, and $(x,\eta_{x,y},y) \in R$. 

Hence $R$ is a bisimulation. 
\end{proof}


\section{Persistent homology of directed spaces}

\label{sec:persdir}

In this section we explore the use of persistent homology as an invariant of directed spaces, as well as its relationship to natural homology in the context of partially ordered spaces. First, we show that for these directed spaces, the associated natural homology modules are in fact persistence objects as defined in Section~\ref{S:Sketch}. We then define persistent homology along traces in a directed space and show the compatibility of this definition with the interpretation of natural homology as a persistence object. To finish the section, we illustrate our constructions on the matchbox example.

\subsection{Natural homology as a persistence object}

\label{sec:nathomaspers}

Consider a directed space $\Xr = (X, dX)$ and its trace category $\diP{\Xr}$. We define a relation on traces in $\Xr$ by setting
\begin{equation}\label{Eqn:ExtPreorder}
f \leq g \qquad \iff \qquad \exists\; u,v, \quad g = ufv,
\end{equation}
\added{that is, there exists an extension $(u,v)$ from $f$ to $g$. }

We recall that a \emph{pospace} $\Xr = (X,\leq_X)$ consists of a Hausdorff topological space $X$ and a partial order $\leq_X$ which is closed in the product topology $X \times X$. {In what follows, we will exclusively consider compact pospaces}. Pospaces are naturally interpreted as directed spaces by equipping them with the set of increasing paths $dX$ from the unit interval, with its usual ordering, to $X$.

\begin{lemma}\label{L:DspacePO}
In any directed space, the relation $\leq$ defined in (\ref{Eqn:ExtPreorder}) is a pre-order. In a pospace, it is a partial order relation.
\end{lemma}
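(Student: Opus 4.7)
The plan is to verify reflexivity and transitivity in full generality first, and then extract antisymmetry from the pospace hypothesis via the monotonicity of dipaths.

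For reflexivity, given a trace $\tr f$ with source $a = f(0)$ and target $b = f(1)$, I would take $u$ and $v$ to be the constant paths at $a$ and $b$ respectively. Axiom~\textbf{i)} of a directed space ensures these are dipaths, and since the trace category identifies paths modulo monotonic reparametrization, $\tr u \star \tr f \star \tr v = \tr f$, witnessing $\tr f \leq \tr f$. For transitivity, suppose $\tr f \leq \tr g$ and $\tr g \leq \tr h$, so $\tr g = \tr{u_1} \star \tr f \star \tr{v_1}$ and $\tr h = \tr{u_2} \star \tr g \star \tr{v_2}$. Substituting and using associativity of trace concatenation gives $\tr h = (\tr{u_2} \star \tr{u_1}) \star \tr f \star (\tr{v_1} \star \tr{v_2})$, and the two concatenated factors are traces of dipaths by axiom~\textbf{iii)}. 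This establishes the pre-order structure on any directed space.

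For antisymmetry in the pospace case, assume $\tr f \leq \tr g$ and $\tr g \leq \tr f$, with witnesses $\tr g = \tr{u_1} \star \tr f \star \tr{v_1}$ and $\tr f = \tr{u_2} \star \tr g \star \tr{v_2}$. Composing yields
\[
\tr f = (\tr{u_2} \star \tr{u_1}) \star \tr f \star (\tr{v_1} \star \tr{v_2}).
\]
Writing $a = f(0)$ and $b = f(1)$, the concatenation $u_2 \star u_1$ is then a dipath from $a$ to $a$, and $v_1 \star v_2$ a dipath from $b$ to $b$. In a pospace, any dipath $p$ from a point $c$ to itself must satisfy $c = p(0) \leq p(t) \leq p(1) = c$ for every $t \in [0,1]$ by monotonicity, and antisymmetry of the partial order forces $p(t) = c$. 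Hence $u_2 \star u_1$ is constant at $a$ and $v_1 \star v_2$ is constant at $b$.

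The key step remaining is to conclude that each of $u_1, u_2, v_1, v_2$ is itself constant. This follows because a concatenation of two dipaths is constant only when each piece is constant (the midpoint of the concatenation attains the constant value, and monotonicity between the endpoints forces both halves to be constant as well); in particular $u_1$ is constant at $a$ and $v_1$ is constant at $b$. Therefore $\tr g = \tr{u_1} \star \tr f \star \tr{v_1} = \tr f$ as traces, giving antisymmetry. The one delicate point to keep track of throughout is the passage between paths and their reparametrization classes, but since constant paths act as identities in $\diP{\Xr}$, no subtlety arises.
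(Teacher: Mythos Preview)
Your proof is correct and follows the same overall scheme as the paper: reflexivity via constant paths, transitivity via associativity of concatenation, and antisymmetry by composing the two extensions to obtain directed loops at the endpoints, concluding that all four extending traces are constant. The only difference is in how the step ``a directed loop in a pospace is constant'' is justified: the paper invokes an external result (Haucourt's Theorem~3.15, stated later as Theorem~\ref{T:HaucourtPospace}) to say the image of such a loop is a singleton, whereas you give the direct monotonicity argument $c = p(0) \leq_X p(t) \leq_X p(1) = c$. Your route is more elementary and self-contained; the paper's citation is strictly stronger than what is needed here.
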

\begin{proof}
Since constant paths are directed, the relation is reflexive, and we have transitivity by associativity of concatenation of traces. Indeed, if $f\leq g$ and $g \leq h$, there exist extensions $(u,v)$ and $(u',v')$ such that
\[
f = ugv \qquad\text{and}\qquad g = u'hv'.
\]
Thus, $f = u(u'hv')v = (uu')h(v'v)$, and $f \leq h$. In the case of a \replaced{pospace}{loop-free directed space}, we also prove anti-symmetry of $\leq$. Consider $f,g \in dX$ such that $f \leq g$ and $g \leq f$. By definition there exist extensions such that $f = ugv$ and $g=u'fv'$. Thus $f = uu' f v'v$, so $uu'$ and $vv'$ \replaced{each have the same beginning and end-points, which implies they are both constant paths. Indeed, given a path $h:x \fl x$ in a pospace, any point $x'$ in the image of $h$ satisfies $x\leq x' \leq x$.}{are loops, and must therefore be constant paths.} \replaced{T}{By Theorem~\ref{T:HaucourtPospace} below, t}his means that the image of both $uu'$ and $v'v$ are \replaced{singletons}{the singleton space}, meaning that the image of each of the dipaths $u,u',v,v'$ are \replaced{singletons}{the singleton space}, \ie these are all constant paths, concluding the proof.
\end{proof}
This poset, denoted  by~$\tpos{\Xr}$, will be called the \emph{trace poset} of $\Xr = (X,\leq_X)$. 
In particular,\added{ by construction,} we obtain the following result:
\begin{proposition}\label{P:FactisPos}
For a pospace $\Xr$, $\tpos \Xr$ is isomorphic to $\Fact{\diP{\Xr}}$.
\end{proposition}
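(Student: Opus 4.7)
The plan is to construct an identity-on-objects isomorphism between $\tpos{\Xr}$ and $\Fact{\diP{\Xr}}$. Both categories have the traces of $\Xr$ as their $0$-cells, and a $1$-cell $f \to g$ in $\Fact{\diP{\Xr}}$ is by definition a pair $(u,v)$ of traces with $ufv = g$, which exists precisely when $f \leq g$ in $\tpos{\Xr}$. Since $\tpos{\Xr}$ is a poset viewed as a category, and hence thin, it suffices to show that $\Fact{\diP{\Xr}}$ is also thin in the pospace setting, \ie that whenever such a factorisation pair $(u,v)$ exists it is uniquely determined as a pair of traces.

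For the uniqueness step, I would suppose $(u,v)$ and $(u',v')$ both satisfy $ufv = g = u'fv'$ as traces and argue that $\tr{u} = \tr{u'}$ and $\tr{v} = \tr{v'}$. Since trace concatenation combines dipaths whose image is the union of the images of the factors, equality as traces of $ufv$ and $g$ yields $\im{u} \cup \im{f} \cup \im{v} = \im{g}$, and similarly for the primed version. By Theorem~\ref{T:HaucourtPospace}, $\im{g}$ is either a singleton, in which case all the traces involved are constant and the claim is immediate, or order-isomorphic to $\diI$. In the latter case, under this identification the point $f(0) \in \im{f} \subseteq \im{g}$ corresponds to a specific parameter $s$. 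Then monotonicity of the dipath $u$ in the pospace $\Xr$, combined with $u(0)=g(0)$ and $u(1)=f(0)$ and with connectedness of $\im{u}$ as the continuous image of $[0,1]$, forces $\im{u}$ to be exactly the initial sub-interval of $\im{g}$ ending at $f(0)$: any point of $\im{u}$ lying strictly above $f(0)$ in the pospace order would violate monotonicity, and connectedness together with containment of the endpoints yields the full sub-interval. The same analysis applies to $\im{u'}$, giving $\im{u}=\im{u'}$.

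The key step, and the main obstacle, is this pinning down of $\im{u}$ using monotonicity together with connectedness. Once $u$ and $u'$ are shown to share source, target and image, a second application of Theorem~\ref{T:HaucourtPospace} to their common image identifies it with $\diI$, through which $u$ and $u'$ become monotonic continuous maps $[0,1] \to [0,1]$ fixing both endpoints, that is, reparametrisations, so $\tr{u}=\tr{u'}$. The symmetric argument delivers $\tr{v}=\tr{v'}$. Functoriality of the resulting identity-on-objects bijection is then automatic: both categories are thin with matching morphism-existence conditions, and the composition formula $(u,v)(u',v') = (u'u, vv')$ in $\Fact{\diP{\Xr}}$ trivially corresponds to the transitivity of $\leq$ in $\tpos{\Xr}$.
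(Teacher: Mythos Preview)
Your proof is correct and takes essentially the same approach as the paper. The paper's own argument is extremely terse: it simply asserts that Theorem~\ref{T:HaucourtPospace} gives $ufv = u'fv' \iff (u=u' \text{ and } v=v')$, hence extensions are unique; your version spells out the image-based reasoning that actually underlies this implication, making explicit why $\im{u}$ is forced to be the initial segment of $\im{g}$ up to $f(0)$ and then invoking the theorem a second time to pass from equal images to equal traces.
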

\begin{proof}
By the same arguments as in the preceding proof, if $ufv = u'fv'$ then $u=u'$ and $v=v'$ since these must be constant maps equal to $f(0)$ (resp.~$f(1)$). Thus there is at most one extension between any two traces in a pospace, which concludes the proof.
\end{proof}
This allows us to interpret natural homology as a functor on a poset, \ie as a persistence object. Indeed, in a pospace $\Xr$, the $i^{th}$ natural homology diagram associated to $\Xr$ is a $\tpos \Xr$-persistence group. Taking coefficients in a field $\k$, we obtain $\tpos \Xr$-persistence $\k$-vector spaces.

\subsection{Persistent homology along a trace}

\label{sec:persistenttrace}

Here, we show how unidimensional persistence modules may be obtained from parametrisations of traces.
For the following, we fix a trace $\tr{f}$ in a pospace $\Xr$, and a parametrisation $f$ thereof. 
Moreover, we fix a point $\alpha_f$ in the image of $f$, and denote by $t_0$ \replaced{a}{the} value such that $f(t_0) = \alpha_f$. 

We abuse notation in the following, denoting by $\alpha_f$ the constant trace equal to the point $\alpha_f$, thereby considering it as an element of the trace poset $\tpos \Xr$. 
In this optic, we denote by $[\alpha_f , f]$ the interval between $\alpha_f$ and $f$ in the trace poset, \ie traces $p$ such that $\alpha_f \leq p \leq f$ in $\tpos \Xr$.

Given parametrisations of $[0,t_0]$ and $[t_0,1]$ within $[0,1]$ via maps $\gamma_-: \ [0,1] \rightarrow [0,t_0]$ and  $\gamma_+: \ [0,1] \rightarrow [t_0,1]$ respectively, we denote by ${}_s f_s$ the trace of $f$ restricted to $[t_0 - \gamma_-(s), \deleted{t_0 +} \gamma_+(s)]$. \replaced{Applying the trace space functor, w}{W}e thus obtain a $[0,1]$-persistence \replaced{topological space}{simplicial complex} $\{ T({}_s f_s ) \}_s$. 
{This is a filtration of the trace space associated to the trace $\tr f$.} Notice that this is derived from a chain \added{$({}_s f_s)_{s\in[0,1]}$ in the poset of traces (see \added{next} Section~\ref{sec:maxtracesmaxchains})} from the constant trace \replaced{$\alpha_f$}{$f(0)$, $f(1)$, or $f(t_0)$} to $f$.
We call such filtrations initial (resp.~terminal) point filtrations when $\alpha_f=f(0)$ (resp.~$\alpha_f=f(1)$)\added{.}
Taking some 
order preserving map $\N \fl [0,1]$, we obtain $\N$-persistence simplicial complexes from the above constructions. 

All of these can be considered as $\Real$-persistence objects as follows: $[0,1]$-persistence objects are $\Real$-persistence objects which are constant on negative parameters and on parameters greater or equal than one. \added{The }$\N$-persistence objects \added{constructed above }will be considered as \deleted{piecewise constant} $\Real$-persistence objects \replaced{by considering them first as piecewise constant $[0,1]$-persistence objects}{in what follows}. However, for questions of computability, it is useful to consider $\N$-persistence.

In all the above cases, we obtain a chain $c=(f_i)_{i\in\Real}$ in the interval $[\alpha_f , f]$
and define the \emph{persistent homology along a trace $f$ with respect to $c$} as a functor from $\Real$, seen as the poset category (with the usual ordering) to the category of abelian groups, associating $i \in \Real$ to: 
\[
\sysh{n}{f,c}_i := \sysh{n}{\Xr}_{f_i}
\]
\replaced{In the following section, we will see that this object is independent of the chosen parametrisations $f,\gamma_-$, and $\gamma_+$.}{\noindent where we recall that $\sysh{n}{\Xr}_{f_i}$ is the natural homology in dimension $n$ of $\Xr$ at trace $f_i$. This $\Real$-persistence module is constructed as described above, that is by considering the $[0,1]$-persistence module given by a trace as a $\N$-persistence module and completing by constants.}

\subsection{Maximal traces and maximal chains}
\label{sec:maxtracesmaxchains}

In order to make the link between parametrisations and chains more concrete, we present the \replaced{relationship}{link} between maximal chains in the trace poset and parametrisations of extensions along maximal traces. In fact, the chain we use does not depend on (re)parametrisations, but rather on the pair of end-points visited along a trace.

Recall from the previous subsection that given a trace $\overline f$, we construct a persistence module using the following data:
\begin{itemize}
\item A parametrisation $f$ of $\overline f$.
\item A point $\alpha = f(t_0)$ in the image of $f$.
\item Reparametrisations $\gamma_-: \ [0,1] \rightarrow [0,t_0]$ and  $\gamma_+: \ [0,1] \rightarrow [t_0,1]$.
\end{itemize}

A \emph{maximal trace} is a trace which cannot be extended on either side. Hence, these are the maximal elements in $\tpos \Xr$. Maximal chains in $\tpos \Xr$ link minimal elements, \ie constant dipaths, to maximal traces along a sequence of extensions. Clearly, for a trace $\overline f$, the above data encodes a maximal chain in the trace poset. Indeed, we obtain a chain ${}_s f_s$ from $\alpha$, seen as a constant trace, to $\overline f$, which is maximal since it is derived from the continuous maps $\gamma_+$ and $\gamma_-$.

Consider $t\in \tpos\Xr$ a maximal trace. Let $\alpha_0:= t(0) \in \Xr$ be the starting point of $t$, and consider the interval $[\alpha_0, t]$ in $\tpos \Xr$. For any $g,h \in [\alpha_0, t]$, we have either $g\leq h$ or $h\leq g$, since all elements of this interval are traces starting at $\alpha_0$ and which are sub-traces of $t$. Thus the interval $[\alpha_0, t]$ is itself a chain. 

From this we deduce that there is a unique maximal chain from $\alpha_0$ to~$t$, which we denote by $C^0_t$, namely the interval between them. A symmetric argument shows that this is also the case when we consider chains from $\alpha_1:=t(1)$, the terminal point of $t$, in which case the unique chain, denoted by $C^1_t$\added{,} is the interval $[\alpha_1, t]$.

Now let us consider a point $\alpha$ in the trace $t$ which is neither the starting nor terminal point of $t$. Let $t_1 \leq t$ the trace from $\alpha_0$ to $\alpha$, and $t_2 \leq t$ the trace from $\alpha$ to $\alpha_1$. 
We claim that the interval $[\alpha, t]$ 
is the cartesian product, as posets, of the chains $C^1_{t_1}$ and $C^0_{t_2}$. Indeed, given any trace $f \in [\alpha, t]$, we know that $f$ is a subtrace of $t$ which passes through $\alpha$. Thus there exist a unique pair of traces $(f_1, f_2) \in [\alpha, t_1] \times [\alpha, t_2]$ such that $f = f_1 \star f_2$. Conversely, any pair $(f_1, f_2) \in [\alpha, t_1] \times [\alpha, t_2]$ of traces can be concatenated to obtain a trace in the interval $[\alpha, t]$. So any chain from $\alpha$ to $t$ is a chain in the product $[\alpha, t_1] \times [\alpha, t_2]$, ordered coordinate-wise.

More generally, given \emph{any} \added{complete, \ie closed under infima and suprema, }maximal chain $C$ in $\tpos \Xr$, we know that it has a maximal element, namely \replaced{its supremum}{the union of all traces in $C$}, which we denote by $t$. Furthermore, since every trace is a compact subspace of $X$, it has a minimal element, $\alpha$, which must be a constant trace by maximality of $C$. Thus every maximal chain in $\tpos \Xr$ is of the type described in the previous paragraphs.

Summing this up, we obtain the following result:

\begin{theorem}
\label{t:maxchains}
Let $\Xr$ be a compact pospace, consider its trace poset $\tpos \Xr$, and let $C$ be a \added{complete }maximal chain therein. Then \replaced{the maximal element of $C$}{$C$ has a maximal element which} is a maximal trace $t_C$, and \replaced{its minimal element}{has a minimal element which} is a constant trace $\alpha_C$. Furthermore,
\begin{itemize}
\item If $\alpha_C$ is the beginning-point $\alpha_0$ or end-point $\alpha_1$ of \replaced{$t_C$}{$t$}, then $C$ is equal to the interval $[\alpha_C, t_C]$.
\item If not, then $C$ is a chain in the product of chains $C^1_{t_1}$ and $C^0_{t_2}$, where $t_1$ (resp.~$t_2$) is the unique subtrace of \replaced{$t_C$}{$t$} going from $\alpha_0$ to $\alpha_C$ (resp.~from $\alpha_C$ to $\alpha_1$).
\end{itemize}
\end{theorem}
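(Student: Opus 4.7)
The plan is to organise the observations made in the discussion preceding the theorem into a clean argument. First I would establish the existence of the maximal and minimal elements of $C$, and then handle the two cases of the second part separately, relying on Proposition~\ref{P:FactisPos} and Theorem~\ref{T:HaucourtPospace} to identify traces with their images.

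For existence of $t_C$, I would consider the union $U$ of the images of all traces belonging to $C$. By Theorem~\ref{T:HaucourtPospace}, each such image is either a singleton or a copy of the directed unit interval, and since $C$ is a chain the family of images is linearly ordered by inclusion; hence $U$ is itself totally ordered and connected inside $X$. I would then argue that $U$ is the image of a single dipath $t_C$ of $\Xr$, so that $t_C$ is an upper bound of $C$ in $\tpos\Xr$; by maximality of $C$ the trace $t_C$ admits no proper extension, so it is a maximal trace. For existence of $\alpha_C$, I would pick any point $\alpha$ in the image of $t_C$ and consider the constant trace at $\alpha$, which lies below every trace in $C$ that passes through $\alpha$; a standard maximality argument then shows that $C$ actually contains a constant trace $\alpha_C$, for otherwise inserting any constant sub-trace of its minimum would strictly enlarge $C$.

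For the first case, if $\alpha_C$ coincides with the beginning-point $\alpha_0$ or the end-point $\alpha_1$ of $t_C$, I would show that the whole interval $[\alpha_C, t_C]$ is already totally ordered: by Theorem~\ref{T:HaucourtPospace} every element of this interval is a sub-trace of $t_C$ sharing the endpoint $\alpha_C$, and two such sub-traces are linearly ordered by the inclusion of their images inside the directed unit interval $t_C$. Thus $[\alpha_C, t_C]$ is itself a chain, and maximality of $C$ forces $C = [\alpha_C, t_C]$, yielding uniqueness. For the second case, I would use the splitting $t_C = t_1 \star t_2$ at $\alpha$ and the concatenation map to produce the poset isomorphism $[\alpha_C, t_C] \cong C^1_{t_1} \times C^0_{t_2}$ sketched in the preamble: any $f \in [\alpha_C, t_C]$ is a sub-trace of $t_C$ passing through $\alpha_C$, so decomposes uniquely as $f = f_1 \star f_2$ with $f_1 \in [\alpha_C, t_1]$ and $f_2 \in [\alpha_C, t_2]$, and this decomposition is an order isomorphism. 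Since $C$ is a maximal chain in $[\alpha_C, t_C]$, it corresponds to a maximal chain of this product of chains.

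The main obstacle I anticipate is the first step, namely showing that the union $U$ of images of traces in $C$ really is the image of a dipath of $\Xr$, rather than merely a totally ordered subset of $X$. In full generality this requires parametrising $U$ continuously, which uses both the Hausdorff property and the closedness of the order in the pospace $\Xr$; without some such hypothesis, an increasing union of dipaths need not yield a dipath, and one would need either a compactness assumption or a direct appeal to the structure theorem for closed connected totally ordered subspaces of a pospace. Once this topological point is settled, the rest of the argument is a bookkeeping exercise using Proposition~\ref{P:FactisPos} to identify traces with their images and to rule out ambiguity in the extension $(u,v)$ witnessing the order relation.
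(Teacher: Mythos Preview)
Your proposal is correct and follows essentially the same route as the paper: the proof in the paper is actually the discussion immediately preceding the theorem statement, which takes the union of the traces in $C$ as the maximal element $t_C$, uses maximality of $C$ to force the minimum to be a constant trace, shows that $[\alpha_C,t_C]$ is already a chain when $\alpha_C$ is an endpoint, and gives the product decomposition $[\alpha_C,t_C]\cong C^1_{t_1}\times C^0_{t_2}$ via the unique splitting $f=f_1\star f_2$ in the interior case. You are in fact more careful than the paper on one point: the paper simply asserts that the union of the traces in $C$ is (the image of) a trace and that compactness of traces yields a minimum, whereas you correctly flag that parametrising the union $U$ as a genuine dipath of $\Xr$ is the only nontrivial topological step, requiring the closedness of the order and some compactness or completeness argument.
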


This shows that \added{complete }maximal chains in the trace poset are independent of the parametrisation we choose for our maximal traces. Indeed, given a maximal trace $\dual f$ the first item above states that there is only one chain from the initial or terminal point, which can be obtained from \emph{any} parametrisation $f$ of $\dual f$. The second item states that if we are considering extensions from an interior point $\alpha$, the maximal chains from $\alpha$ to $\dual f$ correspond to directed paths in the cartesian square $[0,1] \times [0,1]$. These directed paths correspond to equivalence classes of pairs $\gamma_+, \gamma_-$ under the equivalence relation which identifies pairs which visit the same pairs of extremal points.

\replaced{As a result, 
the $\Real$-persistence vector space defined in Section~\ref{sec:persistenttrace} from parametrisations $f,\gamma_-, \gamma_+$ coincides with the restriction of the $n$th natural homology of $\Xr$ 
to a given maximal chain:
}{To make the construction above more clear, we will show explicitly how to find the homology along a trace by restriction of the natural homology diagram. Recall that in a pospace $\Xr$, the trace poset $\tpos{\Xr}$ and the factorisation category of the trace category $\Fact{\diP{X}}$ are isomorphic, so we may view natural homology $\sysh{n}{\Xr}$ as a functor on $\tpos{\Xr}$ with values in the category $\ab$.
In practice, persistent homology is defined with coefficients in $\Real$ or in a field $\k$, whereas natural homology generally considers more general abelian group coefficients. We will restrict in all practical cases to homology with coefficients in $\k$ so as to make comparisons possible.}



\begin{proposition}
\label{prop:1}
Let $\Xr = (X,dX)$ be a compact pospace and $\dual f$ a trace in $\Xr$. Let $C$ be a complete maximal chain in the interval 
$[\alpha_f , \dual f]$ in the poset of traces. Let $(f_i)_{i\in \Real}$ be obtained from $\dual f$ as described in Section~\ref{sec:persistenttrace}.
Restricting the natural homology functor $\sysh{n}{\Xr}$ to $C$, we obtain the persistence $\Real$-vector space 
$\sysh{n}{\Xr}_{f_i}$. 
\end{proposition}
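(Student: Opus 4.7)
The plan is to assemble the claim directly from Proposition~\ref{P:FactisPos} together with the functoriality of natural homology. Since $\Xr$ is a pospace, Proposition~\ref{P:FactisPos} identifies $\Fact{\diP{\Xr}}$ with the trace poset $\tpos{\Xr}$, so natural homology $\sysh{n}{\Xr}$ may be regarded as a functor from the poset category $\tpos{\Xr}$ to $\ab$. Fixing coefficients in the field $\k$, as discussed just before the statement, we view $\sysh{n}{\Xr}$ as a functor $\tpos{\Xr} \fl \k\text{-}\Vect$.

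First, I would note that the chain $C \subseteq [\alpha_f, f] \subseteq \tpos{\Xr}$ is a totally ordered sub-poset, and the inclusion $C \hookrightarrow \tpos{\Xr}$ is a functor of poset categories. Pre-composing with this inclusion yields a functor
\[
\sysh{n}{\Xr}|_C : C \fl \k\text{-}\Vect,
\]
which, by the definition recalled in Section~\ref{S:Sketch}, is a $C$-persistence $\k$-vector space.

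Next, I would verify that the parametrisation $(f_i)_i$ constructed in Section~\ref{sec:persistenttrace} indeed defines a monotone map from the indexing poset ($\N$, or $\R$ after completion by constants on negative parameters and on parameters $\geq 1$) into $C$. This is essentially by construction: each ${}_s f_s$ is the restriction of $f$ to an interval $[t_0 - \gamma_-(s),\, t_0 + \gamma_+(s)]$, and since $\gamma_-, \gamma_+$ are monotone continuous reparametrisations of $[0,1]$, the intervals grow with $s$. Consequently ${}_s f_s$ is obtained from ${}_{s'} f_{s'}$ by pre- and post-concatenation whenever $s' \leq s$, so $f_{s'} \leq f_s$ in $\tpos{\Xr}$.

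Composing this monotone parametrisation with $\sysh{n}{\Xr}|_C$ then produces the desired $\k$-persistence vector space $i \mapsto \sysh{n}{\Xr}_{f_i}$. The main technical point — which is really just bookkeeping — is that by Proposition~\ref{P:FactisPos} there is at most one extension morphism between any two comparable traces, so the transition maps in the resulting persistence module are canonically determined and functoriality is automatic; the only care needed is to check that the completion from an $\N$-persistence module to an $\R$-persistence module via constant extensions is compatible with this functorial structure, which it is since constant persistence modules correspond to identity transitions.
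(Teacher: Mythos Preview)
Your proposal is correct and follows exactly the approach the paper intends: the proposition is stated in the paper without an explicit proof, immediately after the paragraph recalling that Proposition~\ref{P:FactisPos} lets one view $\sysh{n}{\Xr}$ as a functor on the poset $\tpos{\Xr}$, so restriction to a chain is by definition a persistence $\k$-vector space. Your write-up simply spells out this implicit argument in detail, including the monotonicity of the parametrisation and the passage from $\N$- to $\R$-persistence, which is faithful to the paper's treatment.
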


\subsection{Example: persistent homology of the matchbox}
\label{ex:persmatchbox}

Consider the case of the matchbox example presented in Section~\ref{SS:MatchboxExample}. 
Let us calculate the persistent homology along each of its maximal traces, starting at the initial point and extending into the future. We fix some field $\k$ and will proceed via the method developed in~\cite{Zomorodian:2005aa}. To each of the maximal traces $\zeta,\epsilon,\alpha,\beta,\delta,\gamma$ pictured in \eqref{E:SimplicialSet2}, we associate a sequence of subtraces corresponding to a decomposition of the trace along each $1$-simplex in the matchbox. For example, the trace $\alpha$ is decomposed into the chain

\medskip

\begin{center}
$\alpha_0 \leq \alpha_1 \leq \alpha_2 \leq \alpha_3 = \alpha$,
\qquad
\raisebox{-0.5cm}{
\begin{tikzpicture}[scale=1.2,tdplot_main_coords]
    \coordinate (O) at (0,0,0);
    \tdplotsetcoord{P}{1.414213}{54.68636}{45}

    \draw[->,thick,fill=gray!50,fill opacity=0.3] (O) -- (Py);
    \draw[->,thick,fill=gray!50,fill opacity=0.3] (Py) -- (Pxy);
    \draw[->,thick,fill=gray!50,fill opacity=0.3] (Pxy) -- (P);

    \draw[dashed,fill=gray!50,fill opacity=0.1] (O) -- (Py) -- (Pyz) -- (Pz) -- cycle;
    \draw[dashed,fill=yellow,fill opacity=0.1] (O) -- (Px) -- (Pxz) -- (Pz) -- cycle;
    \draw[dashed,fill=green,fill opacity=0.1] (Pz) -- (Pyz) -- (P) -- (Pxz) -- cycle;
    \draw[dashed,fill=red,fill opacity=0.1] (Px) -- (Pxy) -- (P) -- (Pxz) -- cycle;
    \draw[dashed,fill=magenta,fill opacity=0.1] (Py) -- (Pxy) -- (P) -- (Pyz) -- cycle;
\end{tikzpicture}
}
\end{center}

\medskip

\noindent where $\alpha_0$ is the constant path equal {to $(0,0,0)$, which $\alpha_1$ extends to $(0,1,0)$.} This trace is further extended to $(0,1,1)$ obtaining $\alpha_2$, and then finally extended to $(1,1,1)$ giving the total trace $\alpha$. A similar decomposition, which we will denote with the same indices, can be found for each of the maximal traces.
This sequence of traces gives a filtration of the simplicial complex pictured above. In the case of $\alpha$, we obtain the following filtration:
\[
\begin{tabular}{|c|c|c|c|}
\hline
\xymatrix@C=3mm@R=2mm{
& 
{}
& 
{}
& 
\\
{}
& & & 
{}
\\
&
*+<10pt>[o][F]{\scriptstyle \alpha_0}
&
{}
&
}
&
\xymatrix@C=3mm@R=2mm{
& 
{}
& 
{}
& 
\\
{}
& & & 
{}
\\
&
*+<10pt>[o][F]{\scriptstyle \alpha_1}
&
{}
&
}
&
\xymatrix@C=3mm@R=2mm{
& 
{}
& 
{}
& 
\\
{}
& & & 
{}
\\
&
*+<10pt>[o][F]{\scriptstyle \alpha_2}
&
*+<10pt>[o][F]{\scriptstyle \zeta_2} 
&
}
&
\xymatrix@C=3mm@R=2mm{
& 
*+<10pt>[o][F]{\scriptstyle \gamma}
	\ar@{-}[r]^{A}
& 
*+<10pt>[o][F]{\scriptstyle \delta}
	\ar@{-}[dr]^{C}
& 
\\
*+<10pt>[o][F]{\scriptstyle \beta}
	\ar@{-}[ur]^{B}
	\ar@{-}[dr]_{E}
& & & 
*+<10pt>[o][F]{\scriptstyle \epsilon}
	\ar@{-}[ul]_{C}
	\ar@{-}[dl]^{D}
\\
&
*+<10pt>[o][F]{\scriptstyle \alpha}
&
*+<10pt>[o][F]{\scriptstyle \zeta} 
&
}
\\
\hline
$K(\alpha_0)$
&
$K(\alpha_1)$
&
$K(\alpha_2)$
&
$K(\alpha_3)$
\\
\hline
\end{tabular}
\]

We extend this filtration into a $\N$-persistence simplicial complex by considering copies of $K(\alpha_3)$ for all $i \geq 4$.
We denote by $C_*(K(\alpha_i))$ the chain complex over $\k$ obtained from the simplicial complex $K(\alpha_i)$, obtaining a $\N$-persistence chain complex
\[
\xymatrix{
C_*(K(\alpha_0))
	\ar[r]^-{f_{0,1}}
&
C_*(K(\alpha_1))
	\ar[r]^-{f_{1,2}}
&
C_*(K(\alpha_2))
	\ar[r]^-{f_{2,3}}
&
C_*(K(\alpha_3))
	\ar[r]^-{f_{3,4}}
&
\cdots
}
\]
where $f_{n,n+1}$ are induced by the inclusions of simplicial sets given by the filtration.
For each natural number $p$, we denote by $H_p^i$ the $p^{th}$ homology group of $C_*(K(\alpha_i))$, thus obtaining a sequence of homology groups
\[
\xymatrix{
H_p^0
	\ar[r]^-{\phi_{0,1}}
&
H_p^1
	\ar[r]^-{\phi_{1,2}}
&
H_p^2
	\ar[r]^-{\phi_{2,3}}
&
H_p^3
	\ar[r]^-{\phi_{3,4}}
&
H_p^3
	\ar[r]^-{\phi_{4,5}}
&
\cdots
	\ar[r]^-{\phi_{n-1,n}}
&
H_p^3
	\ar[r]^-{\phi_{n,n+1}}
&\cdots,
}
\]
where the $\phi_{n,n+1}$ are identities for $n \geq 3$. This is in fact an $\N$-persistence $\k$-vector space. We define a non-negatively graded module over $\k[t]$ by setting
\[
H_p := \left( \bigoplus_{i=0}^2 H_p^i \right) \oplus \left( \bigoplus_{i=3}^\infty H_p^3 \right),
\]
and defining the action of $t$ by $t \cdot (h_i)_{i} = (\phi_{i,i+1}(h_i))_i$, where the $h_i$ belongs to $H_p^i$. 

We will calculate the graded module of persistent homology via matrix representations of the boundary maps $\partial_k$ associated to the persistence chain complex $C_*(K(\alpha_i))$ as described in~\cite{Zomorodian:2005aa}.
We calculate $H_0(\alpha)$, the $0^{th}$ persistent homology along $\alpha$. For this, we fix homogeneous bases for $Z_0$ and $\mathcal C_1$. Since $Z_0 = \mathcal C_0$, we may take the standard basis in both cases. Thus, for $\mathcal C_0$ we obtain the basis $\{\alpha_0, \zeta_2, \beta, \gamma, \delta, \epsilon \}$, and for $\mathcal C_1$ we obtain the basis $\{A,B,C,D,E\}$. We now calculate the matrix of $\partial_1$ with respect to these bases, taking care to order the basis of $\mathcal C_0$ in reverse degree order:
\[
\begin{tabular}{c|ccccc}
{}
&
$B$
&
$A$
&
$C$
&
$D$
&
$E$
\\
\hline
$\beta$ & -1 & 0 & 0 & 0 & 1
\\
$\gamma$ & 1 & -1 & 0 & 0 & 0
\\
$\delta$ & 0 & 1 & -1 & 0 & 0
\\
$\epsilon$ & 0 & 0 & 1 & -1 & 0
\\
$\zeta_2$ & 0 & 0 & 0 & $t$ & 0
\\
$\alpha_0$ & 0 &0 &0 &0 & $-t^3$
\end{tabular}
\]
We now calculate the column-echelon form of the above matrix, obtaining
\[
\begin{tabular}{c|ccccc}
{}
&
$B$
&
$A$
&
$C$
&
$D$
&
$E'$
\\
\hline
$\beta$ & \fbox{-1} & 0 & 0 & 0 & 0
\\
$\gamma$ & 1 & \fbox{-1} & 0 & 0 & 0
\\
$\delta$ & 0 & 1 &  \fbox{-1} & 0 & 0
\\
$\epsilon$ & 0 & 0 & 1 &  \fbox{-1} & 0
\\
$\zeta_2$ & 0 & 0 & 0 & $t$ &  \fbox{$t$}
\\
$\alpha_0$ & 0 &0 &0 &0 & $-t^3$
\end{tabular}
\]
where $E' = A+B+C+D+E$. 
In the case of the persistent homology along $\alpha$, we see that the first four rows contribute nothing to the description of $H_0(\alpha)$, and that the last two contribute $\Sigma^2 \k[t]/ t$ and $\K[t]$ respectively, \ie
\[
H_0(\alpha) \cong \k \oplus \k \oplus \k^2 \oplus \k \oplus \cdots \oplus \k \oplus \cdots
\]

We obtain a similar result for the persistent homology along $\zeta$. Below are the standard and column-echelon forms of $\partial_1$ in this case:
\[
\begin{minipage}{0.5\textwidth}
\begin{tabular}{c|ccccc}
{}
&
$B$
&
$A$
&
$C$
&
$D$
&
$E$
\\
\hline
$\beta$ & -1 & 0 & 0 & 0 & 1
\\
$\gamma$ & 1 & -1 & 0 & 0 & 0
\\
$\delta$ & 0 & 1 & -1 & 0 & 0
\\
$\epsilon$ & 0 & 0 & 1 & -1 & 0
\\
$\zeta_2$ & 0 & 0 & 0 & 0 & $-t$
\\
$\alpha_0$ & 0 &0 &0 & $t^3$ & 0
\end{tabular}
\end{minipage}
\begin{minipage}{0.5\textwidth}
\begin{tabular}{c|ccccc}
{}
&
$B$
&
$A$
&
$C$
&
$D$
&
$E'$
\\
\hline
$\beta$ & \fbox{-1} & 0 & 0 & 0 & 0
\\
$\gamma$ & 1 & \fbox{-1} & 0 & 0 & 0
\\
$\delta$ & 0 & 1 & \fbox{-1} & 0 & 0
\\
$\epsilon$ & 0 & 0 & 1 & \fbox{-1} & 0
\\
$\zeta_2$ & 0 & 0 & 0 & 0 & \fbox{$-t$}
\\
$\alpha_0$ & 0 &0 &0 & $t^3$ & $t^3$
\end{tabular}
\end{minipage}
\]
We therefore obtain the same isomorphism class for $H_0(\zeta)$ as we did in the case of $\alpha$:
\[
H_0(\zeta) \cong \k \oplus \k \oplus \k^2 \oplus \k \oplus \cdots \oplus \k \oplus \cdots
\]

The other four dipaths yield a simple persistent homology since there is only one class which persists throughout the sequence. For example, along $\beta$, a homogeneous basis of $C_0$ is $\{\beta_0, \gamma_2, \epsilon,\zeta, \delta, \alpha \}$. The basis for $C_1$ is the same, but now $B$ has degree two rather than three. We obtain the following matrix representation of $\partial_1$:
\[
\begin{minipage}{0.5\textwidth}
\begin{tabular}{c|ccccc}
{}
&
$E$
&
$A$
&
$C$
&
$D$
&
$B$
\\
\hline
$\alpha$ & $-1$ & 0 & 0 & 0 & 0
\\
$\delta$ & 0 & 1 & $-1$ & 0 & 0
\\
$\zeta$ & 0 & 0 & 0 & 1 & 0
\\
$\epsilon$ & 0 & 0 & 1 & $-1$ & 0
\\
$\gamma_2$ & 0 & $-t$ & 0 & 0 & 1
\\
$\beta_0$ & $t^3$ & 0 &0 & 0 & $-t^2$
\end{tabular}
\end{minipage}
\begin{minipage}{0.5\textwidth}
\begin{tabular}{c|ccccc}
{}
&
$E$
&
$A$
&
$D$
&
$A+C$
&
$B$
\\
\hline
$\alpha$ & \fbox{$-1$} & 0 & 0 & 0 & 0
\\
$\delta$ & 0 & \fbox{1} & $0$ & 0 & 0
\\
$\zeta$ & 0 & 0 & \fbox{1} & 0 & 0
\\
$\epsilon$ & 0 & 0 & $-1$ & \fbox{$1$} & 0
\\
$\gamma_2$ & 0 & $-t$ & 0 & $-t$ & \fbox{1}
\\
$\beta_0$ & $t^3$ & 0 &0 & 0 & $-t^2$
\end{tabular}
\end{minipage}
\]
We see that the graded module $H_0(\beta)$ is simply $\k[t]$, since all pivots are of degree zero and there is one non-pivot row.

\subsection{Natural homology of the matchbox}

\label{sec:nathommatchbox}

Now we describe the natural homology diagram of the matchbox. First we will describe its restriction to the principal upset in $\tpos{\Xr}$ given by the constant path at the initial point $(0,0,0)$. The following diagram depicts the Hasse diagram of this upset:
\[
\begin{tikzpicture}[nodes={draw, thick}]
\node[circle] (initial) at (0,-1) {$0$};
\node[circle] (c1) at (0,1) {$\gamma_1 = \delta_1$};
\node[circle] (a1) at (-5,1) {$\alpha_1 = \beta_1$};
\node[circle] (e1) at (5,1) {$\epsilon_1 = \zeta_1$};
\node[circle] (b2) at (-4,3) {$\beta_2$};
\node[circle] (c2) at (-3,3) {$\gamma_2$};
\node[circle] (d2) at (3,3) {$\delta_2$};
\node[circle] (e2) at (4,3) {$\epsilon_2$};
\node[circle] (a2) at (-.5,3) {$\alpha_2$};
\node[circle] (z2) at (.5,3) {$\zeta_2$};
\node[circle] (b3) at (-5,5) {$\beta_3$};
\node[circle] (c3) at (-3,5) {$\gamma_3$};
\node[circle] (d3) at (3,5) {$\delta_3$};
\node[circle] (e3) at (5,5) {$\epsilon_3$};
\node[circle] (a3) at (-1,5) {$\alpha_3$};
\node[circle] (z3) at (1,5) {$\zeta_3$};
\draw [thick] (initial) to (c1);
\draw [thick] (initial) to (a1);
\draw [thick] (initial) to (e1);
\draw [thick] (a1) to [out=0, in=-120] (a2);
\draw [thick] (e1) to [out=-180, in=-60] (z2);
\draw [thick] (a1) to [out=90, in=-120] (b2);
\draw [thick] (e1) to [out=90, in=-60] (e2);
\draw [thick] (c1) to [out=-180, in=-90] (c2);
\draw [thick] (c1) to [out=0, in=-90] (d2);
\draw [thick] (b2) to (b3);
\draw [thick] (a2) to (a3);
\draw [thick] (c2) to (c3);
\draw [thick] (d2) to (d3);
\draw [thick] (e2) to (e3);
\draw [thick] (z2) to (z3);
\draw[thick, dotted] (0,-1) ellipse (1.5em and 1.5em);
\draw[thick, dotted] (c1) ellipse (2.5em and 2.5em);F
\draw[thick, dotted] (-5,1) ellipse (2.5em and 2.5em);
\draw[thick, dotted] (5,1) ellipse (2.5em and 2.5em);
\draw[thick, dotted] (-3.5,3) ellipse (4em and 2em);
\draw[thick, dotted] (3.5,3) ellipse (4em and 2em);
\draw[thick, dotted] (0,3) ellipse (4em and 2em);
\draw[thick, dotted] (0,5) ellipse (18em and 3em);
\end{tikzpicture}
\]
Traces in the same \replaced{dotted}{red} circle yield the same trace space, \ie have the same beginning and end points. Each line corresponds to an extension. The natural homology diagram\added{, in which copies of $\k$ corresponding to the same trace spaces have been identified,} is depicted below, the arrows being induced by extensions:
\[
\xymatrix@C=6em{
&
\k
&
\\
\k
	\ar[ur]
&
\k^2
	\ar[u]
&
\k
	\ar[ul]
\\
\k
	\ar[u]
	\ar[ur]
&
\k
	\ar[ul]
	\ar[ur]
&
\k
	\ar[u]
	\ar[ul]
\\
&
\k
	\ar[ur]
	\ar[u]
	\ar[ul]
&
}
\]
Each path of length $3$ in the above diagram corresponds to the persistence vector space obtained by taking the persistent homology along one of the maximal traces. All maps from $\k$ to $\k$ depicted in the diagram are identities, the two maps from $\k$ to $\k^2$ are canonical injection into the first (resp. the second) component of $\k^2$. The map from $\k^2$ to $\k$ is the sum of the two components of $\k^2$.

Comparing the above with the Hasse diagram of the trace poset of the matchbox, it is clear that \replaced{``}{''}glueing\replaced{''}{``} the persistence modules along chains while taking end-points into account relates the uni-dimensional persistence complexes to natural homology. This will be made explicit in the next section.

\section{Simultaneous persistence of directed spaces}

\label{sec:simulpers}

In this section, we give methods for amalgamating information from persistent homology along traces.
We recall constructions in the category of posets which will be useful when using persistent homology along traces to recover natural homology.
We will denote by $\Pos$ the category of posets and order preserving maps, and by $\Posin$ the wide subcategory with order preserving inclusions.

\subsection{Colimits of chains in posets}
\label{ss:colimitschains}

In order to reconstruct natural homology from persistent homology along traces, we must amalgamate information from each chain in the trace poset. For this reason, we discuss here how to \added{re}construct a poset from its (maximal) chains.

Firstly, notice that in general, using exclusively maximal chains is not enough to reconstruct a poset. Indeed, consider the poset whose Hasse diagram is depicted below:
\[
\xymatrix{
&z&\\
y_1
	\ar[ur]
&
&
y_2
	\ar[ul]
\\
&x
	\ar[ur]
	\ar[ul]
&
}
\]
Its maximal chains are $(x,y_1,z)$ and $(x,y_2,z)$. In order to obtain the whole poset as a colimit, we additionally need the inclusions of $x$ and $z$ into these chains. 

While a poset is in general not the colimit of its maximal chains, it is the colimit of all of its chains.
Given a poset $P$, we consider the thin subcategory $\Ch_P$ of $\Posin$ consisting of the chains of $P$ and their inclusions, called the \emph{poset of chains}. The \emph{diagram of chains} associated to $P$ is the inclusion functor $\dCh_P: \Ch_P \fl \Pos$. \replaced{The following result is well-known, we include a short proof for completeness:}{We have:}{}
\begin{proposition}
\label{prop:colimchain}
For any poset $P$,
\[
\colim {} \, \dCh_P = P.
\]
\end{proposition}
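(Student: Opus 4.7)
The plan is to verify directly that the family of subset inclusions $\{\iota_C \colon C \hookrightarrow P\}_{C \in \Ch_P}$ exhibits $P$ as the colimit of $\dCh_P$ in $\Pos$. Since the diagram $\dCh_P$ sends each chain to itself and each morphism of $\Ch_P$ to the corresponding subset inclusion, the cocone condition is immediate: whenever $C \subseteq C'$ is a morphism of $\Ch_P$, the equality $\iota_{C'} \circ \iota_{C,C'} = \iota_C$ holds on the nose.

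The substantive step is the universal property. Given a cocone $(Q, \{\phi_C \colon C \to Q\}_{C \in \Ch_P})$ in $\Pos$, I would define the mediating map $\bar\phi \colon P \to Q$ by setting $\bar\phi(x) := \phi_{\{x\}}(x)$, using that each singleton $\{x\}$ is a chain. Compatibility of the cocone with the inclusion $\{x\} \subseteq C$ for any larger chain $C$ containing $x$ forces $\phi_C(x) = \bar\phi(x)$, so $\bar\phi$ extends every $\phi_C$ in a unique way. Conversely, any order-preserving map factoring the cocone must agree with $\phi_{\{x\}}$ on $\{x\}$, giving uniqueness of $\bar\phi$.

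To show $\bar\phi$ is order-preserving, the key observation is that whenever $x \leq y$ in $P$, the set $\{x,y\}$ is itself a chain of $P$, hence an object of $\Ch_P$. The map $\phi_{\{x,y\}}$ is order-preserving by hypothesis, and by the compatibility already established it satisfies $\phi_{\{x,y\}}(x) = \bar\phi(x)$ and $\phi_{\{x,y\}}(y) = \bar\phi(y)$, whence $\bar\phi(x) \leq \bar\phi(y)$.

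There is no real obstacle here: the only content beyond a formal unfolding of the universal property is the remark that the order on $P$ is completely encoded by its one- and two-element chains, which is what makes the construction of $\bar\phi$ both possible and forced. The same proof strategy will apply, with minor adjustments, to the maximal-chain variant in Proposition~\ref{prop:colimmaxchain}, provided one first argues that every element and every comparable pair is covered by some maximal chain.
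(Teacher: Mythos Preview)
Your argument is correct. You verify the universal property of the colimit directly: the inclusions $\iota_C \colon C \hookrightarrow P$ form a cocone, and for any competing cocone you build the unique mediating map using that singletons and two-element chains already sit inside $\Ch_P$. This is clean and complete.

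The paper takes a different route. Rather than checking the universal property against an arbitrary cocone, it computes the colimit concretely in $\Pos$ as a coproduct of all chains followed by the identification of common subchains, and then observes that the resulting poset is $P$ itself. Your approach is more economical and arguably more transparent: it isolates the exact reason the result holds, namely that every element of $P$ is a chain and every order relation $x \leq y$ is witnessed by the chain $\{x,y\}$. The paper's explicit construction, by contrast, makes the colimit tangible but leaves the identification step somewhat informal. Either method works; yours would transfer more readily to Proposition~\ref{prop:colimmaxchain}, though note that there the indexing category is not just the maximal chains but their completion under (quasi-)pullbacks, so ``every comparable pair is covered by some maximal chain'' is necessary but not quite sufficient---you also need morphisms in the diagram that let you compare two maximal chains at a shared element.
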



\begin{proof}
$P$ is clearly a co-cone for $\dCh_P$. Let $(f_C : C \fl Q)_{C \in \Ch_P}$ be another. Define a map $f: P \fl Q$ by $f(x) = f_{\{x\}}(x)$.
Then $f_{\mid C}=f_C$ for all $C \in \Ch_{P}$. 
Indeed, since, for all $x \in C$, $\{x\}$ is a sub-chain of $C$, we have $f_{x}(x) = f_C(x)$, so $f$ commutes with all maps of the co-cone. 
Consider $x,y \in P$ such that $x \leq y$. 
Then $f(x) = f_{\{x,y\}}(x) \leq f_{\{x,y\}}(y) = f(y)$, meaning that $f$ is a morphism of posets. 
Given another such morphism of posets $g: \ P \rightarrow Q$ which commutes with maps of the co-cone, we have $g(x)=f_{x}(x)=f(x)$ for all $x\in P$ and hence $g=f$.
\end{proof}

When considering exclusively maximal chains, we may obtain $P$ by adding their intersections to the diagram. 
Consider a poset $P$ and two maximal chains $C_1, C_2$ of $P$. The pullback of $C_1, C_2$ in $\Pos$ corresponds to the full sub-poset of~$P$ given by the intersection $C_1 \cap C_2$, which is also a chain. Denote by~$\pmCh_P$ the subcategory of $\Posin$ consisting of maximal chains in $P$ and their pullbacks. By maximality of the considered chains, this category looks like a zig-zag.

\begin{proposition}
\label{prop:colimmaxchain}
A poset $P$ is the colimit of the diagram of its maximal chains $\pmCh_P \fl \Pos$.
\end{proposition}
\begin{proof}
This follow the same schema as the proof of Proposition~\ref{prop:colimchain}, but with a different map $f$. Consider again another co-cone $(f_C : C \fl Q)_{C \in \Ch_P}$. Define the map $f$ by sending $x\in P$ to $f_C(x)$ if $x$ is an element of only one maximal chain $C$, and to $f_{C_1 \cap C_2}(x)$ if $x \in C_1 \cap C_2$. The map $f$ commutes with the co-cone maps and is unique by construction. It is a morphism of posets since if $x \leq y$, then $x$ and $y$ are elements of the same maximal chain(s) and hence $f(x) = f_C(x) \leq f_C(y) = f(y)$ where $C$ is either a maximal chain or the intersection of two maximal chains.
\end{proof}


\subsection{Application to diagrams}

\label{sec:applidiag}

Now that we have assembled information on colimits of chains, we will see how these constructions carry over to diagrams over subposets. We fix a category~$\V$.

First, we introduce a category representing persistence objects of a certain type $\V$, without fixing the indexing poset. Specifically, this category, denoted by $\Pers{\V}$, has for objects pairs $(P, F: P \fl \V)$ where $P$ is a poset, \ie $F$ is a $P$-persistence object in $\V$. A morphism from $(P, F: P \fl \V)$ to $(Q, G: Q \fl \V)$ is a pair $(f, \sigma)$ where $f$ is a morphism $P \fl Q$ of $\Pos$ and $\sigma$ is a natural transformation \replaced{$F \Rightarrow G \circ f$}{$F \Rightarrow G \circ \phi$:}
\[
\xymatrix@R=1.5em{
P
	\ar[rr] ^-{f}
	\ar[ddr] _-{F}
&
&
Q
	\ar[ddl] ^-{G}
\\
&
\ar@<+0.95ex>@{}[ur]^(-.25){}="a"^(.45){}="b" 
\ar@{=>}^-{\sigma} "a";"b"
&
\\
&
\V
&
}
\]
Composition of morphisms is given by $(g, \tau) \circ (f, \sigma)$ = $(g \circ f, \tau_{f} \circ \sigma)$, and the identity on $(P, F)$ is the pair $(1_P , id_F)$.

Consider the full sub-category of $\opnat{\V}$, as defined in Section \ref{sec:tracecat}, consisting of natural systems $(\Cr, D)$ with values in $\V$ such that $\Fact\Cr$ is a poset. This category naturally embeds in the category of diagrams $\Pers{\V}$. 

Let $P$ be a poset and an inclusion $G : \Phi \fl \Posin$, where $\Phi$ is a category of sub-posets of $P$ and their inclusions such that $\mathrm{colim}_\Phi G = P$. Suppose that we are given a functor $D : P \fl \V$, \ie an object of $\Pers{\V}$. Since the colimit of~$G$ is $P$, for each $\phi$ in $\Phi$, we have an inclusion $i_\phi: \phi \fl P$. Using these inclusions, we define a functor $F : \Phi \fl \Pers{\V}$, sending each $\phi$ to the restriction of $D$ to $\phi$, denoted by $F_\phi := (i_\phi)^*D$. 

Given an inclusion $i_{\phi, \phi'} : \phi \fl \phi'$ in  $\Phi$, the functor $F$ induces the morphism of persistence objects given by $F_{\phi, \phi'} = (i_{\phi,\phi'}, id_{F_\phi})$, where the latter is the identity natural transformation on $F_{\phi}$ since $F_{\phi'} \circ i_{\phi,\phi'} = F_\phi$.

Before proving the main technical result of this section, we prove useful properties of the colimit construction in $\Posin$. Recall that given posets $P_1, P_2$ and $p_i \in P_i$, a pointed inclusion of posets $i : (P_1,p_1) \fl (P_2, p_2)$ is an inclusion $i : P_1 \fl P_2$ such that $i(p_1) = p_2$. Additionally, we recall that given $p,p' \in P$, we say that $p$ is a lower cover of $p'$ when $p\leq 
 p'$, and any other element $q\in P$ such that $p\leq q \leq p'$ is equal to either $p$ or $p'$. We have the following:
\begin{lemma}
\label{lemma:Colimits}
Let $P$, $\Phi$ and $G$ be as defined in the above paragraph. We have
\begin{enumerate}
\item For any $p \in P$, there exists a sub-poset $\phi$ in $\Phi$ and an element $\pi \in \phi$ such that $i_\phi(\pi) = p$.
\item For any pair $(\phi,\pi)$, $(\phi', \pi')$ such that $i_\phi(\pi) = i_{\phi'}(\pi')$, there exists a zig-zag of pointed inclusions in $\Phi$ as pictured below:
\[
\xymatrix{
{(\phi, \pi)}
&
(\phi_1, \pi_1)
    \ar[r]
    \ar[l]
&
\cdots
&
(\phi_n, \pi_n)
    \ar[r]
        \ar[l]
&
(\phi', \pi').
}
\]
\item For any pair $p,p' \in P$ such that $p'$ is a cover of $p$, there exists $\phi$ in $\Phi$ and $\pi, \pi' \in \phi$ such that $pi\leq \pi'$, $i_\phi(\pi)=p$ and $i_\phi(\pi')=p'$.
\end{enumerate}
\end{lemma}
\begin{proof}
For the first point, if no such pair $(\phi, \pi)$ exists, the poset $P'$ obtained by removing $p$ from $P$ would be a co-cone under $G$, contradicting the assumption that~$P$ is the colimit.

For the second, if no such zig-zag exists, then the poset $P'$ obtained by creating two distinct copies of $p$ would be a co-cone for $G$, but there are two inclusions $P \fl P'$, contradicting the assumption that $P$ is the colimit.

Finally, the third item holds since if no such $\phi$ exists, then the the poset $P'$ defined by $P' = (P, \leq_P \setminus \{(p,p')\})$ would be a co-cone for $G$, again providing a contradiction.
\end{proof}

\begin{proposition}
\label{Proposition:Colimits}
Let $P$, $G$, $D$ and $F$ be as defined in the above paragraph. We have
\[
\colim{\Pers\V} F = D.
\]
\end{proposition}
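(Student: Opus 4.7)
The plan is to verify directly that $(P, D)$, together with canonical maps, satisfies the universal property of a colimit of $F$ in $\Pers\Cr$. First I would exhibit the cocone: for each $p \in \Phi$, the inclusion $i_p : P_p \hookrightarrow P$ together with the identity natural transformation $(i_p)^*D \Rightarrow D \circ i_p$ yields a morphism $\iota_p := (i_p, 1) : F_p \to (P, D)$ in $\Pers\Cr$. Compatibility of the family $\{\iota_p\}_{p \in \Phi}$ follows from $i_{p'} \circ i_{p,p'} = i_p$ together with the fact that the natural transformation components of $F_{p,p'}$ are identities.

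Next I would establish the universal property. Fix a competing cocone $\bigl((Q, E), \{(\psi_p, \tau_p)\}_{p \in \Phi}\bigr)$. The order-preserving map $\psi : P \to Q$ is obtained by applying the universal property of $\colim_\Phi G = P$ in $\Pos$ to the compatible family $\{\psi_p\}$: this gives a unique $\psi$ with $\psi \circ i_p = \psi_p$. For the natural transformation $\sigma : D \Rightarrow E \circ \psi$, I would define $\sigma_x := (\tau_p)_x$ for any $p$ with $x \in P_p$. Uniqueness of $(\psi, \sigma)$ factoring through the cocone then follows essentially for free: $\psi$ is forced by the colimit in $\Pos$, and each $\sigma_x$ is forced by the requirement that composition with $\iota_p$ recover $(\psi_p, \tau_p)$.

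The main obstacle is verifying that $\sigma$ is well defined and natural. Well-definedness requires showing that if $x \in P_p \cap P_{p'}$, then $(\tau_p)_x = (\tau_{p'})_x$. Since $P$ is the colimit in $\Pos$ of a diagram of inclusions, any two occurrences of the element $x$ in the $P_p$'s are related by a zig-zag in $\Phi$; along each leg of this zig-zag the cocone equation $(\psi_{p'}, \tau_{p'}) \circ F_{p,p'} = (\psi_p, \tau_p)$ forces the appropriate $\tau$-components to agree at $x$. For naturality, given $x \leq y$ in $P$, I would use that the order on $P$ (as a colimit in $\Pos$ of inclusions) is generated by the orders on the individual $P_p$; decomposing $x \leq y$ into a finite chain of relations each lying inside some $P_{p_i}$ reduces the naturality square for $(x, y)$ to a composition of the naturality squares for $\tau_{p_i}$, which hold by assumption.

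Finally I would check that $(\psi, \sigma)$ is indeed a morphism in $\Pers\Cr$ factoring the cocone, namely that $(\psi, \sigma) \circ \iota_p = (\psi_p, \tau_p)$ for each $p$. The poset part is immediate from $\psi \circ i_p = \psi_p$, and the natural transformation part is immediate from the pointwise definition $\sigma_x = (\tau_p)_x$ for $x \in P_p$, combined with the fact that $\iota_p$ has identity natural transformation component. This concludes the identification $\colim_{\Pers\Cr} F = D$.
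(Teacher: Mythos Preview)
Your proof is correct and follows essentially the same route as the paper: both verify the universal property by producing the poset map from the colimit of $G$ in $\Pos$, then use the zig-zag description of identified elements for well-definedness and the chain decomposition of the order on $P$ (the paper's equation for $[x]\leq[y]$) for naturality. Your version is in fact slightly more streamlined: having observed that the natural transformation components $F_{p,p'}^2$ are literal identities (since each $F_p$ is a restriction of the single functor $D$), you work directly with $(P,D)$ and define $\sigma_x := (\tau_p)_x$, whereas the paper first rebuilds $D$ as a functor $\Gamma$ via auxiliary pointwise colimits $c_{[x]}$ over each equivalence class, only to note that $c_{[x]} \cong D([x])$ anyway.
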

\begin{proof}
We show that $D$ satisfies the colimit property. Firstly, it is a co-cone for $F$ since by construction, for any morphism $i_{\phi, \phi'} : \phi \fl \phi'$ of $\Phi$, the following diagram commutes in $\Pers\V$:
\[
\xymatrix{
F_\phi
    \ar[rr] ^-{(i_{\phi,\phi'}, id_{F_\phi})}
    \ar[dr] _-{(i_{\phi}, id_{F_\phi})}
&
&
F_{\phi'}
    \ar[dl] ^-{(i_{\phi'}, id_{F_{\phi'}})}
\\
&
D.
&
}
\]

Now to show that it is universal. Let $\Delta: Q \fl \V$ be another co-cone for $F$, \ie for any $i_{\phi, \phi'} : \phi \fl \phi'$, we have morphisms $F_\phi \fl \Delta$ and $F_{\phi'} \fl \Delta$  in $\Pers\V$ such that the following diagram commutes
\begin{align}
\label{E:DeltaCocone}
\raisebox{0.75cm}{\xymatrix{
F_\phi
    \ar[rr] ^-{(i_{\phi,\phi'}, id_{F_\phi})}
    \ar[dr] _-{(\psi_\phi, \tau_{\phi})}
&
&
F_{\phi'}
    \ar[dl] ^-{(\psi_{\phi'}, \tau_{\phi'})}
\\
&
\Delta.
&
}}
\end{align}
In particular, this means that $Q$ is a co-cone for $G$, since the outer triangle in the following diagram commutes:
\[
\xymatrix{
\phi
    \ar[rr] ^-{i_{\phi,\phi'}}
    \ar@/_/[ddr] _-{\psi_\phi}
    \ar[dr] ^-{i_\phi}
&
&
{\phi'}
    \ar@/^/[ddl] ^-{\psi_{\phi'}}
    \ar[dl] _-{i_{\phi'}}
\\
&
P \ar[d] ^-{f}
&
\\
&
Q.
&
}
\]
In particular, we obtain a unique inclusion of posets $f:P\fl Q$ such that all triangles in the above diagram commute.

By $(1)$ of Lemma~\ref{lemma:Colimits}, we know that there exists $(\phi, \pi)$ such that $i_\phi(\pi) = p$. Given two such distinct sub-posets $(\phi, \pi)$, $(\phi', \pi')$ such that a morphism $i_{\phi, \phi'}$ exists between them, the hypothesis that $\Delta$ is a co-cone for $F$ implies that the following diagram commutes:
\[
\xymatrix{
F_{\phi}(\pi) 
    \ar@{=}[r]
    \ar[d] _-{\tau_\phi(\pi)}
&
D_p
&
F_{\phi'}(\pi')
    \ar@{=}[l]
    \ar[d] ^-{\tau_{\phi'}(\pi')}
\\
\Delta_{\psi_{\phi}(\pi)}
    \ar@{=}[r]
&
\Delta_{(f(p))}
&
\Delta_{\psi_{\phi'}(\pi')}
    \ar@{=}[l]
}
\]
Therefore we have
\[
\tau_\phi(\pi) = \tau_{\phi'}(i_{\phi,\phi'}(\pi))\circ id_{F_\phi}(\pi)
= \tau_{\phi'}(\pi')\circ 1_{F_\phi(\pi)}
= \tau_{\phi'}(\pi'),
\]
in which the first equality holds because diagram (\ref{E:DeltaCocone}) commutes, the second by definition, and the third because $F_\phi(\pi) = F_{\phi'}(\pi')$. 

If no such map $i_{\phi, \phi'}$ exists, then by $(2)$ of Lemma~\ref{lemma:Colimits}, $\phi$ and $\phi'$ must be connected by a zig-zag of such inclusions. By induction on the length of such a zig-zag and using the above argument, we see that the equality $\tau_\phi(\pi) = \tau_{\phi'}(\pi')$ holds for any pair $(\phi,\pi)$, $(\phi', \pi'$) such that $i_\phi(\pi) = p = i_{\phi'}(\pi')$. We denote this morphism of $\V$ by $\sigma_p : D_p \fl \Delta_{f(p)}$.
Now, we must show that $\sigma$ defines a natural transformation between $D$ and $\Delta\circ f$.

Consider $p,p' \in P$ such that $p$ is a lower cover of $p'$. By $(3)$ of Lemma~\ref{lemma:Colimits}, there exists $\phi$ in $\Phi$ and $\pi, \pi' \in \phi$ such that $pi\leq \pi'$, $i_\phi(\pi)=p$ and $i_\phi(\pi')=p'$. Since $\tau_\phi$ is a natural transformation between $F_\phi$ and $\Delta\circ\psi_\phi$, the diagram below on the left commutes, in which the vertical arrows are those induced by the inequality $\pi \leq\pi'$.
 \[
 \xymatrix{
 F_\phi(\pi)
    \ar[r] ^-{\tau_\phi(\pi)}
    \ar[d] 
&
\Delta_{\psi_\phi(\pi)} \ar[d]
\\
 F_\phi(\pi')
    \ar[r] _-{\tau_\phi(\pi')}
&
\Delta_{\psi_\phi(\pi')}
 }
\qquad\qquad
  \xymatrix{
 D_p
    \ar[r] ^-{\sigma_p}
    \ar[d] 
&
\Delta_{f(p)}
\ar[d]
\\
 D_{p'}
    \ar[r] _-{\sigma_{p'}}
&
\Delta_{f(p')}
 }
 \]
 This means the right-most diagram must commute, since the two are equal. Now, given any comparison $p\leq p'$ in $P$, there exists a chain $p \leq p_1 \leq \cdots \leq p_n \leq p'$ in which each of the comparisons is a cover. Using induction on the length of this chain and the above argument, we see that $\sigma$ is indeed a natural transformation.

To conclude, we have found a morphism $(f,\sigma): D \fl \Delta$ of $\Pers\V$ which commutes with the co-cone maps. This morphism is unique since $f$ was obtained by the colimit property of $P$, and, given any other candidate $\sigma'$ for the natural transformation, its components would have to coincide with the $\tau_\phi$ on appropriate sub-posets, meaning that it is equal to $\sigma$. In other words, $D$ is the universal co-cone for $F$, thereby concluding the proof.
\end{proof}

\subsection{Natural homology as a colimit}

\label{sec:nathomupset}

Using Proposition~\ref{Proposition:Colimits} and constructions similar to those explained above, we obtain the following results:

\begin{theorem}
\label{thm:1}
Let $\Xr = (X,dX)$ be a compact pospace\added{,} $\alpha$ a point in $X$\added{,} and $\Vect$ the category of $\k$-vector spaces.
\begin{enumerate}
\item The natural homology of $\Xr$ is the colimit in $\Pers{\Vect}$ of the persistent homology along each of its traces.
\item The natural homology of $\Xr$ is the colimit in $\Pers{\Vect}$ of the persistent homology of its maximal traces, seen as maximal chains in $\tpos \Xr$, completed with pullbacks.
\item The natural homology of the up-set of $\alpha$, seen as a constant trace, in $\tpos \Xr$ is the colimit in $\Pers{\Vect}$ of the persistent homologies along the maximal chains passing through $\alpha$, completed with pullbacks.
\end{enumerate}
\end{theorem}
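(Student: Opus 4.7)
The plan is to combine the ``poset-as-colimit-of-chains'' results of Section~\ref{ss:colimitschains} with the transport-along-functors result of Proposition~\ref{Proposition:Colimits}, applied to natural homology viewed as a persistence object via Proposition~\ref{P:FactisPos}. Since $\Vect$ is cocomplete, all the colimits required by Proposition~\ref{Proposition:Colimits} exist, and its hypotheses reduce to checking in each case that the source of the colimit diagram is a diagram of inclusions reconstructing $\tpos\Xr$.

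For item (1), I would first invoke Proposition~\ref{P:FactisPos} to view $\sysh{n}{\Xr}$ as an object $D : \tpos\Xr \fl \Vect$ of $\Pers{\Vect}$. By Proposition~\ref{prop:colimchain}, $\tpos\Xr = \colim{}\,\dCh_{\tpos\Xr}$, the colimit of the inclusion functor $G = \dCh_{\tpos\Xr} : \Ch_{\tpos\Xr} \fl \Posin$. Applying Proposition~\ref{Proposition:Colimits} with these data exhibits $D$ as the colimit, in $\Pers{\Vect}$, of the functor sending each chain $C$ to the restriction $(i_C)^{\ast}\sysh{n}{\Xr}$. By Proposition~\ref{prop:1}, each such restriction is precisely the persistent homology along a trace associated with $C$, completing the identification.

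For item (2), the same argument is run, but with Proposition~\ref{prop:colimchain} replaced by Proposition~\ref{prop:colimmaxchain}, so that the colimit diagram is either $\pmCh_{\tpos\Xr} \fl \Posin$ (pullbacks in $\Pos$) or $\mCh_{\tpos\Xr} \fl \Posin$ (chain quasi-pullbacks in $\Ch_{\tpos\Xr}$). Here Theorem~\ref{t:maxchains} does the conceptual work: maximal chains in $\tpos\Xr$ are canonically identified with maximal traces equipped with a choice of interior basepoint, so after restriction along $\sysh{n}{\Xr}$ the resulting diagram in $\Pers{\Vect}$ is exactly the family of persistence modules along maximal traces, glued along (quasi-)pullback overlaps. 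For item (3), the same machinery is applied to the sub-poset $U_\alpha \subseteq \tpos\Xr$, namely the up-set of the constant trace at $\alpha$, whose maximal chains are exactly the maximal chains of $\tpos\Xr$ passing through $\alpha$; restricting $D$ to $U_\alpha$ and invoking Proposition~\ref{prop:colimmaxchain} and Proposition~\ref{Proposition:Colimits} inside $U_\alpha$ yields the claim.

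The main obstacle is not logical but conceptual: one must reconcile the parametrisation-indexed persistence modules of Section~\ref{sec:persistenttrace}, \emph{a priori} $\R$- or $\N$-indexed diagrams, with the intrinsic order-theoretic chains in $\tpos\Xr$ used by Proposition~\ref{Proposition:Colimits}. This is precisely what the discussion following Theorem~\ref{t:maxchains} achieves: a maximal chain in $\tpos\Xr$ depends only on the sequence of extremal points visited, so the two viewpoints agree up to cofinal inclusion, and the restriction of $\sysh{n}{\Xr}$ along either yields the same persistence module. A minor technicality to double-check is the hypothesis of Proposition~\ref{Proposition:Colimits} that the functor $F : \Phi \fl \Pers{\Vect}$ lands in $\iPers{\Vect}$; this is automatic because $F$ is defined by restricting the \emph{fixed} functor $\sysh{n}{\Xr}$ along inclusions of sub-posets, so the comparison natural transformations between restrictions are identities wherever they are defined.
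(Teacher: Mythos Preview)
Your proposal is correct and follows the same approach as the paper, which simply records that the three items are direct consequences of Proposition~\ref{Proposition:Colimits} combined with Proposition~\ref{prop:colimchain} (and, implicitly, Proposition~\ref{prop:colimmaxchain} for items (2) and (3)). Your write-up is in fact considerably more explicit than the paper's one-line proof: you spell out which colimit reconstruction is being invoked in each case, verify the cocompleteness hypothesis on $\Vect$, and address the identification of chain-restrictions with parametrised persistence modules and the $\iPers$ condition, all of which the paper leaves to the reader.
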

\begin{proof}
These are direct consequences of Propositions~\ref{Proposition:Colimits} and~\ref{prop:colimchain}.
\end{proof}

\begin{example}
Here we illustrate the above construction in the case of a particular directed graph using the initial point filtration. Consider the following directed acyclic graph $G$:
\[
\xymatrix@C=3em{
i
\ar[r] _-{s_1}
\ar@/^1.5pc/[rr] ^-{u_1}
\ar@/_2.5pc/[rrr] _-{d_1}
&
x_1
\ar[r]_-{s_2}
&
x_2
\ar[r]^-{s_3}
&
x_3
\ar[r]^-{s_4}
&
x_4
\ar[r]^-{s_5}
&
x_5
\ar@/_/[r] _-{d_2}
\ar@/^/[r] ^-{u_2}
&
t
}
\]
This graph contains six maximal discrete traces, considered as words over the labels on the edges. Since we will consider initial point filtrations, we depict below the Hasse diagram of the up-set of the constant trace on $i$ as a subposet of the trace poset:
\[
\xymatrix@R=1.4em{
\odot
&&
\cdot
&
\cdot
&&
\cdot
&
\cdot
&&
\cdot
&
\k^6
\\
&
\cdot \ar@/_/[ur]_{u_2} \ar@/^/[ul]^{d_2}
&&&
\cdot \ar@/_/[ur]_{u_2} \ar@/^/[ul]^{d_2}
&&&
\cdot \ar@/_/[ur]_{u_2} \ar@/^/[ul]^{d_2}
&
&
\k^3
\\
&
\cdot \ar[u] ^{s_5}
&&&
\cdot\ar[u] ^{s_5}
&&&
\cdot\ar[u] ^{s_5}
&
&
\k^3
\\
&
\cdot\ar[u] ^{s_4}
&&&
\cdot\ar[u] ^{s_4}
&&&
\cdot\ar[u] ^{s_4}
&
&
\k^3
\\
&&&&
\cdot\ar[u] ^{s_3}
&&&
\cdot\ar[u] ^{s_3}
&
&
\k^2
\\
&&&&
\cdot\ar[u] ^{s_2}
&&&&
&
\k
\\
&&&&
\cdot
\ar[u] ^{s_1} 
\ar@/^3pc/[uuulll] ^-{d_1}
\ar@/_2pc/[uurrr] _-{u_1}
&&&&
&
\k
}
\]
The bottom-most node in the Hasse diagram corresponds to the constant trace on $i$, and each of the others corresponds to the trace built from the labels on the arrows in the above diagram, starting at the bottom-most node and ending at the node in question. For example, the circled node corresponds to the (maximal) path $d_1 \cdot s_4 \cdot s_5 \cdot d_2$. All nodes at the same height correspond to paths with the same beginning- and end-points, the degree of which is noted on the right.

We first consider the initial filtrations associated to the maximal traces
\begin{align*}
\alpha := d_1 \cdot s_4 \cdot s_5 \cdot u_2 
\qquad\text{and}\qquad
\alpha' := d_1 \cdot s_4 \cdot s_5 \cdot d_2.
\end{align*}
The persistence vector space $\sysh{0}{G}_\alpha$ obtained along $\alpha$ is
\[
\xymatrix{
\k
\ar[r]
&
\k^3
\ar[r]
&
\k^3
\ar[r]
&
\k^3
\ar[r]
&
\k^6,
}
\]
where each of the linear maps are given by the one-step extensions along $\alpha$ and all are injective. The persistence vector space associated to $\alpha'$ is identical, except for the final map $\k^3 \fl \k^6$. Indeed, the map induced along $\alpha$ coincides with that induced along $\alpha'$ on two of the basis elements of $\k^3$, but have distinct images for the third. 

This means that in the colimit, the first three maps will be identified, giving a subdiagram of the form:
\[
\xymatrix@R=.5em{
\k
\ar[r]
&
\k^3
\ar[r]
&
\k^3
\ar[r]
&
\k^3
\ar@/^1pc/[r]
\ar@/_1pc/[r]
&
\k^6,
}
\]
where we have identified the two copies of $\k^6$.

Applying similar reasoning to the other maximal paths, the colimit we obtain is given below:
\[
\xymatrix@C=3em{
\k
\ar[r] _-{}
\ar@/^1.5pc/[rr] ^-{}
\ar@/_2.5pc/[rrr] _-{}
&
\k
\ar[r]_-{}
&
\k^2
\ar[r]^-{}
&
\k^3
\ar[r]^-{}
&
\k^3
\ar[r]^-{}
&
\k^3
\ar@/_/[r] _-{}
\ar@/^/[r] ^-{}
&
\k^6.
}
\]
We have identified copies of $\k^n$ corresponding to the same end-points in order to visually relate the natural homology diagram to the graph $G$.
\end{example}
    
The persistent homology along a path in a directed acyclic graph tells us, at each step, how many parallel paths reach the same point. In this sense, the birth of generators in the persistent homology along a given trace $\alpha$ indicate the existence of a trace which has ``merged'' with $\alpha$ at that point. Long \replaced{``stagnant'' periods}{periods of ``sterility''}, \ie intervals $[k,k']$ in which no births occur, indicate a portion of the trace which has no parallel paths for that portion. In this sense, such ``stagnant'' \deleted{or ``sterile''} intervals indicate parts of the graph which have only one path connecting them. Furthermore, in the above example, there is only one \emph{source}, \ie node with only outgoing edges, and one \emph{sink}, \ie node with only incoming edges. In a finite directed acyclic graph, there are only finitely many pairs of sources and sinks, and applying the above to each pair yields the natural homology diagram restricted to up-sets, in the trace poset, of sinks.

\begin{example}
We are now considering the case of a cube of dimension $3$, minus its interior, where we \replaced{label}{labelled} two points, the initial point $0$ and the final point $1$, as well as the twelve arrows $x$, $y$, $z$, $xy$, $xz$, $yx$, $yz$, $zx$, $zy$, $xyz$, $yxz$ and $zxy$:
\begin{center}
\begin{tikzpicture}[scale=3,tdplot_main_coords]
    \coordinate (O) at (0,0,0);
    \tdplotsetcoord{P}{1.414213}{54.68636}{45}
    \draw[dashed,fill=gray!50,fill opacity=1] (O) edge node[below] {$y$} (Py);
    \draw[dashed,fill=gray!50,fill opacity=1] (O) node[below] {$0$} -- (Py);    
    \draw[fill=gray!50,fill opacity=1] (Py) edge node[below] {$xy$} (Pxy);
    \draw[fill=gray!50,fill opacity=1] (Pxy) edge node[right]{$zxy$} (P);
\draw[fill=gray!50,fill opacity=1] (O) edge node[above]{$x$} (Px);
\draw[dashed,->,fill=yellow,fill opacity=1] (O) edge node[left]{$z$} (Pz);
    \draw[fill=green,fill opacity=1] (Py) edge node[right] {$zy$} (Pyz);
    \draw[fill=red,fill opacity=1] (Px) edge node[below] {$yx$} (Pxy);
    \draw[fill=green,fill opacity=1] (Pz) edge node[above] {$yz$} (Pyz);
    \draw[fill=red,fill opacity=1] (Pz) edge node[above] {$xz$} (Pxz);
    \draw[fill=magenta,fill opacity=1] (Px) edge node[left] {$zx$} (Pxz); 
\draw[fill=magenta,fill opacity=1] (Pxz) edge node[above] {$yxz$} (P); 
\draw[fill=magenta,fill opacity=1] (Pyz) edge node[below] {$xyz$} (P); 
\draw (P) node[above] {$1$};
\draw[fill=gray!50,fill opacity=0.1] (O) -- (Py) -- (Pyz) -- (Pz) -- cycle;
    \draw[fill=yellow,fill opacity=0.1] (O) -- (Px) -- (Pxz) -- (Pz) -- cycle;
    \draw[fill=green,fill opacity=0.1] (Pz) -- (Pyz) -- (P) -- (Pxz) -- cycle;
    \draw[fill=red,fill opacity=0.1] (Px) -- (Pxy) -- (P) -- (Pxz) -- cycle;
    \draw[fill=magenta,fill opacity=0.1] (Py) -- (Pxy) -- (P) -- (Pyz) -- cycle;
\end{tikzpicture}
\end{center}
We now consider, as in the previous example, the sub-diagram of the natural homology, for traces beginning at the initial state 0. 

We start with the first natural homology group: the unidimensional persistent homology value for any filtration of any maximal trace starting at 0 is just~$\k$, and its colimit just identifies all these copies of $\k$ for each prefix of maximal traces that are the same. This colimit is indeed isomorphic to the natural homology in dimension 1 of the cube, which is $\k$ for each trace starting at 0. 

We examine now the case of the second natural homology group: the unidimensional persistence diagram for any filtration of any maximal trace starting at 0 is $0 \rightarrow \k$ ($\k$ corresponds to the case of a maximal trace from 0 to 1), and its colimit just identifies all these copies of $\k$ for \added{a} maximal trace from 0 to 1. This colimit is indeed isomorphic to the natural homology in dimension 2 of the cube, which is $0$ for each trace starting at 0 and ending at a point which is not 1, and which is equal to $\k$ for maximal traces from 0 to 1. 

%
\end{example}

\begin{example}
Consider Fahrenberg's matchbox, which is the sub-space of the cube $[0,1]^3$, where we leave out the interior of the cube, and the bottom face (which is the convex hull of points $(0,0,0)$, $(0,1,0)$, $(1,0,0)$ and $(1,1,0)$). 
We recall from Section \ref{sec:nathommatchbox}, that the sub-diagram of the natural homology in dimension 1 of Fahrenberg's matchbox example, for traces starting at the initial point is: 
\[
\xymatrix@C=6em{
&
\k
&
\\
\k
	\ar[ur]
&
\k^2
	\ar[u]
&
\k
	\ar[ul]
\\
\k
	\ar[u]
	\ar[ur]
&
\k
	\ar[ul]
	\ar[ur]
&
\k
	\ar[u]
	\ar[ul]
\\
&
\k
	\ar[ur]
	\ar[u]
	\ar[ul]
&
}
\]    
This can also be obtained by applying Theorem \ref{thm:1} as follows. The unidimensional persistence diagrams for any filtration of any maximal trace starting at the initial state $(0,0,0)$ and finishing at the final trace $(1,1,1)$ are:
\begin{itemize}
\item Either $\k$, if the trace never intersects the half-open segment going from $(1,1,0)$ to $(1,1,1)$ (without the last point),
\item Or $\k \rightarrow \k^2 \rightarrow \k$, otherwise.
\end{itemize}
There again, the colimit identifies the copies of $\k$ (resp. $\k^2$) on equal prefixes of maximal trace, and we recover the diagram above. 
\end{example}

\section{Further remarks} 

\label{sec:remarks}





In this section we discuss two lines of research concerning the \replaced{persistent}{persistence} homology of trace spaces and its potential application.

\subsection{Metric structure on natural homology}
\label{ss:MetricStructure}

An important feature of persistent homology is that it gives information about a space in terms of its metric and not just its topology. We show that the construction of \replaced{persistent}{persistence} homology bears similarities with the one of natural homology, \deleted{construction} with particular functors from a factorization category to $\Top$. One may ask whether, when the directed space $X$ we consider has a metric space structure, if there is a metric structure that can be exploited in the corresponding persistence modules. 

In the case of the geometric realization of finite geometric precubical sets without loops, as we saw in Section \ref{SS:dspaces}, we can define the $l_1$-arc length $l_1(p)$ of a directed path $p$. From this, we can define a distance between elements of the trace poset, or more precisely, we can see the trace poset as a weighted poset as in 
\cite{relativeinterleavings,bubenik} as follows: $w(p,q)=0$ if $p\not \leq q$, otherwise $q=upv$ and $w(p,q)=max\{l_1(u),l_1(v)\}$. Interleaving distances in such contexts are expressed in terms of translations on posets, that vastly generalize the translations \replaced{$t \mapsto t+\epsilon$}{$t \in \R \rightarrow t+\epsilon$} \replaced{which define the}{that defines that} usual interleaving distance in unidimensional persistence. 
Here, we will only briefly discuss the classical interleaving distance in unidimensional persistence homology, extracted along a trace of a directed \replaced{space}{spaces}, that will be induced by translations on the trace poset. The construction of interleaving distances for the full persistence object derived from natural homology is left for future work. 

Take two directed spaces $\Xr$ and $\Yr$, geometric realizations of finite geometric precubical sets without loops, $p$ a trace of $\Xr$ and $q$ a trace of $\Yr$. 
We also suppose we are given a chain $(p_i)_{i\in\Real}$ in the interval $[p(0),p]$, and a chain $(q_i)_{i \in \Real}$ in the interval $[q(0),q]$. 
\replaced
{
In what follows, we write $p_i$ (resp. $q_i$) for the trace going from $x^p_i$ to $y^p_i$ (resp. from $x^q_i$ to $y^q_i$) and $(\alpha^p_{i,j},\beta^p_{i,j})$ (resp. $(\alpha^q_{i,j},\beta^q_{i,j})$) for the morphism from $p_i$ to $p_j$ (resp. from $q_i$ to $q_j$).
}
{
In what follows, we write $p_i$ to be the trace going from $x^p_i$ to $y^p_i$ (resp. $q_i$ the trace going from $x^q_i$ to $y^q_i$) and $(\alpha^p_{i,j},\beta^p_{i,j})$ the morphism from $p_i$ to $p_j$ (resp. 
$(\alpha^q_{i,j},\beta^q_{i,j})$ the morphism from $q_i$ to $q_j$).
}



We construct by Proposition \ref{prop:1} a persistence $\k$-vector space $\sysh{n}{\Xr}_{p_i}$ indexed by the chain $(p_i)_{i\in\Real}$ and a persistence $\k$-vector space $\sysh{n}{\Yr}_{q_i}$ indexed by the chain $(q_i)_{i\in\Real}$. 
In particular we have linear maps
{
\begin{eqnarray*}
\sysh{n}{\Xr}_{(\alpha^p_{i,j},\beta^p_{i,j})} :
\sysh{n}{\Xr}_{p_i} \rightarrow \sysh{n}{\Xr}_{p_j} \\
\text{(resp.\;\;} 
\sysh{n}{\Yr}_{(\alpha^q_{i,j},\beta^q_{i,j})} :
\sysh{n}{\Yr}_{q_i} \rightarrow \sysh{n}{\Yr}_{q_j} \;).
\end{eqnarray*}
}

The \emph{interleaving distance} between $\sysh{n}{\Xr}_{p_i}$ and $\sysh{n}{\Yr}_{q_i}$ is defined as follows.
We say that $\sysh{n}{\Xr}_{p_i}$ and $\sysh{n}{\Yr}_{q_i}$ are \emph{$\epsilon$-interleaved}, for some $\epsilon \geq 0$\added{,} if we have families of linear maps
$(\phi_ i: \ \sysh{n}{\Xr}_{p_i} \rightarrow \sysh{n}{\Yr}_{q_{i+\epsilon}})_{i \in \Real}$ 
and $(\psi_i: \ \sysh{n}{\Yr}_{q_{i}} \rightarrow \sysh{n}{\Xr}_{p_{i+\epsilon}})_{i \in \Real}$ such that for all $i \leq j \in \Real$:
\begin{eqnarray}
\phi_{j} \circ 
\sysh{n}{\Xr}_{(\alpha^p_{i,j},\beta^p_{i,j})} & = & \sysh{n}{\Yr}_{(\alpha^q_{i+\epsilon,j+\epsilon},\beta^q_{i+\epsilon,j+\epsilon})} \circ \phi_i \label{eq:interleave1}\\
\psi_{j} \circ 
\sysh{n}{\Yr}_{(\alpha^q_{i,j},\beta^q_{i,j})} & = & \sysh{n}{\Xr}_{(\alpha^p_{i+\epsilon,j+\epsilon},\beta^p_{i+\epsilon,j+\epsilon})} \circ \psi_i \label{eq:interleave2} \\
\psi_{i+\epsilon} \circ \phi_{i} & = & \sysh{n}{\Xr}_{(\alpha^p_{i,i+2\epsilon},\beta^p_{i,i+2\epsilon})} \label{eq:interleave3} \\
\phi_{i+\epsilon} \circ \psi_{i} & = & \sysh{n}{\Yr}_{(\alpha^q_{i,i+2\epsilon},\beta^q_{i,i+2\epsilon})} \label{eq:interleave4}
\end{eqnarray}

Then the interleaving distance between $\sysh{n}{\Xr}_{p_i}$ and $\sysh{n}{\Yr}_{q_i}$ is the infimum of all $\epsilon \geq 0$ such that $\sysh{n}{\Xr}_{p_i}$ and $\sysh{n}{\Yr}_{q_i}$ are $\epsilon$-interleaved, $\infty$ if there is no such $\epsilon$.

There are similarities between bisimulation equivalent natural homologies\deleted{,} and having non-infinite interleaving distance, as we show below. 

When there exists a dihomeomorphism $f: \ \Xr \rightarrow \Yr$, we know from Lemma \ref{lem:dihomeobisim} that their corresponding natural homologies are bisimulation equivalent, and we now show that there are particular relations between
the persistent objects $\sysh{n}{\Xr}_{p_i}$ and $\sysh{n}{\Yr}_{q_i}$. 
Then, for any chain $(p_i)_{i\in\Real}$ in the interval $[p(0),p]$, for any dipath $p$ in $\Xr$, $\sysh{n}{\Xr}_{p_i}$ and $\sysh{n}{\Yr}_{f^*(p_i)}$ have interleaving distance 0.

Indeed, we take $\phi_t: \ \sysh{n}{\Xr}_{p_t} \rightarrow \sysh{n}{\Yr}_{f^*(p_t)}$ to be $[f^*(x)]$, for each $[x] \in \sysh{n}{\Xr}_{p_t}$, the homology class of $f^*(x)$, for $[x]$ the homology class of any $x$ cycle in ${\mathfrak{T}}(\Xr)$ from $p_t(0)$ to $p_t(1)$.
{
Equations \eqref{eq:interleave3} and \eqref{eq:interleave4} are trivially verified since
$(f^{-1})^*\circ f^*=f^* \circ (f^{-1})^*$ and the maps
$\sysh{n}{\Xr}_{(\alpha^p_{i,i},\beta^p_{i,i})}$ and
$\sysh{n}{\Yr}_{(\alpha^{f^*(p)}_{i,i},\beta^{f^*(p)}_{i,i})}$ are identities (since $\alpha^p_{i,i}$, $\beta^p_{i,i}$, $\alpha^{f^*(p)}_{i,i}$ and $\beta^{f^*(p)}_{i,i}$ are constant dipaths). 
Similarly, Equations (\ref{eq:interleave1}) and (\ref{eq:interleave2}) are trivially satisfied since the maps $\sysh{n}{\Xr}_{(\alpha^p_{i,i},\beta^p_{i,i})}$,
$\sysh{n}{\Yr}_{(\alpha^{f^*(p)}_{i,i},\beta^{f^*(p)}_{i,i})}$ are identities.
}

In a \replaced{similar}{smilar} manner, we may prove that for any chain $(q_i)_{i\in\Real}$ in the interval $[q(0),q]$, for any dipath $q$ in $\Yr$, $\sysh{n}{\Xr}_{(f^{-1})^*(q_i)}$ and $\sysh{n}{\Yr}_{q_i}$ have interleaving distance 0.

\deleted{Of course, we miss an important point about persistence if we do not insist on considering only representative\added{s} $p$ of $\tr{p}$ that are parameterized by the arc-length. In that case, the interleaving distance is zero only when $f$ is an isometry.}

\deleted{
For geometric realizations $\Xr$ of finite geometric precubical sets $S$, there are natural $\N$-persistence objects, given by, for the geometric realization $p$ in $\Xr$ of any combinatorial directed edge-path $p_S$ in of $S$ the filtration of $[p(0),p]$ in the trace poset of $\Xr$ given by $C_i$ being the geometric realization of the prefix with $i$ steps of $p_S$. Considering this $\N$-persistence object as a $\Real$-persistence object
$\sysinth{n}{\Xr}_{p}$, which is constant between two integers, we get that 
%
the interleaving distance between $\sysh{n}{\Xr}_{p}$ and $\sysinth{n}{\Xr}_{p}$ is at most 1.
Whereas for such geometric realizations it is known (see \cite{naturalhomology}) that the \replaced{``}{"}combinatorial" natural homology of $S$ is bisimulation equivalent to the natural homology of $\Xr$ only in some particular cases, e.g. when $S$ is a cubical complex. 
}

\subsection{Algorithmical considerations}

\label{SS:algo}

In this section, we sketch practical calculations of the persistence module that corresponds, by Theorem \ref{thm:1}, to natural homology, in the case of directed spaces arising in concurrency theory, which was the original motivation of this work. 
It has been shown in e.g. \cite{fajstrup16} that concurrent languages, such as the shared memory PV language, can be given semantics in pre-cubical sets, hence a directed space. This has been been implemented in such software as ALCOOL \cite{alcool} and oplate \cite{oplate}. It has been shown that under some hypotheses, \replaced{valid}{validated} for such concurrent languages as the PV language, the natural homology computed on pre-cubical sets $K$, which is the homology of the diagram of \replaced{trace}{traces} spaces $\trac{K}{f}$, for all $a$, $b$ vertex in $K_0$, and $f$ dipath from $a$ to $b$ in $K_1$, is bisimulation equivalent to the natural homology of the dispace, geometric realization of $K$, \cite{naturalhomology}. 

For finite pre-cubical sets, Raussen \cite{rausimo2} and Ziemianski \cite{ziemianski} showed that singular homology
groups of trace spaces such as $\trace{K}{a}{b}$ (which is isomorphic to all $\trac{K}{f}$, $f$ dipath from $a$ to $b$) 
are computable, by calculating a finite presentation of the trace spaces
(prod-simplicial, simplicial or CW-complex) from which we can compute
homology using \added{the} Smith normal form of matrices. This has also been implemented in e.g. 
\cite{calctracespace1}. 


For ``nice'' precubical sets $X$ such as the ones given by the PV language (without loops), and for any vertices $a$ and
$b$ in $X$, there is a way to get a finite combinatorial model $T(X)(a,b)$ 
(a finite CW-complex, or a finite simplicial set) 
that is homotopy equivalent to the trace space of 
$X$ from $a$ to $b$, $\trace X a b$, which is 
both functorial in $X$, $a$, $b$ and also minimal among such functors~\cite{ziemianski}. Algorithmically, this relies on the cells of the CW-complex being identified with certain combinatorial paths in $X$, as was exemplified in Section~\ref{SS:MatchboxExample}. 
This algorithm allows to construct the (poset) filtration of CW-complexes (or homotopy equivalent simplicial sets) $T(X)(a,b)$ and therefore compute their homology, for each $a$ and $b$ in $X$. Thus this would algorithmically build the persistence module, or natural homology, of the precubical set~$X$. 
Of course this is a rather naive algorithm, in that we compute homology separately for each pair of points. We postpone the discussion of non naive algorithms similar to rank invariant computations for a future article. 


\section{\replaced{F}{Conclusion and f}uture work}

In \cite{CameronGoubaultMalbosHHA22}, we related natural homotopy, a natural system of homotopy groups associated to a dispace, to group objects in a certain category. This was achieved via the notion of composition pairing, introduced by Porter and largely inspired by lax functors. This extra structure of composition pairing is also present in natural homology, and we believe such \replaced{``}{"}homological" operations may have some interesting counterparts in persistent homology, which we are planning on studying in future work. 

In hybrid systems theory, there is a natural sheaf theoretic formulation of dynamical systems, starting from \cite{lawvere1}, put in action (in particular, concerning temporal logic formulas and verification) and extended in 
\replaced{\cite{schultz2019dynamical,schultz2017temporal}}{\cite{schultz2019dynamical} \cite{schultz2017temporal}}. The formalization bears similarities with natural homology for directed spaces, except that it takes a sheaf-theoretic view, leading to contravariant, instead of covariant, functors. 

Consider the monoid $(\Real_+,+)$, seen as a one-object category $ \Real$. Its factorization category ${\cal F}\Real$ has as objects, intervals $[0,l]$ for $l \in \Real$, and as morphisms, inclusions of such intervals into larger intervals $[0,l']$, translated by some value $k\leq l'-l$. Presheaves on ${\cal F}\Real$ can be endowed with a Grothendieck topology, which is the Johnstone topology \cite{johnstone1}, so that sheaves correspond to dynamical systems (in some ways, \replaced{``}{"}continuous graphs" described locally by gluings of paths of various lengths, that agree on subpaths). 

Therefore, dynamical systems on some metric space $X$ are considered as being particular presheaves, \emph{i.e.} functors \replaced{$D: {\cal F}\Real \rightarrow Set$}{$D: \ {\cal F}\R \rightarrow Set$}, describing, for each length $l$, the set of paths of length $l$, and for each inclusion $Tr_k$ of $[0,l]$ into $[0,l']$ as $[k,k+l]$ (with $k\leq l'-l$), a restriction map from the length $l'$ paths to length $l\leq l'$ paths. 

Of course, for real continuous dynamical systems, we have an extra structure, which is that the set of paths on $X$ is a topological space, with the compact-open topology (equivalently, uniform convergence). Hence what we really have is a presheaf \replaced{$D:{\cal F}\Real \rightarrow \Top$}{$D: \ {\cal F}\R \rightarrow \Top$}, where $\Top$ is a convenient category of topological spaces. 

This means that $D$ belongs to a certain $\infty$-topos \cite{Lurie}, but the point for now is to notice that $D$ gives in particular a filtration of topological spaces from $\Real_-$ to $Top$, by setting $X_{-l}=D([0,l])$ and maps $X_{-l'}\rightarrow X_{-l}$ when $-l'\leq -l$,
that we can choose to be $D(Tr_0: \ X_{-l'} \rightarrow X_{-l})$. Hence this filtration looks at a dynamical system on $X$ through,  successively, its set of smaller initial paths. 


Similarly, taking the additive monoid $\N$, the same sheaf construction gives the ordinary category of graphs, and the filtration we have been giving starts with the topological space of maximal length in $X$, and carries on with paths of smaller length. 

The persistent homology for these filtrations is an interesting invariant of dynamical systems, and has a metric information induced by the metric on $X$ that we are planning on examining in future work. 










%

 
 



\def\cprime{$'$}

\begin{thebibliography}{40}
\ifx \bisbn   \undefined \def \bisbn  #1{ISBN #1}\fi
\ifx \binits  \undefined \def \binits#1{#1}\fi
\ifx \bauthor  \undefined \def \bauthor#1{#1}\fi
\ifx \batitle  \undefined \def \batitle#1{#1}\fi
\ifx \bjtitle  \undefined \def \bjtitle#1{#1}\fi
\ifx \bvolume  \undefined \def \bvolume#1{\textbf{#1}}\fi
\ifx \byear  \undefined \def \byear#1{#1}\fi
\ifx \bissue  \undefined \def \bissue#1{#1}\fi
\ifx \bfpage  \undefined \def \bfpage#1{#1}\fi
\ifx \blpage  \undefined \def \blpage #1{#1}\fi
\ifx \burl  \undefined \def \burl#1{\textsf{#1}}\fi
\ifx \doiurl  \undefined \def \doiurl#1{\url{https://doi.org/#1}}\fi
\ifx \betal  \undefined \def \betal{\textit{et al.}}\fi
\ifx \binstitute  \undefined \def \binstitute#1{#1}\fi
\ifx \binstitutionaled  \undefined \def \binstitutionaled#1{#1}\fi
\ifx \bctitle  \undefined \def \bctitle#1{#1}\fi
\ifx \beditor  \undefined \def \beditor#1{#1}\fi
\ifx \bpublisher  \undefined \def \bpublisher#1{#1}\fi
\ifx \bbtitle  \undefined \def \bbtitle#1{#1}\fi
\ifx \bedition  \undefined \def \bedition#1{#1}\fi
\ifx \bseriesno  \undefined \def \bseriesno#1{#1}\fi
\ifx \blocation  \undefined \def \blocation#1{#1}\fi
\ifx \bsertitle  \undefined \def \bsertitle#1{#1}\fi
\ifx \bsnm \undefined \def \bsnm#1{#1}\fi
\ifx \bsuffix \undefined \def \bsuffix#1{#1}\fi
\ifx \bparticle \undefined \def \bparticle#1{#1}\fi
\ifx \barticle \undefined \def \barticle#1{#1}\fi
\bibcommenthead
\ifx \bconfdate \undefined \def \bconfdate #1{#1}\fi
\ifx \botherref \undefined \def \botherref #1{#1}\fi
\ifx \url \undefined \def \url#1{\textsf{#1}}\fi
\ifx \bchapter \undefined \def \bchapter#1{#1}\fi
\ifx \bbook \undefined \def \bbook#1{#1}\fi
\ifx \bcomment \undefined \def \bcomment#1{#1}\fi
\ifx \oauthor \undefined \def \oauthor#1{#1}\fi
\ifx \citeauthoryear \undefined \def \citeauthoryear#1{#1}\fi
\ifx \endbibitem  \undefined \def \endbibitem {}\fi
\ifx \bconflocation  \undefined \def \bconflocation#1{#1}\fi
\ifx \arxivurl  \undefined \def \arxivurl#1{\textsf{#1}}\fi
\csname PreBibitemsHook\endcsname

\bibitem{Carlsson09}
\begin{barticle}
\bauthor{\bsnm{Carlsson}, \binits{G.}}:
\batitle{Topology and data}.
\bjtitle{Bull. Amer. Math. Soc. (N.S.)}
\bvolume{46}(\bissue{2}),
\bfpage{255}--\blpage{308}
(\byear{2009}).
\doiurl{10.1090/S0273-0979-09-01249-X}
\end{barticle}
\endbibitem

\bibitem{Barannikov1994}
\begin{bchapter}
\bauthor{\bsnm{Barannikov}, \binits{S.A.}}:
\bctitle{The framed morse complex and its invariants}.
In: \bbtitle{Singularities and Bifurcations},
pp. \bfpage{93}--\blpage{115}.
\bpublisher{American Mathematical Society},
\blocation{Providence, RI}
(\byear{1994})
\end{bchapter}
\endbibitem

\bibitem{FrosiniLandi99}
\begin{barticle}
\bauthor{\bsnm{Frosini}, \binits{P.}},
\bauthor{\bsnm{Landi}, \binits{C.}}:
\batitle{Size theory as a topological tool for computer vision}.
\bjtitle{Pattern Recognition and Image Analysis}
\bvolume{9}(\bissue{4}),
\bfpage{596}--\blpage{603}
(\byear{1999})
\end{barticle}
\endbibitem

\bibitem{EdelsbrunnerLetscherZomorodian02}
\begin{botherref}
\oauthor{\bsnm{Edelsbrunner}, \binits{H.}},
\oauthor{\bsnm{Letscher}, \binits{D.}},
\oauthor{\bsnm{Zomorodian}, \binits{A.}}:
Topological persistence and simplification.
vol. 28,
pp. 511--533
(2002).
\doiurl{10.1007/s00454-002-2885-2}
\end{botherref}
\endbibitem

\bibitem{GeomConc}
\begin{barticle}
\bauthor{\bsnm{Goubault}, \binits{E.}}:
\batitle{Geometry and concurrency: a user's guide}.
\bjtitle{Math. Struct. Comput. Sci.}
\bvolume{10}(\bissue{4}),
\bfpage{411}--\blpage{425}
(\byear{2000})
\end{barticle}
\endbibitem

\bibitem{gouthe}
\begin{botherref}
\oauthor{\bsnm{Goubault}, \binits{E.}}:
G{\'e}om{\'e}trie du parall{\'e}lisme.
PhD thesis,
Ecole Polytechnique
(1995)
\end{botherref}
\endbibitem

\bibitem{herlihy1999topological}
\begin{barticle}
\bauthor{\bsnm{Herlihy}, \binits{M.}},
\bauthor{\bsnm{Shavit}, \binits{N.}}:
\batitle{The topological structure of asynchronous computability}.
\bjtitle{Journal of the ACM (JACM)}
\bvolume{46}(\bissue{6}),
\bfpage{858}--\blpage{923}
(\byear{1999})
\end{barticle}
\endbibitem

\bibitem{marco2013homological}
\begin{bbook}
\bauthor{\bsnm{Grandis}, \binits{M.}}:
\bbtitle{Homological Algebra: In Strongly Non-abelian Settings}.
\bpublisher{World Scientific Publishing Company},
\blocation{Singapore}
(\byear{2013})
\end{bbook}
\endbibitem

\bibitem{fajstrup16}
\begin{bbook}
\bauthor{\bsnm{Fajstrup}, \binits{L.}},
\bauthor{\bsnm{Goubault}, \binits{E.}},
\bauthor{\bsnm{Haucourt}, \binits{E.}},
\bauthor{\bsnm{Mimram}, \binits{S.}},
\bauthor{\bsnm{Raussen}, \binits{M.}}:
\bbtitle{Directed {A}lgebraic {T}opology and {C}oncurrency}.
\bpublisher{Springer},
\blocation{Berlin}
(\byear{2016})
\end{bbook}
\endbibitem

\bibitem{herlihy}
\begin{bbook}
\bauthor{\bsnm{Herlihy}, \binits{M.}},
\bauthor{\bsnm{Kozlov}, \binits{D.}},
\bauthor{\bsnm{Rajsbaum}, \binits{S.}}:
\bbtitle{Distributed {C}omputing {T}hrough {C}ombinatorial {T}opology}.
\bpublisher{Elsevier Science},
\blocation{Amsterdam}
(\byear{2013})
\end{bbook}
\endbibitem

\bibitem{grandisbook}
\begin{bbook}
\bauthor{\bsnm{Grandis}, \binits{M.}}:
\bbtitle{Directed Algebraic Topology, Models of Non-reversible Worlds}.
\bpublisher{Cambridge University Press},
\blocation{Cambridge}
(\byear{2009})
\end{bbook}
\endbibitem

\bibitem{Faj06}
\begin{barticle}
\bauthor{\bsnm{Fajstrup}, \binits{L.}},
\bauthor{\bsnm{Rau\ss{}en}, \binits{M.}},
\bauthor{\bsnm{Goubault}, \binits{E.}}:
\batitle{Algebraic topology and concurrency}.
\bjtitle{Theor. Comput. Sci.}
\bvolume{357}(\bissue{1}),
\bfpage{241}--\blpage{278}
(\byear{2006}).
\doiurl{10.1016/j.tcs.2006.03.022}
\end{barticle}
\endbibitem

\bibitem{naturalhomology}
\begin{bchapter}
\bauthor{\bsnm{Dubut}, \binits{J.}},
\bauthor{\bsnm{Goubault}, \binits{E.}},
\bauthor{\bsnm{Goubault{-}Larrecq}, \binits{J.}}:
\bctitle{Natural homology}.
In: \bbtitle{Automata, Languages, and Programming - 42nd International
  Colloquium, {ICALP} 2015, Kyoto, Japan, July 6-10, 2015, Proceedings, Part
  {II}},
pp. \bfpage{171}--\blpage{183}
(\byear{2015})
\end{bchapter}
\endbibitem

\bibitem{BauesWirsching}
\begin{barticle}
\bauthor{\bsnm{Baues}, \binits{H.-J.}},
\bauthor{\bsnm{Wirsching}, \binits{G.}}:
\batitle{Cohomology of small categories}.
\bjtitle{Journal of Pure and Applied Algebra}
\bvolume{38}(\bissue{2--3}),
\bfpage{187}--\blpage{211}
(\byear{1985}).
\doiurl{10.1016/0022-4049(85)90008-8}
\end{barticle}
\endbibitem

\bibitem{geomsem}
\begin{barticle}
\bauthor{\bsnm{Goubault}, \binits{{\'{E}}.}},
\bauthor{\bsnm{Mimram}, \binits{S.}},
\bauthor{\bsnm{Tasson}, \binits{C.}}:
\batitle{Geometric and combinatorial views on asynchronous computability}.
\bjtitle{Distributed Comput.}
\bvolume{31}(\bissue{4}),
\bfpage{289}--\blpage{316}
(\byear{2018}).
\doiurl{10.1007/s00446-018-0328-4}
\end{barticle}
\endbibitem

\bibitem{carzom2}
\begin{barticle}
\bauthor{\bsnm{Carlsson}, \binits{G.}},
\bauthor{\bsnm{Zomorodian}, \binits{A.}}:
\batitle{The theory of multidimensional persistence}.
\bjtitle{Discrete Comput. Geom.}
\bvolume{42}(\bissue{1}),
\bfpage{71}--\blpage{93}
(\byear{2009})
\end{barticle}
\endbibitem

\bibitem{Eilenberg}
\begin{botherref}
\oauthor{\bsnm{Dubut}, \binits{J.}},
\oauthor{\bsnm{Goubault}, \binits{{\'E}.}},
\oauthor{\bsnm{Goubault{-}Larrecq}, \binits{J.}}:
Directed homology theories and {E}ilenberg-{S}teenrod axioms.
Applied Categorical Structures,
1--33
(2016)
\end{botherref}
\endbibitem

\bibitem{curry2021directions}
\begin{botherref}
\oauthor{\bsnm{Curry}, \binits{J.}},
\oauthor{\bsnm{Mukherjee}, \binits{S.}},
\oauthor{\bsnm{Turner}, \binits{K.}}:
How Many Directions Determine a Shape and other Sufficiency Results for Two
  Topological Transforms
(2021)
\end{botherref}
\endbibitem

\bibitem{lesnick2015interactive}
\begin{botherref}
\oauthor{\bsnm{Lesnick}, \binits{M.}},
\oauthor{\bsnm{Wright}, \binits{M.}}:
Interactive Visualization of 2-{D} Persistence Modules.
preprint, arXiv 1512.00180
(2015)
\end{botherref}
\endbibitem

\bibitem{EdelsbrunnerHarer2008}
\begin{bchapter}
\bauthor{\bsnm{Edelsbrunner}, \binits{H.}},
\bauthor{\bsnm{Harer}, \binits{J.}}:
\bctitle{Persistent homology---a survey}.
In: \bbtitle{Surveys on Discrete and Computational Geometry}.
\bsertitle{Contemp. Math.},
vol. \bseriesno{453},
pp. \bfpage{257}--\blpage{282}.
\bpublisher{Amer. Math. Soc.},
\blocation{Providence, RI}
(\byear{2008}).
\doiurl{10.1090/conm/453/08802}
\end{bchapter}
\endbibitem

\bibitem{EdelsbrunnerHarer2010}
\begin{bbook}
\bauthor{\bsnm{Edelsbrunner}, \binits{H.}},
\bauthor{\bsnm{Harer}, \binits{J.L.}}:
\bbtitle{Computational Topology}.
\bpublisher{American Mathematical Society},
\blocation{Providence, RI}
(\byear{2010}).
\doiurl{10.1090/mbk/069}
\end{bbook}
\endbibitem

\bibitem{fahdihom}
\begin{barticle}
\bauthor{\bsnm{Fahrenberg}, \binits{U.}}:
\batitle{Directed {H}omology}.
\bjtitle{Electronic Notes in Theoretical Compututer Science}
\bvolume{100},
\bfpage{111}--\blpage{125}
(\byear{2004})
\end{barticle}
\endbibitem

\bibitem{ziemianski}
\begin{barticle}
\bauthor{\bsnm{Ziemia{\'n}ski}, \binits{K.}}:
\batitle{Spaces of directed paths on pre-cubical sets}.
\bjtitle{Applicable Algebra in Engineering, Communication and Computing}
\bvolume{28},
\bfpage{497}--\blpage{525}
(\byear{2017})
\end{barticle}
\endbibitem

\bibitem{dircompl}
\begin{barticle}
\bauthor{\bsnm{Goubault}, \binits{E.}},
\bauthor{\bsnm{Farber}, \binits{M.}},
\bauthor{\bsnm{Sagnier}, \binits{A.}}:
\batitle{Directed topological complexity}.
\bjtitle{J Appl. and Comput. Topology}
\bvolume{4},
\bfpage{11}--\blpage{27}
(\byear{2020}).
\doiurl{10.1007/s41468-019-00034-x}
\end{barticle}
\endbibitem

\bibitem{Zomorodian:2005aa}
\begin{barticle}
\bauthor{\bsnm{Zomorodian}, \binits{A.}},
\bauthor{\bsnm{Carlsson}, \binits{G.}}:
\batitle{Computing persistent homology}.
\bjtitle{Discrete \& Computational Geometry}
\bvolume{33}(\bissue{2}),
\bfpage{249}--\blpage{274}
(\byear{2005}).
\doiurl{10.1007/s00454-004-1146-y}
\end{barticle}
\endbibitem

\bibitem{Uli2}
\begin{barticle}
\bauthor{\bsnm{Fahrenberg}, \binits{U.}},
\bauthor{\bsnm{Raussen}, \binits{M.}}:
\batitle{Reparametrizations of continuous paths}.
\bjtitle{J. Homotopy Relat. Struct.}
\bvolume{2},
\bfpage{93}--\blpage{117}
(\byear{2007})
\end{barticle}
\endbibitem

\bibitem{goubault2016}
\begin{botherref}
\oauthor{\bsnm{Goubault}, \binits{E.}},
\oauthor{\bsnm{Mimram}, \binits{S.}}:
{Directed Homotopy in non-positively curved spaces},
1--55
(2016).
\doiurl{10.2168/LMCS}
\end{botherref}
\endbibitem

\bibitem{CameronGoubaultMalbosHHA22}
\begin{barticle}
\bauthor{\bsnm{Calk}, \binits{C.}},
\bauthor{\bsnm{Goubault}, \binits{E.}},
\bauthor{\bsnm{Malbos}, \binits{P.}}:
\batitle{Time-reversal homotopical properties of concurrent systems}.
\bjtitle{Homology Homotopy Appl.}
\bvolume{22}(\bissue{2}),
\bfpage{31}--\blpage{57}
(\byear{2020}).
\doiurl{10.4310/hha.2020.v22.n2.a2}
\end{barticle}
\endbibitem

\bibitem{dubutthesis}
\begin{botherref}
\oauthor{\bsnm{Dubut}, \binits{J.}}:
Directed homotopy and homology theories for geometric models of true
  concurrency.
PhD thesis
(September 2017)
\end{botherref}
\endbibitem

\bibitem{relativeinterleavings}
\begin{botherref}
\oauthor{\bsnm{Botnan}, \binits{M.B.}},
\oauthor{\bsnm{Curry}, \binits{J.}},
\oauthor{\bsnm{Munch}, \binits{E.}}:
A Relative Theory of Interleavings.
arXiv 2004.14286
(2020)
\end{botherref}
\endbibitem

\bibitem{bubenik}
\begin{barticle}
\bauthor{\bsnm{Bubenik}, \binits{P.}},
\bauthor{\bparticle{de} \bsnm{Silva}, \binits{V.}},
\bauthor{\bsnm{Scott}, \binits{J.}}:
\batitle{Metrics for generalized persistence modules}.
\bjtitle{Foundations of Computational Mathematics}
\bvolume{15}(\bissue{6}),
\bfpage{1501}--\blpage{1531}
(\byear{2014}).
\doiurl{10.1007/s10208-014-9229-5}
\end{barticle}
\endbibitem

\bibitem{alcool}
\begin{bchapter}
\bauthor{\bsnm{Bonichon}, \binits{R.}},
\bauthor{\bsnm{Canet}, \binits{G.}},
\bauthor{\bsnm{Correnson}, \binits{L.}},
\bauthor{\bsnm{Goubault}, \binits{E.}},
\bauthor{\bsnm{Haucourt}, \binits{E.}},
\bauthor{\bsnm{Hirschowitz}, \binits{M.}},
\bauthor{\bsnm{Labb{\'{e}}}, \binits{S.}},
\bauthor{\bsnm{Mimram}, \binits{S.}}:
\bctitle{Rigorous evidence of freedom from concurrency faults in industrial
  control software}.
In: \beditor{\bsnm{Flammini}, \binits{F.}},
\beditor{\bsnm{Bologna}, \binits{S.}},
\beditor{\bsnm{Vittorini}, \binits{V.}} (eds.)
\bbtitle{Computer Safety, Reliability, and Security - 30th International
  Conference, {SAFECOMP} 2011, Naples, Italy, September 19-22, 2011.
  Proceedings}.
\bsertitle{Lecture Notes in Computer Science},
vol. \bseriesno{6894},
pp. \bfpage{85}--\blpage{98}.
\bpublisher{Springer},
\blocation{Berlin, Heidelberg}
(\byear{2011})
\end{bchapter}
\endbibitem

\bibitem{oplate}
\begin{botherref}
\oauthor{\bsnm{Haucourt}, \binits{E.}}:
{Some Invariants of Directed Topology Towards a Theoretical Base for a Static
  Analyzer Dealing with Fine-Grain Concurrency}.
Habilitation {\`a} diriger des recherches,
{Universit{\'e} Paris 7 - Denis Diderot}
(September 2016).
\url{https://tel.archives-ouvertes.fr/tel-01429492}
\end{botherref}
\endbibitem

\bibitem{rausimo2}
\begin{barticle}
\bauthor{\bsnm{Raussen}, \binits{M.}}:
\batitle{Simplicial models for trace spaces {II}: {G}eneral higher dimensional
  automata}.
\bjtitle{Algebraic and Geometric Topology}
\bvolume{12}(\bissue{3}),
\bfpage{1741}--\blpage{1762}
(\byear{2012})
\end{barticle}
\endbibitem

\bibitem{calctracespace1}
\begin{bchapter}
\bauthor{\bsnm{Fajstrup}, \binits{L.}},
\bauthor{\bsnm{Goubault}, \binits{E.}},
\bauthor{\bsnm{Haucourt}, \binits{E.}},
\bauthor{\bsnm{Mimram}, \binits{S.}},
\bauthor{\bsnm{Rau{\ss}en}, \binits{M.}}:
\bctitle{Trace spaces: An efficient new technique for state-space reduction}.
In: \beditor{\bsnm{Seidl}, \binits{H.}} (ed.)
\bbtitle{Programming Languages and Systems - 21st European Symposium on
  Programming}.
\bsertitle{Lecture Notes in Computer Science},
vol. \bseriesno{7211},
pp. \bfpage{274}--\blpage{294}.
\bpublisher{Springer},
\blocation{Berlin, Heidelberg}
(\byear{2012})
\end{bchapter}
\endbibitem

\bibitem{lawvere1}
\begin{botherref}
\oauthor{\bsnm{Lawvere}, \binits{F.W.}}:
State Categories and Response Functors.
preprint
(1986).
\url{https://github.com/mattearnshaw/lawvere/blob/master/pdfs/1986-state-categories-and-response-functors.pdf}
\end{botherref}
\endbibitem

\bibitem{schultz2019dynamical}
\begin{botherref}
\oauthor{\bsnm{Schultz}, \binits{P.}},
\oauthor{\bsnm{Spivak}, \binits{D.I.}},
\oauthor{\bsnm{Vasilakopoulou}, \binits{C.}}:
Dynamical systems and sheaves
(2019)
{\href{https://arxiv.org/abs/1609.08086}{{arXiv:1609.08086}}}
{[math.CT]}
\end{botherref}
\endbibitem

\bibitem{schultz2017temporal}
\begin{botherref}
\oauthor{\bsnm{Schultz}, \binits{P.}},
\oauthor{\bsnm{Spivak}, \binits{D.I.}}:
Temporal type theory
\textbf{29},
235
(2019).
A topos-theoretic approach to systems and behavior
\end{botherref}
\endbibitem

\bibitem{johnstone1}
\begin{barticle}
\bauthor{\bsnm{Johnstone}, \binits{P.}}:
\batitle{A note on discrete conduch{\'e} fibrations}.
\bjtitle{Theory and Applications of Categories}
\bvolume{5}(\bissue{1}),
\bfpage{1}--\blpage{11}
(\byear{1999})
\end{barticle}
\endbibitem

\bibitem{Lurie}
\begin{bbook}
\bauthor{\bsnm{Lurie}, \binits{J.}}:
\bbtitle{Higher Topos Theory (AM-170)}.
\bsertitle{Annals of Mathematics Studies}.
\bpublisher{Princeton University Press},
\blocation{Princeton, USA}
(\bye